\documentclass[10pt]{amsart}
\usepackage[usenames]{color}
\usepackage{fullpage,amscd,amssymb,latexsym,url,hyperref}
\usepackage[all,2cell,ps]{xy}
\usepackage{fullpage}
\usepackage{graphicx}
\usepackage{epsfig,epstopdf}
\usepackage{float}
\usepackage{soul}
\usepackage{color}

\theoremstyle{plain}
\newtheorem{theorem}{Theorem}[section]

\newtheorem{lemma}[theorem]{Lemma}
\newtheorem{proposition}[theorem]{Proposition}

\theoremstyle{definition}

\newtheorem{remark}[theorem]{Remark}

\newtheorem*{remark*}{Remark}

\newtheorem*{problem*}{Problem}

\def\aut#1{\mathrm{Aut}(#1)}

\def\B{\mathfrak{B}}
\def\Z{\mathbb Z}
\def\G{\mathcal G}

\def\k{\xi}

\def\h{H}

\def\setof#1#2{\{#1\, : \,#2\}}
\def\t{t}
\def\r{r}

\usepackage{xspace}
\newcommand{\NC}{{(K)}\xspace}

\usepackage{xspace}
\newcommand{\nc}{{(R)}\xspace}

\newcommand{\Hol}{\operatorname{Hol}}
\newcommand{\Fix}{\operatorname{Fix}}

\newcommand{\dm}{\mathcal{D}}

\keywords{Yang-Baxter equation, set-theoretic solution, skew brace, Hopf--Galois extensions}

\subjclass{16T25, 81R50}

\title{Skew braces of size  $p^2q$ I: abelian type}

\begin{document}

\author{E. Acri}
\author{M. Bonatto}

\address[E. Acri]{IMAS--CONICET and Universidad de Buenos Aires, 
Pabell\'on~1, Ciudad Universitaria, 1428, Buenos Aires, Argentina}
\email{eacri@dm.uba.ar}

\address[M. Bonatto]{Dipartimento di matematica e informatica - UNIFE}

\email{marco.bonatto.87@gmail.com}

\maketitle

\begin{abstract}
    This is the first part of a series of two articles. In this paper we enumerate and classify the left braces of size $p^2q$ where $p$ and $q$ are distinct prime numbers by the classification of regular subgroups of the holomorph of the abelian groups of the same order. We also provide the formulas that define the constructed braces.
\end{abstract}

\section{Introduction}

The study of the solutions to the set-theoretical Yang--Baxter equation (YBE) has started with \cite{MR1183474} as a discrete version of the braid equation. We say that for a given set $X$ and a function $r:X\times X\to X\times X$, the pair $(X,r)$ is a \emph{set-theoretical solution to the Yang--Baxter equation} if
\begin{equation}\label{YBE}
(id_X \times r)(r\times id_X)(id_X \times r)=(r\times id_X)(id_X \times r)(r\times id_X)
\end{equation}
holds.

A particular family of solutions is the family of non-degenerate solutions, i.e. solutions obtained as
\begin{equation}\label{non-deg}
r:    X\times X\longrightarrow X\times X,\quad (x,y)\mapsto (\sigma_x(y),\tau_y(x)),
\end{equation}
where $\sigma_x,\tau_x$ are permutations of $X$ for every $x\in X$. A particular case is given by \emph{non-degenerate involutive solutions}, that is $r^2=id_{X\times X}$. Non-degenerate solutions have been studied by several different authors \cite{MR1722951,MR1637256, MR1769723,MR1809284}. Rump introduced the notion of {\it (left) brace}, a binary algebraic structure providing examples of non-degenerate involutive solutions to YBE \cite{MR2132760}. These algebraic structures have been generalized later to \emph{skew (left) braces} by Guarnieri and Vendramin in \cite{MR3647970}, which provide non-involutive non-degenerate solutions.

A \emph{skew (left) brace} is a triple $(B,+,\circ)$ where $(B,+)$ and $(B,\circ)$ are groups (not necessarily abelian) such that
\[
a\circ(b+c)=a\circ b-a+a\circ c
\]
holds for every $a,b,c\in B$. Left braces, or braces for short, are skew braces for which the additive group is abelian. A skew brace $(B,+,\circ)$ is said to be a bi-skew brace if also $(B,\circ,+)$ is a skew brace \cite{biskew}.

The classification of skew braces is strictly related to the problem of finding non-degenerate solutions to \eqref{YBE}. Indeed, given an involutive non-degenerate solution as in \eqref{non-deg}, the permutation group generated by $\setof{\sigma_x}{x\in X}$ has a canonical structure of left brace and in \cite{MR3527540} it has been shown how to construct all the solutions with a given brace structure over the associated permutation group. Later, this approach was extended to skew braces in \cite{MR3835326}. Therefore, in a sense, the study of the solutions to \eqref{YBE} can be reduced to the classification of skew braces, since there is a way to construct all the solutions with a given permutation group endowed with a certain skew brace structure.

Recent progress on the classification problem for (skew) braces are, for instance: the classification of braces with cyclic additive group \cite{MR2298848, Rump}, skew braces of size $pq$ for $p,q$ different primes \cite{skew_pq}, braces of size $p^2q$ for $p,q$ primes with $q>p+1$ \cite{Dietzel}, braces of order $p^2,p^3$ where $p$ is a prime \cite{p_cube}, skew braces of order $p^3$ \cite{NZ} and skew braces of squarefree size \cite{squarefree}.

In this paper we enumerate the left braces of size $p^2q$ where $p$ and $q$ are distinct prime numbers and we also provide explicit formulas for braces. In a separate paper (\cite{second_paper}), we deal with the classification of skew braces of size $p^2q$ with non-abelian additive group.

Skew braces are also related to {\it Hopf-Galois extensions} as explained in \cite{Leandro-Byott}, since they are both in connection with regular subgroups of the holomorph of a group. The results of this paper have already been partially proved in \cite{Caranti} through the connection between skew braces and Hopf-Galois extensions (the authors claim that the missing cases are the subject of a second paper in preparation). 

We employ the same techniques we used in \cite{skew_pq}, which are based on the algorithm for the construction of skew braces with a given additive group developed in \cite{MR3647970}. Indeed, the algorithm allows to obtain all the skew braces with additive group $A$ from regular subgroups of its holomorph $\Hol(A)=A\rtimes\aut{A}$. The isomorphism classes of skew braces are parametrized by the orbits of such subgroups under the action by conjugation of the automorphism group of $A$ in $\Hol(A)$, \cite[Section 4]{MR3647970}.

The paper is organized as follows: in Section \ref{Sec Pre} we introduce all the necessary definitions and we explain the interplay between (skew) braces and regular subgroups and the techniques we are going to use to obtain the main results. In the subsequent sections we deal with the classification according to the relation between the primes $p$ and $q$. The enumeration of the left braces is collected in the following table:

\begin{table}[H]
	\centering
	\begin{tabular}{l|c|c}
		& $\#$ & Section \\
		\hline
		$p=1\pmod q,\ q>2$ & $\frac{q+15}{2}$ & \S\,\ref{section:p=1(q)}\\
		$p=1 \pmod q,\ q=2$ & $8$ & \S\,\ref{section:p=1(q)}\\
		$p=-1\pmod q$ & $5$ & \S\,\ref{section:p=-1(q)}\\
		$q=1\pmod p$, $q\neq 1\pmod{p^2}$ & $p+8$ & \S\,\ref{section:q=1(p)}  \\
		$q=1\pmod {p^2}$ & $2p+8$ & \S\,\ref{section:q=1(p^2)}\\
		$p=2$, $q\neq 1\pmod 4$ & $9$ & \S\,\ref{4p (i)}\\
		$p=2$, $q=1\pmod 4$ & $11$ & \S\,\ref{4p (ii)}\\
		$p$ and $q$ arithmetically independent & $4$ & \S\,\ref{section:alg_ind} \\
	\end{tabular}
	\caption{Enumeration of left braces of size $p^2q$.}
\end{table}



In each section we also collect the enumeration results according to the algebraic properties of the braces in suitable tables.

In particular, our results provide an alternative proof to Conjectures 6.2-6.4 posed in \cite{MR3647970}, already proved in \cite{Smoktunowicz} and \cite{Dietzel}. Moreover, we prove that there are 2 quaternion left braces of size $4p$ for $p$ a prime number, providing a partial answer to \cite[Problem 2.28]{problems}. 

In \cite[Table 5.3]{MR3647970} we have the number of skew braces of order $n\leq 120$ with some exceptions. Due to a computational improvement of the algorithm calculating skew braces, in \cite{skew_trick} we have many tables collecting the number of skew braces of order $n\leq 868$ with some exceptions. We highlight that none of these exceptions are of the form $p^2q$ for different primes $p,q$. All these data are available in \cite{YBE}. This information was of invaluable help when working on the present paper and all of our results agree with those tables.

In \cite[Conjecture 4.1]{skew_trick}, the authors conjecture the number of skew braces of size $p^2q$ based on their tables. As an application of the results obtained in \cite{second_paper} and in the present work, we can give a positive answer to that conjecture, see \cite[\S 7]{second_paper}.

\section{Preliminaries}\label{Sec Pre}

A triple $(B,+,\circ)$ is said to be a \emph{skew (left) brace} if both $(B,+)$ and $(B,\circ)$ are (not necessarily abelian) groups and $$a\circ(b+c)=a\circ b-a+a\circ c$$ holds for every $a,b,c\in B$. Following the standard terminology for Hopf--Galois extensions, if $\chi$ is a group theoretical property, we say that a skew brace $(B,+,\circ)$ is of $\chi$-type if $(B,+)$ has the property $\chi$. Notice that, according to the standard terminology, skew (left) braces of abelian type are just called \emph{(left) braces}.

A {\it bi-skew brace} is a skew brace $(B,+,\circ)$ such that $(B,\circ,+)$ is also a skew brace (see \cite{biskew}). Equivalently
\begin{equation*}\label{eq for biskew}
	a+(b\circ c)=(a+ b)\circ a'\circ (a+ c) 
\end{equation*}
holds for every $a,b,c\in B$, where $a'$ denotes the inverse of $a$ in $(B,\circ)$.

Given a skew brace $(B,+,\circ)$, the mapping
\begin{equation*}\label{axiom}
	\lambda:(B,\circ)\to \aut{B,+}, \quad \lambda_a(b)=-a+a\circ b
\end{equation*}
is a homomorphism of groups and we denote its kernel as usual by $\ker \lambda$.

Let $(B,+,\circ)$ be a skew brace. A subgroup $I$ of $(B,+)$ is a {\it left ideal} of $B$ if $\lambda_a(I)\leq I$ for every $a\in B$. Every left ideal is a subgroup of $(B,\circ)$. If a left ideal $I$ is a normal subgroup of both $(B,+)$ and $(B,\circ)$ then $I$ is said to be an {\it ideal}. The set $$\Fix(B)=\{ a \in B : \lambda_x(a)=a \text{ for all $x\in B$} \}$$ is clearly a left ideal.

\begin{lemma}\label{Sylows are ideals} 
	Let p,q be primes with $p>2$ and let $(B,+,\circ)$ be a finite skew brace of size $p^2q$. 
	\begin{itemize}
		\item[(i)] If $p=\pm 1\pmod q$ then the unique $p$-Sylow subgroup of $(B,+)$ is an ideal. In particular, the $p$-Sylow subgroup of $(B,+)$ and $(B,\circ)$ are isomorphic.
		\item[(ii)] If $q= 1\pmod p$ then the unique $q$-Sylow subgroup of $(B,+)$ is an ideal.
	\end{itemize}
\end{lemma}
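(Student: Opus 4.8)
The plan is to prove both parts by one common three-step scheme, applied to the relevant Sylow subgroup $S$ of $(B,+)$ — so $S$ is the $p$-Sylow of order $p^2$ in part (i) and the $q$-Sylow of order $q$ in part (ii). First I would show that $S$ is the unique, and hence characteristic, Sylow subgroup of $(B,+)$; then deduce that $S$ is a left ideal; and finally show that $S$, now regarded as a subgroup of $(B,\circ)$, is again the unique Sylow subgroup of its order there, so that it is normal in $(B,\circ)$ as well. Since a left ideal that is normal in both $(B,+)$ and $(B,\circ)$ is by definition an ideal, this yields the claim.

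For the first step I would apply Sylow's theorems in $(B,+)$. In part (i) the number $n_p$ of $p$-Sylow subgroups satisfies $n_p \mid q$ and $n_p \equiv 1 \pmod p$, so $n_p \in \{1,q\}$, and $n_p = q$ would force $q \equiv 1 \pmod p$. In part (ii) the number $n_q$ of $q$-Sylow subgroups satisfies $n_q \mid p^2$ and $n_q \equiv 1 \pmod q$, so $n_q \in \{1,p,p^2\}$, and $n_q \neq 1$ would force $p \equiv \pm 1 \pmod q$. A short numerical check then shows that the standing congruence hypothesis excludes these nontrivial possibilities: in part (i), $p \equiv 1 \pmod q$ gives $q \le p-1$ whereas $q \equiv 1 \pmod p$ gives $q \ge p+1$, while $p \equiv -1 \pmod q$ together with $q \equiv 1 \pmod p$ forces $q = p+1$ and hence $(p,q)=(2,3)$; the situation in part (ii) is symmetric. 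Thus, apart from the single exceptional pair $(p,q)=(2,3)$ — which in the cases relevant here has abelian additive group and can be dealt with directly — the subgroup $S$ is unique, and therefore characteristic, in $(B,+)$.

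The second step is immediate: each $\lambda_a$ belongs to $\aut{B,+}$ and hence fixes the characteristic subgroup $S$ setwise, so $\lambda_a(S)=S$ for every $a\in B$, which is exactly the assertion that $S$ is a left ideal. By the observation recalled before the lemma, $S$ is then a subgroup of $(B,\circ)$, and since $|S|$ is the full $p$-part (respectively $q$-part) of $|B|$, it is a Sylow subgroup of $(B,\circ)$ too. The key point I would stress is that the order $|B|=p^2q$ is shared by both groups, so the very same counting as in the first step — now read inside $(B,\circ)$ — shows that $S$ is the unique Sylow subgroup of its order in $(B,\circ)$, hence normal there. Being a left ideal that is normal for both operations, $S$ is an ideal.

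For the final assertion of part (i) I would argue that, $S$ being an ideal, $(S,+,\circ)$ is a sub-skew-brace of order $p^2$; both $(S,+)$ and $(S,\circ)$ are then groups of order $p^2$, hence abelian, and the isomorphism $(S,+)\cong(S,\circ)$ follows from the classification of skew braces of order $p^2$ (or can be verified directly). I expect the main obstacle to be precisely the transfer carried out in the third step: one must be certain that the hypothesis bounding the additive Sylow count also bounds the multiplicative one, and that the degenerate pair $(p,q)=(2,3)$, where the two congruence regimes collide and the additive Sylow need not be normal, is isolated and handled on its own.
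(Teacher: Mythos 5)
Your proposal follows essentially the same route as the paper's proof: unique Sylow subgroup of $(B,+)$, hence characteristic, hence a left ideal; then, since it is a subgroup of full prime-power order in $(B,\circ)$ and $(B,\circ)$ also has order $p^2q$, the same Sylow count read inside $(B,\circ)$ makes it the unique, hence normal, Sylow subgroup there, so it is an ideal. Your execution is in fact more careful than the paper's, which simply asserts that \emph{every} group of order $p^2q$ with $p\equiv\pm1\pmod q$ (resp.\ $q\equiv 1\pmod p$) has a unique $p$-Sylow (resp.\ $q$-Sylow) subgroup; your counting correctly isolates the one pair of primes for which that assertion fails, namely $(p,q)=(2,3)$ (the dihedral and dicyclic groups of order $12$ have three $2$-Sylow subgroups, and $A_4$ has four $3$-Sylow subgroups).

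The genuine gap is your parenthetical claim that the exceptional pair $(2,3)$ ``can be dealt with directly'': it cannot, because the lemma is \emph{false} there, even for braces of abelian type. Take $B=\Z_{12}$ with $x\circ y=x+y$ for $x$ even and $x\circ y=x+5y$ for $x$ odd; this is the brace associated to the regular subgroup $\setof{(x,\varphi_x)}{x\in\Z_{12}}$ of $\Hol(\Z_{12})$, where $\varphi_x$ is the identity for $x$ even and multiplication by $5$ for $x$ odd, and one checks that $(B,\circ)\cong\Z_3\rtimes_{-1}\Z_4$ (the element $1$ has $\circ$-order $4$, the element $4$ has $\circ$-order $3$, and $1\circ 4\circ 7=8$ is the $\circ$-inverse of $4$). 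Here $p=2\equiv -1\pmod 3$, and the unique $2$-Sylow subgroup $P=\{0,3,6,9\}$ of $(B,+)$ is a left ideal and a $2$-Sylow subgroup of $(B,\circ)$, but it is not normal there: $4\circ 3\circ 8=4\circ 7=11\notin P$ (note $8$ is the $\circ$-inverse of $4$). Similarly for part (ii): the regular subgroup $\langle \sigma,\tau,\epsilon\beta\rangle\leq\Hol(\Z_2^2\times\Z_3)$, with $\beta\in GL_2(2)$ of order $3$, yields a skew brace with $(B,+)=\Z_2^2\times\Z_3$ and $(B,\circ)\cong A_4$, whose unique additive $3$-Sylow subgroup is a left ideal but cannot be an ideal, since $A_4$ has no normal subgroup of order $3$. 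So the exceptional case must be \emph{excluded}, not handled: the correct statement is the lemma with $(p,q)\neq(2,3)$, which is harmless for this paper since every application assumes $p\equiv1\pmod q$, or $p,q$ odd, or $p>2$, or $q>3$. The same criticism applies, more severely, to the paper's own proof, which overlooks this case entirely.
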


\begin{proof}
	The groups of size $p^2q$ with $p=\pm 1\pmod q$ has a unique $p$-Sylow subgroup, therefore it is characteristic. If we denote by $P$ the unique $p$-Sylow subgroup of $(B,+)$, we have that for every $x\in B$, $\lambda_x(P)=P$. Since it has order $p^2$, $P$ is the unique $p$-Sylow subgroup of $(B,\circ)$. Then it is an ideal. In particular $(P,+,\circ)$ is a skew brace of size $p^2$ and so according to the classification provided in \cite{p_cube} we have $(P,+)\cong (P,\circ)$. The same argument applies for (ii).
\end{proof}

The following lemma will be used to compute the formulas of the multiplicative operation of skew braces through all the paper.

\begin{lemma}\label{ker fix}
	Let $B$ be a skew brace. 
	\begin{itemize}
		\item[(i)]  If $\lambda_b(b)=b$ then 
		$$\underbrace{b\circ b\circ \ldots \circ b}_n=nb \quad \text{ and } \quad \lambda_{nb}=\lambda_b^n$$
		for every $n\in \mathbb{N}$ where $nb=\underbrace{b+ b+ \ldots + b}_n$.
		
		\item[(ii)]  If $a,c\in \ker{\lambda}$ and $b,d\in \Fix(B)$ then
		$$(a+b)\circ (c+d)=a+b+\lambda_b(c)+d.$$

	\end{itemize}
\end{lemma}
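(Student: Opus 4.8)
The plan is to base everything on the identity $a\circ b = a + \lambda_a(b)$, which is just a rewriting of the definition $\lambda_a(b)=-a+a\circ b$, together with the fact that $\lambda\colon (B,\circ)\to\aut{B,+}$ is a group homomorphism, so that $\lambda_{x\circ y}=\lambda_x\lambda_y$ and each $\lambda_x$ is an additive automorphism. These two facts are the only engine needed for both parts.

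For part (i), I would argue by a simultaneous induction on $n$, proving the two claims $b^{\circ n}=nb$ and $\lambda_{nb}=\lambda_b^n$ together, where $b^{\circ n}$ denotes the $n$-fold $\circ$-power of $b$. First note that the hypothesis $\lambda_b(b)=b$ immediately gives $\lambda_b^k(b)=b$ for every $k$. The base case $n=1$ is trivial. For the inductive step I compute
\[
b^{\circ(n+1)}=b^{\circ n}\circ b=(nb)\circ b=nb+\lambda_{nb}(b)=nb+\lambda_b^n(b)=nb+b=(n+1)b,
\]
using the basic identity and then the two inductive hypotheses $b^{\circ n}=nb$, $\lambda_{nb}=\lambda_b^n$ together with $\lambda_b^n(b)=b$. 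For the $\lambda$-statement I would use the homomorphism property:
\[
\lambda_{(n+1)b}=\lambda_{b^{\circ(n+1)}}=\lambda_{b^{\circ n}\circ b}=\lambda_{b^{\circ n}}\lambda_b=\lambda_{nb}\lambda_b=\lambda_b^n\lambda_b=\lambda_b^{n+1},
\]
where the first equality uses $b^{\circ(n+1)}=(n+1)b$ just established. The point worth flagging is that the two claims must be carried together, since evaluating the $\circ$-power at stage $n+1$ requires knowing $\lambda_{nb}=\lambda_b^n$ from stage $n$.

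For part (ii), the key observation is that an element $a\in\ker\lambda$ satisfies $a\circ y=a+\lambda_a(y)=a+y$ for every $y$; in particular $a+b=a\circ b$ and $c+d=c\circ d$. I would therefore rewrite the left-hand side purely multiplicatively and collapse it using associativity of $\circ$ and the same observation:
\[
(a+b)\circ(c+d)=(a\circ b)\circ(c\circ d)=a\circ\bigl(b\circ(c\circ d)\bigr)=a+b\circ(c+d),
\]
where the last equality uses $c\circ d=c+d$ and $a\circ x=a+x$. It then remains to evaluate $b\circ(c+d)=b+\lambda_b(c+d)=b+\lambda_b(c)+\lambda_b(d)$ by additivity of $\lambda_b$, and $d\in\Fix(B)$ gives $\lambda_b(d)=d$; combining these yields $(a+b)\circ(c+d)=a+b+\lambda_b(c)+d$, as claimed. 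Neither part presents a genuine obstacle once the identity $a\circ b=a+\lambda_a(b)$ is in hand; the only things to be careful about are the bookkeeping of the simultaneous induction in (i) and keeping straight which elements lie in $\ker\lambda$ (collapsing $\circ$ to $+$) versus $\Fix(B)$ (fixed by every $\lambda_x$) in (ii).
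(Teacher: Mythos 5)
Your proposal is correct and follows essentially the same route as the paper: part (i) is the same simultaneous induction built on $b\circ b=b+\lambda_b(b)=2b$ and $\lambda_{b\circ b}=\lambda_b^2$, and part (ii) rests on the same key facts the paper uses, namely that $a\in\ker\lambda$ collapses $a\circ x$ to $a+x$ (so $a+b=a\circ b$), that $\lambda_b$ is additive, and that $\lambda_b(d)=d$ for $d\in\Fix(B)$. The only cosmetic difference is that in (ii) you rewrite everything multiplicatively and invoke associativity of $\circ$, whereas the paper applies the homomorphism property directly via $\lambda_{a+b}=\lambda_{a\circ b}=\lambda_b$; the computations are interchangeable.
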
{}

\begin{proof}{}
	(i) The statement follows by induction on $n$, since if $\lambda_b(b)=b$ then $b\circ b=b+\lambda_b(b)=b+b=2b$ and $\lambda_{2b}=\lambda_{b\circ b}=\lambda_b^2$.

	(ii) Let $a,c\in \ker{\lambda}$ and $b,d\in \Fix(B)$. Then
	$$(a+b)\circ (c+d)=a+b+\lambda_{a\circ b}(c+d)=a+b+\lambda_b(c+d)=a+b+\lambda_b(c)+d$$ where we are using that $\lambda_{a+b}=\lambda_{a\circ b}=\lambda_b$ since $a\in \ker{\lambda}$.
\end{proof}

\subsection{Skew braces and regular subgroups}

Recall that a group of permutations $G$ acting on a set $X$ is said to be \emph{regular} if, for any $x,y\in X$, there exist a unique $g\in G$ such that $g(x)=y$.  The holomorph $\Hol(A) =A\rtimes \aut{A}$ of a group $A$ can be embedded in the group of permutations on $A$. In fact, an element $(a,f)\in\Hol(A)$ acts on $A$ by 
\begin{equation}\label{canonical action}
(a,f)\cdot x=a+f(x)    
\end{equation}
 for all $x\in A$ where the operation on $A$ is written additively. So, a subgroup $G$ of $\Hol (A)$ is said to be \emph{regular} if the image of $G$ in the group of permutations on $A$ is regular. 

In \cite{MR3647970}, it has been shown that given a group $(A,+)$ there exists a bijective correspondence between isomorphism classes of skew braces $(A,+,\circ)$ and orbits of regular subgroups of $\Hol(A,+)$ under conjugation by $\aut{A,+}$ identified with the subgroup $1\times \aut{A}\leq \Hol(A,+)$. 

Let 
$$\pi_1:\Hol(A)\longrightarrow A, \quad (a,f)\mapsto a \quad\text{and}\quad \pi_2:\Hol(A)\longrightarrow \aut{A},\quad (a,f)\mapsto f$$ be the canonical surjections.

\begin{theorem}\cite[Theorem 4.2, Proposition 4.3]{MR3647970}\label{thm:skew_holomorph}
	Let $(A,+)$ be a group. If $\circ$ is an operation such that $(A,+,\circ)$ is a skew brace, then $\{ (a,\lambda_a): a\in A \}$ is a regular subgroup of $\Hol(A,+)$. Conversely, if $G$ is a regular subgroup of $\Hol(A,+)$, then $A$ is a skew brace with $$a\circ b=a+f(b)$$ where $(\pi_1|_G)^{-1}(a)=(a,f)\in G$ and $(A,\circ)\cong G$.
	
	Moreover, isomorphism classes of skew braces over $A$ are in bijective correspondence with the orbits of regular subgroups of $\Hol(A)$ under the action of $\aut{A}$ by conjugation.
\end{theorem}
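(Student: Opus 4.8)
The plan is to prove the two assertions separately: first the pointwise correspondence between skew-brace operations $\circ$ on $(A,+)$ and regular subgroups of $\Hol(A,+)$, and then the orbit statement identifying isomorphism classes with $\aut{A}$-conjugacy classes of such subgroups. Throughout I would exploit the semidirect-product multiplication $(a,f)(b,g)=(a+f(b),\,f\circ g)$ in $\Hol(A,+)=A\rtimes\aut{A}$ together with the basic fact that a subgroup $G\le\Hol(A,+)$ is regular if and only if the restriction $\pi_1|_G\colon G\to A$, $(a,f)\mapsto a$, is a bijection; this holds because $(a,f)\cdot 0=a+f(0)=a$, so $\pi_1|_G$ is exactly the orbit map at the additive identity $0$.

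For the forward direction, given a skew brace $(A,+,\circ)$ I would consider $\Phi\colon(A,\circ)\to\Hol(A,+)$, $\Phi(a)=(a,\lambda_a)$, and check that it is a group homomorphism: since $a+\lambda_a(b)=a+(-a+a\circ b)=a\circ b$ and $\lambda$ is a homomorphism (already recorded in the preliminaries), one gets $\Phi(a)\Phi(b)=(a+\lambda_a(b),\lambda_a\lambda_b)=(a\circ b,\lambda_{a\circ b})=\Phi(a\circ b)$. Its image $G=\{(a,\lambda_a):a\in A\}$ is then a subgroup whose first-coordinate projection is the identity on $A$, hence regular by the criterion above. For the converse, given a regular $G$ I would use $\pi_1|_G$ to transport the group law: writing $(a,f)\in G$ for the unique element with first coordinate $a$, I set $a\circ b=a+f(b)$, so that $\pi_1|_G$ becomes an isomorphism $G\cong(A,\circ)$ and $\lambda_a:=f$. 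The brace axiom is then a one-line verification, using that $f$ is additive and that $-a+a=0$: $a\circ(b+c)=a+f(b)+f(c)=(a+f(b))-a+(a+f(c))=a\circ b-a+a\circ c$.

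For the orbit statement I would first compute the conjugation action of $\aut{A,+}\cong 1\times\aut{A}$: for $\phi\in\aut{A,+}$, identified with $(0,\phi)$, one finds $(0,\phi)(a,f)(0,\phi)^{-1}=(\phi(a),\,\phi f\phi^{-1})$. Given two skew braces $\circ_1,\circ_2$ on $(A,+)$ with associated regular subgroups $G_1,G_2$ and left-multiplication maps $\lambda^1,\lambda^2$, the task is to show that an isomorphism of skew braces is precisely an additive automorphism $\phi$ with $(0,\phi)G_1(0,\phi)^{-1}=G_2$. Unwinding the conjugation formula, this equality is equivalent to $\lambda^2_{\phi(a)}=\phi\lambda^1_a\phi^{-1}$ for all $a$, and I would derive it from multiplicativity of $\phi$ by the computation $\lambda^2_{\phi(a)}(\phi(c))=-\phi(a)+\phi(a)\circ_2\phi(c)=\phi(-a)+\phi(a\circ_1 c)=\phi(\lambda^1_a(c))$, reading the chain backwards to obtain the converse.

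The routine half is the pointwise correspondence; the step that needs the most care is the orbit statement, where one must keep the non-abelian additive group straight (order of terms, the identity $-\phi(a)=\phi(-a)$) and confirm that the natural notion of skew-brace isomorphism fixing the identification of the additive groups coincides exactly with the conjugacy relation, in both directions. I expect the main obstacle to be packaging the equivalence ``$\phi$ is a $\circ$-homomorphism $\iff$ $\lambda^2_{\phi(a)}=\phi\lambda^1_a\phi^{-1}$'' cleanly, since one implication uses multiplicativity to produce the intertwining relation while the other recovers multiplicativity from it.
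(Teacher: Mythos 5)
Your proposal is correct. The paper itself gives no proof of this statement---it is quoted directly from \cite{MR3647970}---and your argument is a faithful reconstruction of the standard proof given there: the homomorphism $a\mapsto(a,\lambda_a)$ whose image projects bijectively onto $A$, transport of the group structure of $G$ along $\pi_1|_G$ to verify the brace axiom, and the conjugation formula $(0,\phi)(a,f)(0,\phi)^{-1}=(\phi(a),\phi f\phi^{-1})$ showing that skew brace isomorphisms correspond exactly to conjugacy of regular subgroups under $1\times\aut{A}$.
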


We can construct all skew braces with additive group isomorphic to $A$ by finding representatives of the orbits of regular subgroups under the action by conjugation of $\aut{A}$ on $\Hol(A)$ using Theorem \ref{thm:skew_holomorph}. 

The connection between a regular subgroup and the multiplicative group structure of the associated skew brace is the following.

\begin{remark}\label{remark for lambdas}
	Let $(A,+)$ be a group and $G$ a regular subgroup of $\Hol(A)$. According to Theorem \ref{thm:skew_holomorph}, $(A,+,\circ)$ where 
	\[
	a\circ b=a+\pi_2((\pi_1|_G)^{-1}(a))(b)
	\]
	for every $a,b\in A$ is a skew brace. In other words, $\lambda_a=\pi_2((\pi_1|_G)^{-1}(a))$ and 
	so if $G$ is finite $|\ker{\lambda}|=\frac{|G|}{|\pi_2(G)|}$.

\end{remark}

\subsection{Regular subgroups}

In this section we describe a method to search for regular subgroups of the holomorph of a given finite group $A$ and we fix some notation and terminology. Such method is inspired by \cite[Section 2.2]{NZpaper}. As above, let 
$$\pi_1:\Hol(A)\longrightarrow A, \quad (a,f)\mapsto a \quad\text{and}\quad \pi_2:\Hol(A)\longrightarrow \aut{A},\quad (a,f)\mapsto f$$ be the canonical surjections.
The map $\pi_1$ controls the regularity of a subgroup $G\leq \Hol(A)$ with $|G|=|A|$. 
\begin{lemma}\label{regularity}
	Let $A$ be a finite group and $G\leq \Hol(A)$. The following are equivalent:
	\begin{enumerate}
		\item[(i)] $G$ is regular.
		\item[(ii)] $|A|=|G|$ and $\pi_1(G)=A$.
		\item[(iii)] $|A|=|G|$ and $G\cap \left(1\times \aut{A}\right)=1$.
	\end{enumerate}
\end{lemma}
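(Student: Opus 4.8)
The plan is to translate all three conditions into statements about the action of $G$ on $A$ and then tie them together with the orbit--stabilizer theorem. First I would pin down the stabilizer of the neutral element $0\in A$ inside the full holomorph: since every $f\in\aut{A}$ fixes $0$, the formula $(a,f)\cdot 0 = a + f(0) = a$ shows that $(a,f)$ fixes $0$ precisely when $a=0$. Hence $\operatorname{Stab}_{\Hol(A)}(0) = 1\times\aut{A}$, and consequently the stabilizer of $0$ inside $G$ is exactly $G\cap(1\times\aut{A})$. The same computation identifies the orbit of $0$ under $G$, namely $G\cdot 0 = \{(a,f)\cdot 0 : (a,f)\in G\} = \pi_1(G)$.

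With these two identifications in hand, the orbit--stabilizer theorem applied to the finite group $G$ yields the single numerical identity $|G| = |\pi_1(G)|\cdot|G\cap(1\times\aut{A})|$, which does most of the work. I would then unwind the definition of regularity given in the statement: for every pair $x,y$ there is a \emph{unique} $g\in G$ with $g(x)=y$, where existence of such a $g$ is transitivity (the orbit of $0$ is all of $A$) and uniqueness is freeness (the stabilizer of $0$ is trivial). By the identifications above this is exactly $\pi_1(G)=A$ together with $G\cap(1\times\aut{A})=1$, and in that case the orbit--stabilizer identity forces $|G|=|A|\cdot 1=|A|$. This settles the implication from regularity to each of the other two conditions.

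For the reverse directions I would use the numerical identity to pass between the two remaining conditions under the common hypothesis $|A|=|G|$. If $|A|=|G|$ and $\pi_1(G)=A$, then $|\pi_1(G)|=|G|$, so the identity forces $|G\cap(1\times\aut{A})|=1$; symmetrically, if $|A|=|G|$ and $G\cap(1\times\aut{A})=1$, then $|\pi_1(G)|=|G|=|A|$, whence $\pi_1(G)=A$ since $\pi_1(G)\subseteq A$. In either case the orbit of $0$ is all of $A$ (transitivity) and the stabilizer of $0$ is trivial (freeness), so $G$ acts regularly.

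I do not expect a genuine obstacle here: the content is entirely the orbit--stabilizer bookkeeping together with the observation that the point stabilizer of $0$ in $\Hol(A)$ is precisely the complement $1\times\aut{A}$. The only point requiring slight care is that regularity means transitive \emph{and} free, so one must verify that, once transitivity holds, the cardinality constraint $|A|=|G|$ is equivalent to freeness; this is exactly what the orbit--stabilizer identity delivers, and it is the one place where finiteness of $A$ is used.
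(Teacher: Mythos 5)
Your proof is correct, but there is nothing in the paper to compare it against: the authors state Lemma \ref{regularity} without proof, treating it as a standard fact about regular subgroups of the holomorph (they only remark afterwards that in practice they either invoke the lemma or check directly that the stabilizer of the identity is trivial). Your argument is precisely the proof one would write: identify $\operatorname{Stab}_{\Hol(A)}(0)=1\times\aut{A}$ and $G\cdot 0=\pi_1(G)$, then let the orbit--stabilizer identity $|G|=|\pi_1(G)|\cdot|G\cap(1\times\aut{A})|$ do all the bookkeeping. The only point you should make explicit is at the very end: regularity as defined in the paper demands a \emph{unique} $g$ with $g(x)=y$ for \emph{all} pairs $x,y\in A$, whereas your verification is phrased only at the base point $0$. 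This is not a real obstacle: once the action is transitive and $|G|=|A|$, the same orbit--stabilizer identity applied at an arbitrary point $x$ gives $|\operatorname{Stab}_G(x)|=|G|/|G\cdot x|=|G|/|A|=1$, so every point stabilizer is trivial (equivalently, stabilizers of points in one orbit are conjugate), and uniqueness for every pair follows. With that sentence added, your proof is complete and is exactly the argument the paper leaves implicit.
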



\begin{proof}
Let $\theta$ be the canonical action defined in \eqref{canonical action}. Since the action $\theta$ is faithful and $A$ is finite we have that the following are equivalent:
	\begin{enumerate}
		\item[(i)] $G$ is regular.
		\item[(ii)] $|A|=|G|$ and the orbit of $1$ under $\theta$ is $A$.
		\item[(iii)] $|A|=|G|$ and the stabilizer of $1$ under the action of $G$ is trivial.
	\end{enumerate}

The stabilizer of $1$ under the action $\theta$ is $G\cap (1\times \aut{A})$ and the orbit of $1$ is $\pi_1(G)$. Thus, the claim follows.
\end{proof}

In order to show that a subgroup $G$ of the holomorph of $A$ is regular we are using Lemma \ref{regularity} or we are checking directly that the stabilizer of the identity is trivial. We will do it explicitly in Proposition \ref{cyclic no pq} and Theorem \ref{cor_formula_abelian_pp} and in the sequel we will omit the computations.

We are going to apply the following two-step strategy to compute a set of representative of conjugacy classes of regular subgroups of $\Hol(A)$.

The first step is to provide a list of non-conjugate regular subgroups of the holomorph, according to some properties invariant under conjugation by elements of the subgroup $1\times \aut{A}$ in $\Hol(A)$. Let $G$ be a subgroup of $\Hol(A)$, then the conjugacy class of $\pi_2(G)$ in $\aut{G}$ and so in particular $|\pi_2(G)|$ is an invariant. The map $\pi_2$ is a group homomorphism and 
the size of the image of $\pi_2$ divides both $|(A,+)|$ and $|\aut{A,+}|$, so it divides their greatest common divisor. If the image under $\pi_2$ is trivial, the associated skew brace is the unique trivial skew brace over $A$.

If two groups $G$ and $H$ have the same image under $\pi_2$ and they are conjugate by $h$, then $h$ normalizes their image under $\pi_2$ and $\ker{\pi_2|_G}=h (\ker{\pi_2|_H}) h^{-1}$. Then the kernel with respect to $\pi_2$ can be chosen up to the action of the normalizer of the image of $\pi_2$. Therefore, for the first step we need to:

\begin{itemize}
	\item Find the greatest common divisor between $|A|$ and $|\aut{A}|$.
	\item For each $k$ dividing such greatest common divisor, find the conjugacy classes of subgroups of size $k$ of $\aut{A,+}$. In case $\aut{A}$ is abelian, we have to compute all its subgroups of the desired size.
	\item The kernel of $\pi_2$ is a subgroup of $\Hol(A)$ and it is contained in $A\times 1$. So we need to find a set of representatives of the orbits of subgroups of size $\frac{|A|}{k}$ of $A$, under the action of the normalizer of the image of $\pi_2$ on $A$. 
\end{itemize}

In order to show that two groups with the same image under $\pi_2$ and the same kernel are not conjugate, we need to introduce some \emph{ad hoc} arguments, which depends on the particular structure of $A$ and its automorphisms. The isomorphism class of the groups in the list of representatives of conjugacy classes can be easily deduced using the list of groups of size $p^2q$ and so we omit a proof of such isomorphisms.

The second step is to show that any regular subgroup is conjugate to one in the list computed in the first step. In particular, we need to describe the subgroups of $\Hol(A)$ according to the possible values of the invariants mentioned above. 

Let $\setof{\alpha_i}{1\leq i\leq n}$ be a set of generators of $\pi_2(G)$ and $\setof{k_j}{1\leq j\leq m}$ be a set of generators of the kernel of $\pi_2|_G$. The regular subgroup $G$ can be presented as follows:
$$G=\langle k_1,\ldots, k_m, u_1\alpha_1,\ldots, u_n\alpha_n \rangle,$$
for some $u_i\in A$. According to Lemma \ref{regularity}, $u_i\neq 1$, since $G$ is regular. We will call this presentation the {\it standard presentation} of $G$. Note that we can choose any representative of the coset of $u_i$ with respect to the kernel, without changing the group $G$ (this is equivalent to multiply on the left the elements $u_i\alpha_i$ by elements of the kernel). Moreover, any generator can be multiplied by any element in $G$, without changing the group. We will use these operations to make computations easier without further explanation.

The group $G$ has to satisfy the following necessary conditions, that will provide constrains over the choice of the elements $u_i$: 
\begin{itemize}
	
	\item[\NC] The kernel of $\pi_2|_G$ is normal in $G$.
	
	\item[\nc] The generators $\setof{u_i\alpha_i}{1\leq i\leq n}$ satisfy the same relations as $\setof{\alpha_i}{1\leq i\leq n}$ modulo $\ker{\pi_2|_G}$ (e.g. if $\alpha_i^n=1$ then $(u_i\alpha_i)^n\in \ker{\pi_2|_G}$).
	
\end{itemize}

Given one of such groups, we can conjugate by the elements of the normalizer of $\pi_2(G)$ in $\aut{A}$ that stabilizes the kernel of $\pi_2|_G$ in order to show that $G$ is conjugate to one of the chosen representatives.

\subsection{Notation}\label{groups and auto}
 
In the paper the symbols $p$ and $q$ denote prime numbers and we always assume that they are distinct. We denote by $\Z_n^\times$ the group of units of the multiplicative group $\Z_n$. Through all the paper we are using the following presentations for the two abelian groups of size $p^2q$:

\begin{itemize}
	
	\item[(i)] $\mathbb{Z}_{p^2q}=\langle \sigma, \tau\ |\ \sigma^{p^2}=\tau^q=1,\ [\sigma,\tau]=1 \rangle$,
	\item[(ii)] $\mathbb{Z}_{p}^2\times \mathbb{Z}_q=\langle \sigma, \tau, \epsilon\ |\ \sigma^p=\tau^p=\epsilon^q=1,\ [\sigma,\tau]=[\sigma,\epsilon]=[\tau,\epsilon]=1 \rangle$.
\end{itemize}
The automorphism group of $\Z_{p^2q}$ is isomorphic to $\mathbb{Z}_{p^2}^\times\times \mathbb{Z}_q^\times$ and it has size $p(p-1)(q-1)$. We denote by $\varphi_{i,j}$ the automorphism defined by $$\sigma\mapsto \sigma^i \qquad\text{and}\qquad \tau\mapsto \tau^j$$ where $i\in \mathbb{Z}_{p^2}^\times$ and $j\in \mathbb{Z}_{q}^\times$. 

The automorphism group of $\Z_p^2\times \Z_q$ is isomorphic to $GL_{2}(p)\times \Z_q^\times$ and it has size $p(p-1)^2(p+1)(q-1)$. We can identify the automorphisms of $\Z_p^2\times \Z_q$ as $M\alpha$ where $M$ is an invertible matrix in the basis $\sigma,\tau$ and $\alpha\in \mathbb{Z}_q^\times$.

Note that, in both cases, the Sylow subgroups are characteristic and therefore also normal in the holomorph.

If $C$ is a cyclic group acting on a group $G$ by $\rho:C\longrightarrow \aut{G}$ and $\rho(1)=f$, then $G\rtimes_{f} C$ denotes the semidirect product determined by the action $\rho$ (which is uniquely determined by the image of the generators of $C$ under $\rho$).

\section{Left braces of size $p^2q$ with $p=1\pmod{q}$}\label{section:p=1(q)}



In this section we assume that $p$ and $q$ are prime numbers such that $p=1\pmod{q}$, including the case $q=2$ unless stated otherwise. Crespo in \cite[\S 5.1 and \S 5.3]{Crespo_2p2} classified the left braces of order $2p^2$. We will obtain her results as a particular case. We denote by $\B$ the subset of $\Z_q$ that contains $0,1,-1$ and one out of $k$ and $k^{-1}$ for $k\neq 0,1,-1$. For $q=2$ we define $\B=\{0,1\}$. Notice that for $q>2$, we have $|\B|=\frac{q+3}{2}$. We denote by $g$ a fixed element of order $q$ in $\Z_{p}^\times$ and by $\t$ a fixed element of order $q$ in $\Z_{p^2}^\times$. 

According to \cite[Proposition 21.17]{EnumerationGroups}, the non-abelian groups of size $p^2q$ are the following:
\begin{itemize}
	\item[(i)] $\mathbb{Z}_{p^2}\rtimes_{\t} \mathbb{Z}_q\cong\langle \sigma, \tau\ |\ \sigma^{p^2}=\tau^q=1,\ \tau\sigma\tau^{-1}=\sigma^{\t} \rangle$.
	\item[(ii)] $\G_{k}=\langle \sigma,\tau,\epsilon\,|\, \sigma^p=\tau^p=\epsilon^q=1,\, \epsilon \sigma \epsilon^{-1}=\sigma^g,\, \epsilon \tau \epsilon^{-1}=\tau^{g^k}\rangle \cong\mathbb{Z}_{p}^2 \rtimes_{\dm_k} \mathbb{Z}_q$, where $\dm_k$ is the diagonal matrix with diagonal entries $g, g^k$ for $k\in\B$.
\end{itemize}
Notice that for $q=2$, the group in (i) is isomorphic to the dihedral group of order $2p^2$ and we can assume $t=-1$. The groups in item (ii) are parametrized by the set $\B$ since $\G_k$ and $\G_{k^{-1}}$ are isomorphic if $k\neq 0,1,-1$. In particular, there are $\frac{q+3}{2}$ isomorphism classes of such groups if $q>2$ and $2$ isomorphism classes if $q=2$. We will take into account such isomorphism for the enumeration of skew braces according to the isomorphism class of their additive and multiplicative groups.

The following table collects the enumeration of left braces according to their additive and multiplicative groups.

\begin{table}[H]
	\centering
	\begin{tabular}{c|c|c|c|c|c}
		$+ \backslash \circ$ & $\Z_{p^2q}$ & $\Z_{p^2}\rtimes_{\t}\Z_q$ &$\Z_p^2\times\Z_q$ &   $\G_k$, $k\in \B\setminus\{2\}$ & $\G_{2}$\\
		\hline
		$\Z_{p^2q}$ & $2$ & $1$ & - &-   &- \\
		$\Z_p^2\times\Z_q$ &- &- & $2$   & $1$ & $2$	
	\end{tabular}
	\caption{Enumeration of left braces of size $p^2q$ with $p=1\pmod{q}$.}
\end{table}
Note that for $q=3$ we have that $\B=\{0,1,-1\}$, $2=-1$ and so in such case we have $2$ skew braces of $\Z_p^2\times \Z_q$-type with multiplicative group isomorphic to $\G_{-1}$. For the case $q=2$, we have $2=0$ and then $\B\setminus \{2\}=\{1\}$. 

\subsection{Left braces of cyclic type}\label{subsection:p=1(q)_cyclic}

In this section we will denote by  $A$ the cyclic group $\Z_{p^2q}$ and we employ the notation as in Section \ref{groups and auto} for the generators of the group and its automorphisms.
If $G$ is a regular subgroup of $\Hol(A)$, then $|\pi_2(G)|$ belongs to $\{1,q,p,pq\}$, since it divides $|\aut{A}|=p(p-1)(q-1)$ and $p^2q$. First, we see that there are not regular subgroups with $|\pi_2(G)|=pq$.

\begin{proposition}\label{cyclic no pq}
	Let $G$ be a regular subgroup of $\Hol(A)$. Then $|\pi_2(G)|\neq pq$.
\end{proposition}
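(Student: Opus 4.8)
The plan is to derive a contradiction from the assumption that $|\pi_2(G)| = pq$. Since $A = \Z_{p^2q}$ is abelian with $\aut{A} \cong \Z_{p^2}^\times \times \Z_q^\times$ also abelian, I can work entirely with subgroups of these two abelian groups without worrying about conjugacy within $\aut{A}$. If $|\pi_2(G)| = pq$, then $\pi_2(G)$ is a cyclic subgroup of $\aut{A}$ of order $pq$: the order-$p$ part must sit inside the Sylow $p$-subgroup $\langle \varphi_{1+p,1}\rangle$ of $\Z_{p^2}^\times$, and the order-$q$ part must be generated by $\varphi_{\t,1}$ (up to powers) since $q \mid p-1$ forces the order-$q$ elements of $\Z_{p^2}^\times$ to exist. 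So $\pi_2(G) = \langle \varphi_{1+p,1}, \varphi_{\t,1}\rangle$.

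Next I would use condition \NC and \nc together with Remark \ref{remark for lambdas}. Since $|G| = p^2q$ and $|\pi_2(G)| = pq$, the kernel $\ker \pi_2|_G$ has order $p$ and lies in $A \times 1$, so it is the unique order-$p$ subgroup $\langle \sigma^p \rangle$ of $A$. Writing $G$ in its standard presentation, $G = \langle \sigma^p, u_1 \varphi_{1+p,1}, u_2 \varphi_{\t,1}\rangle$ for suitable $u_1, u_2 \in A$, the key is to extract constraints on the $u_i$ from the relations that the generators must satisfy modulo the kernel. In particular, the automorphisms $\varphi_{1+p,1}$ and $\varphi_{\t,1}$ both fix the $q$-Sylow part $\langle \tau\rangle$ pointwise and act nontrivially only on $\langle\sigma\rangle$, and condition \nc forces $(u_1\varphi_{1+p,1})^p \in \ker\pi_2|_G = \langle \sigma^p\rangle$ and $(u_2\varphi_{\t,1})^q \in \langle\sigma^p\rangle$.

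The main obstacle is the combinatorial bookkeeping that turns these relations into an impossibility. I expect the computation of $(u_1\varphi_{1+p,1})^p$ to be decisive: expanding the product in $\Hol(A)$ gives a telescoping sum $\sum_{i=0}^{p-1}\varphi_{1+p,1}^i(u_1)$ for the $A$-component, and evaluating this sum on the $\sigma$-coordinate should show that the element cannot land in $\langle\sigma^p\rangle$ unless $u_1$ has no $\sigma$-component, while regularity (Lemma \ref{regularity}) via $\pi_1(G) = A$ simultaneously demands that the $\sigma$-coordinate be generated. The tension is that regularity requires the $u_i$ to collectively generate all of $A$ under $\pi_1$, yet the relation constraints on the $p$- and $q$-parts pin the components down too rigidly for this to happen. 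Concretely, I would show that the $q$-Sylow element $\tau$ can only enter through $u_2$, the order-$p$ automorphism can only contribute to the $\sigma$-direction modulo $\sigma^p$, and counting the available degrees of freedom leaves $\pi_1(G)$ with order strictly smaller than $p^2q$, contradicting regularity. Thus no such $G$ exists.
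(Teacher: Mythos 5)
Your setup coincides with the paper's --- the image $\pi_2(G)$ must be the unique order-$pq$ subgroup $\langle\varphi_{p+1,1},\varphi_{\t,1}\rangle$ of $\aut{A}$, the kernel of $\pi_2|_G$ must be $\langle\sigma^p\rangle$, and the contradiction should come from playing the relations of the standard presentation against Lemma \ref{regularity} --- but the computation you predict to be decisive comes out the opposite of what you claim, and the relations you invoke do not suffice. Writing $u_1=\sigma^{a_1}\tau^{b_1}$ and using $\sum_{i=0}^{p-1}(p+1)^i\equiv p\pmod{p^2}$ (note $p$ is odd here, as $p>q$), one gets $(u_1\varphi_{p+1,1})^p=\sigma^{pa_1}\tau^{pb_1}$: the $\sigma$-coordinate lies in $\langle\sigma^p\rangle$ for \emph{every} $a_1$, and the constraint produced is $q\mid b_1$, i.e.\ it kills the $\tau$-component of $u_1$, not its $\sigma$-component. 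Likewise $(u_2\varphi_{\t,1})^q$ imposes nothing at all: its $A$-part is $\sigma^{a_2\sum_{i=0}^{q-1}\t^i}$ and $\sum_{i=0}^{q-1}\t^i=(\t^q-1)/(\t-1)\equiv 0\pmod{p^2}$. So your two power relations only give $u_1\in\langle\sigma\rangle$, and no counting of ``degrees of freedom'' can finish from that alone: the choice $u_1=\sigma$, $u_2=\tau$ passes both of your tests, and what goes wrong with it (the generators simply fail to generate a group of order $p^2q$) is invisible to the argument you sketch.

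The relation you never mention is the one that decides. Since $\pi_2(G)$ is abelian, condition \nc also requires $[u_1\varphi_{p+1,1},u_2\varphi_{\t,1}]\in\langle\sigma^p\rangle$; computing in $\Hol(A)$ (both automorphisms fix $\tau$, so the $\tau$-parts cancel), the $A$-part of this commutator is $\sigma^{a_1(1-\t)+pa_2}$, and since $\t\not\equiv 1\pmod p$ this forces $p\mid a_1$, hence $u_1\in\langle\sigma^p\rangle=\ker{\pi_2|_G}$; multiplying the generator by $u_1^{-1}$ then puts $\varphi_{p+1,1}$ inside $G\cap(1\times\aut{A})$, contradicting Lemma \ref{regularity}. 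The paper's own proof sidesteps the commutator entirely by using that the order-$pq$ subgroup is \emph{cyclic}: taking the single generator $\alpha=\varphi_{\t(p+1),1}$, the standard presentation is $G=\langle\sigma^p,\sigma^a\tau^b\alpha\rangle$, and $(\sigma^a\tau^b\alpha)^q=\sigma^{aqp(\t(p+1)-1)^{-1}}\alpha^q$ has $A$-part automatically in $\langle\sigma^p\rangle$, so $\alpha^q\in G\cap(1\times\aut{A})$ at once, for any $a,b$. Either repair works; your sketch, as written, reaches neither.
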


\begin{proof}
	The unique subgroup of $\aut{A}$ of size $pq$ is generated by $\alpha=\varphi_{g(p+1),1}$ and the unique subgroup of order $p$ of $A$ is $\langle \sigma^p \rangle$. Assume that $G$ is a subgroup of size $p^2q$ of $\Hol(A)$ with $\pi_2(G)=\langle \alpha\rangle$. Then, the standard presentation of $G$ is
	\[
	G=\langle \sigma^p,\ \sigma^a\tau^b\alpha\rangle
	\]
	for some $a,b$.	Then, since $(p+1)^m=pm+1\pmod{p^2}$ we have that
	$$(\sigma^a\tau^b\alpha)^q=\sigma^{a\frac{g^q(p+1)^q-1}{g(p+1)-1}}\tau^{bq}\alpha^q=\sigma^{a\frac{qp}{g(p+1)-1}}\alpha^q\in G.$$
	Hence, since $\sigma^p\in G$ then $\alpha^q\in G\cap (1\times \aut{A})$ and so, according to Lemma \ref{regularity}, $G$ is not regular.
\end{proof}

The following theorems hold under the more general condition that $q\neq 1\pmod{p}$ and it will be used also in the sequel. 

\begin{theorem}\label{prop:cyclic_ker=pq}
	Let $p,q$ be primes with $q\neq 1\pmod{p}$. 
	The unique left brace of cyclic type with $|\ker{\lambda}|=pq$ is $(B,+,\circ)$ where 
	\begin{equation}\label{eq:formula cyclic pq}
	\begin{pmatrix} n \\ m \end{pmatrix} + \begin{pmatrix} s \\ r \end{pmatrix} =\begin{pmatrix}  n+s \\ m+r \end{pmatrix}, \qquad\qquad \begin{pmatrix} n \\ m \end{pmatrix} \circ \begin{pmatrix} s \\ r \end{pmatrix} =\begin{pmatrix}  n+s+pns \\ m+r \end{pmatrix}
	\end{equation}
	for every $0\leq n,s\leq p^2-1$ and $0\leq m,r\leq q-1$. In particular, $(B,\circ)\cong \mathbb{Z}_{p^2q}$.
\end{theorem}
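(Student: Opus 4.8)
The plan is to translate the statement into the language of regular subgroups via Theorem~\ref{thm:skew_holomorph} and Remark~\ref{remark for lambdas}. Writing $A=\Z_{p^2q}$, any such brace corresponds to a regular subgroup $G\le\Hol(A)$ of order $p^2q$, and Remark~\ref{remark for lambdas} gives $|\ker\lambda|=|G|/|\pi_2(G)|$; hence the constraint $|\ker\lambda|=pq$ is exactly $|\pi_2(G)|=p$. So I would classify, up to conjugacy by $\aut{A}$, the regular subgroups with $|\pi_2(G)|=p$, and then read off the multiplication from the standard presentation.

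First I would pin down the two invariants $\pi_2(G)$ and $\ker\pi_2|_G$. The group $\aut{A}\cong\Z_{p^2}^\times\times\Z_q^\times$ is abelian of order $p(p-1)(q-1)$, and the hypothesis $q\neq 1\pmod p$ guarantees that its unique subgroup of order $p$ sits inside the cyclic factor $\Z_{p^2}^\times$, namely $\langle\alpha\rangle$ with $\alpha=\varphi_{p+1,1}$; thus $\pi_2(G)=\langle\alpha\rangle$ is forced. Since $\ker\pi_2|_G$ is a subgroup of order $pq$ of the cyclic group $A$, it must be $\langle\sigma^p,\tau\rangle$. The standard presentation is then $G=\langle\sigma^p,\tau,\sigma^a\alpha\rangle$; reducing the coset representative modulo the kernel I may take $u=\sigma^a$ with $0\le a\le p-1$, and regularity through Lemma~\ref{regularity} (i.e.\ $\pi_1(G)=A$) forces $\gcd(a,p)=1$. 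I would then verify conditions \NC and \nc: the kernel is normal because $\alpha$ fixes $\sigma^p$ and $\tau$, while the $\sigma$-exponent of $(\sigma^a\alpha)^p$ is $a\sum_{k=0}^{p-1}(1+kp)=ap+a p^2(p-1)/2\equiv ap\pmod{p^2}$, so $(\sigma^a\alpha)^p\in\langle\sigma^p\rangle$ lies in the kernel, using $(p+1)^k\equiv 1+kp\pmod{p^2}$.

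Uniqueness is the second step. Since $\aut{A}$ is abelian it normalizes $\langle\alpha\rangle$ and it stabilizes the characteristic kernel, so conjugating $\sigma^a\alpha$ by $\varphi_{i,j}$ merely replaces $a$ by $ai$; as $i$ runs over $\Z_{p^2}^\times$ its residue modulo $p$ runs over all of $\Z_p^\times$, so every admissible value $a\in\{1,\dots,p-1\}$ gives a conjugate subgroup. This collapses the family to a single orbit, proving that the brace is unique.

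Finally, to obtain \eqref{eq:formula cyclic pq} I would fix $a=1$ and compute $\lambda$. For $(n,m)\in A$ the element of $G$ with first coordinate $\sigma^n\tau^m$ has automorphism part $\alpha^j$, where the $\sigma$-exponent of $(\sigma\alpha)^j$ being $\equiv j\pmod p$ forces $j\equiv n\pmod p$, so $\lambda_{\sigma^n\tau^m}=\alpha^n$. Evaluating $\alpha^n(\sigma^s\tau^r)=\sigma^{s(1+np)}\tau^r$ and applying $a\circ b=a+\lambda_a(b)$ yields precisely the stated product. For the isomorphism type $(B,\circ)\cong G$, note that the hypotheses force $p$ to be odd (for $p=2$ the condition $q\neq 1\pmod p$ would force $q=2=p$), so $p^2(p-1)/2\equiv 0\pmod{p^2}$ gives $(\sigma\alpha)^p=\sigma^p$ of order $p$, whence $\sigma\alpha$ has order $p^2$; since it commutes with the order-$q$ element $\tau$, the group $G\cong\Z_{p^2}\times\Z_q$ is cyclic. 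The main obstacle is the careful exponent bookkeeping modulo $p^2$, where the identity $(p+1)^k\equiv 1+kp$ and the oddness of $p$ must be tracked throughout.
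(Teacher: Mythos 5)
Your proof is correct, and it diverges from the paper's after the common first step. Both arguments identify $\pi_2(G)=\langle\varphi_{p+1,1}\rangle$ as the unique order-$p$ subgroup of $\aut{A}$ (using $q\neq 1\pmod p$), take $\ker\pi_2|_G=\langle\sigma^p,\tau\rangle$, and arrive at the standard presentation $G=\langle\sigma^p,\tau,\sigma^a\varphi_{p+1,1}\rangle$ with $a\not\equiv 0\pmod p$. From there the paper takes a citation route: it observes that $G$ is abelian, invokes a splitting result for braces whose additive and multiplicative groups are both abelian to write the brace as a direct product of a brace of size $p^2$ and a trivial brace of size $q$, and then quotes the known classification of braces of size $p^2$ (a unique nontrivial one with cyclic additive group), which simultaneously gives uniqueness, formula \eqref{eq:formula cyclic pq}, and (via Lemma \ref{Sylows are ideals}) the cyclicity of $(B,\circ)$. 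You instead complete the classification by hand: you show all admissible values of $a$ lie in a single $\aut{A}$-orbit (conjugation by $\varphi_{i,j}$ sends $a$ to $ai$, and reduction $\Z_{p^2}^\times\to\Z_p^\times$ is surjective), you verify \NC, \nc and regularity explicitly, you compute $\lambda_{\sigma^n\tau^m}=\varphi_{p+1,1}^n$ to derive the formula, and you determine $(B,\circ)\cong\Z_{p^2q}$ directly from $(\sigma\varphi_{p+1,1})^p=\sigma^p$ (where your observation that the hypothesis forces $p$ odd is exactly what makes the exponent $p^2(p-1)/2$ vanish modulo $p^2$). Your approach buys self-containedness — no appeal to the external splitting theorem or the $p^2$ classification — and it produces the formula and the multiplicative group by explicit computation rather than by quotation; the paper's approach buys brevity and sidesteps the orbit computation entirely. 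Both are complete proofs of the statement.
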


\begin{proof}
	The unique subgroup of size $p$ of $\aut{A}$ is the subgroup generated by $\varphi_{p+1,1}$ and $A$ has a unique subgroup of size $pq$, namely $\langle \sigma^p, \tau\rangle$.
	Assume that $G$ is a regular subgroup of $\Hol(A)$ with $|\pi_2(G)|=p$. Then $G$ has the following standard presentation:
	\[
	G=\langle\sigma^p,\ \tau,\ \sigma^a\tau^b\varphi_{p+1,1}\rangle=\langle\sigma^p,\ \tau,\ \sigma^a\varphi_{p+1,1}\rangle,
	\]
	where $1\leq a\leq p-1$ and in particular $G$ is abelian.
	According to \cite[Corollary 4.3]{nilpotent_type}, since both the additive and the multiplicative group of the left brace associated to $G$ are abelian, then such left brace splits as a direct product of the left brace of size $p^2$ and a trivial left brace of size $q$. According to the classification of left braces of size $p^2$ given in \cite[Proposition 2.4]{p_cube}, there is just one non-trivial such brace with cyclic additive group and it leads to formula \eqref{eq:formula cyclic pq}. The group $(B,\circ)$ is cyclic, since according to Lemma \ref{Sylows are ideals} its $p$-Sylow subgroup is cyclic.
\end{proof}

\begin{theorem}\label{prop:cyclic_ker=p2}
	Let $p,q$ be primes with $q\neq 1\pmod{p}$. The unique left brace of cyclic type with $|\ker{\lambda}|=p^2$ is $(B,+,\circ)$ where
	\begin{eqnarray}\label{formula cyclic pp}
	\begin{pmatrix} n \\ m \end{pmatrix} + \begin{pmatrix} s \\ r \end{pmatrix} =\begin{pmatrix}  n+s \\ m+r \end{pmatrix}, \qquad\qquad \begin{pmatrix} n \\ m \end{pmatrix} \circ \begin{pmatrix} s \\ r \end{pmatrix} =\begin{pmatrix}  n+\t^ms \\ m+r \end{pmatrix}.
	\end{eqnarray}
	for every $0\leq n, s\leq p^2-1$ and $0\leq m, r\leq q-1$. In particular, $(B,\circ)\cong \Z_{p^2}\rtimes_{\t}\Z_q$ and $(B,+,\circ)$ is a bi-skew brace
\end{theorem}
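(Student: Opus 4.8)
The plan is to apply the regular-subgroup correspondence of Theorem~\ref{thm:skew_holomorph} to $A=\Z_{p^2q}$ and isolate those $G\le\Hol(A)$ giving $|\ker\lambda|=p^2$. By Remark~\ref{remark for lambdas} this is equivalent to $|\pi_2(G)|=q$. Since $\aut A\cong\Z_{p^2}^\times\times\Z_q^\times$ is a product of two cyclic groups and its $q$-part can only come from $\Z_{p^2}^\times$ (because $|\Z_q^\times|=q-1$ is coprime to $q$), its $q$-Sylow is cyclic; hence $\aut A$ has a unique subgroup of order $q$, namely $\langle\varphi_{\t,1}\rangle$ with $\t$ of order $q$ in $\Z_{p^2}^\times$. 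Thus $\pi_2(G)=\langle\varphi_{\t,1}\rangle$. The kernel $\ker(\pi_2|_G)$ is a subgroup of $A$ of order $p^2$, and as $A$ is cyclic it must be $\langle\sigma\rangle$. Writing down the standard presentation and absorbing the $\sigma$-component of the second generator into $\langle\sigma\rangle$, every candidate has the form $G_b=\langle\sigma,\ \tau^b\varphi_{\t,1}\rangle$ with $0\le b\le q-1$.

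Next I would determine regularity and count orbits. A short computation in the holomorph gives that the elements of $G_b$ are $(\sigma^i\tau^{bj},\varphi_{\t^j,1})$, so $\pi_1(G_b)=\langle\sigma,\tau^b\rangle$; by Lemma~\ref{regularity}, $G_b$ is regular iff this equals $A$, i.e. iff $q\nmid b$ (automatic for $1\le b\le q-1$ since $q$ is prime). To collapse the family into one class I conjugate by $\varphi_{1,j}\colon\sigma\mapsto\sigma,\ \tau\mapsto\tau^j$: since $\aut A$ is abelian this fixes $\sigma$ and $\varphi_{\t,1}$ and sends $\tau^b\varphi_{\t,1}$ to $\tau^{bj}\varphi_{\t,1}$, so $\varphi_{1,j}G_b\varphi_{1,j}^{-1}=G_{bj}$; taking $j\equiv b^{-1}\pmod q$ carries every $G_b$ to $G_1$. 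Hence there is a single conjugacy class of such regular subgroups, and by Theorem~\ref{thm:skew_holomorph} a single brace of cyclic type with $|\ker\lambda|=p^2$.

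For the formula I read $\lambda$ off $G_1$: the element with $\pi_1$-image $\sigma^n\tau^m$ is $(\sigma^n\tau^m,\varphi_{\t^m,1})$, so $\lambda_{(n,m)}=\varphi_{\t^m,1}$. Consequently $\ker\lambda=\langle\sigma\rangle$ and $\Fix(B)=\langle\tau\rangle$, the latter because $\t$ reduces to an element of order $q$ modulo $p$, so $\t-1$ is a unit modulo $p^2$ and only the trivial $\sigma$-exponent survives all $\varphi_{\t^m,1}$. Splitting $(n,m)=(n,0)+(0,m)$ with $(n,0)\in\ker\lambda$ and $(0,m)\in\Fix(B)$, Lemma~\ref{ker fix}(ii) then gives $(n,m)\circ(s,r)=(n,0)+(0,m)+\varphi_{\t^m,1}(s,0)+(0,r)=(n+\t^m s,\ m+r)$, which is precisely \eqref{formula cyclic pp}.

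Finally, computing the conjugation relation $(\tau\varphi_{\t,1})\,\sigma\,(\tau\varphi_{\t,1})^{-1}=\sigma^{\t}$ inside the holomorph exhibits $G_1=\langle\sigma\rangle\rtimes\langle\tau\varphi_{\t,1}\rangle\cong\Z_{p^2}\rtimes_{\t}\Z_q$, and $(B,\circ)\cong G_1$ by Theorem~\ref{thm:skew_holomorph}. For the bi-skew claim I would verify the identity $x+(y\circ z)=(x+y)\circ x'\circ(x+z)$ directly from \eqref{formula cyclic pp}, using that the $\circ$-inverse of $x=(n,m)$ is $x'=(-\t^{-m}n,-m)$; with $x=(n,m)$, $y=(s,r)$, $z=(u,v)$ both sides collapse to $(n+s+\t^{r}u,\ m+r+v)$. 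I expect this last check to be the main technical burden, since it is the longest computation and the only place where the $\circ$-inverse must be tracked carefully; by contrast the enumeration is essentially forced by the uniqueness of the order-$q$ subgroup of $\aut A$ and of the order-$p^2$ subgroup of the cyclic group $A$.
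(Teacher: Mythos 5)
Your proof is correct, and its enumeration half coincides with the paper's: both reduce to $\pi_2(G)$ being the unique order-$q$ subgroup $\langle\varphi_{\t,1}\rangle$ of $\aut{A}$ and $\ker\pi_2|_G=\langle\sigma\rangle$, write the standard presentation $G_b=\langle\sigma,\tau^b\varphi_{\t,1}\rangle$, and collapse all $G_b$ into one conjugacy class by the automorphism $\tau\mapsto\tau^{b^{-1}}$ (your $\varphi_{1,j}$ with $j\equiv b^{-1}$ is exactly the paper's $\varphi_{1,b}^{-1}$). Where you genuinely diverge is in how the formula \eqref{formula cyclic pp} and the bi-skew property are established. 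The paper does not derive them at all: it simply cites \cite[Corollary 1.2]{skew_pq}, which asserts that \eqref{formula cyclic pp} defines a bi-skew brace, and then only proves uniqueness. You instead make the statement self-contained: you read $\lambda_{(n,m)}=\varphi_{\t^m,1}$ off the regular subgroup $G_1$, identify $\ker\lambda=\langle\sigma\rangle$ and $\Fix(B)=\langle\tau\rangle$ (using that $\t-1$ is a unit mod $p^2$), and apply Lemma \ref{ker fix}(ii) to get the $\circ$-formula --- which is in fact the same technique the paper uses for analogous results such as Theorem \ref{cor_formula_abelian_pp}, just not invoked here --- and you then verify the bi-skew identity $x+(y\circ z)=(x+y)\circ x'\circ(x+z)$ by direct computation with $x'=(-\t^{-m}n,-m)$, which checks out (both sides equal $(n+s+\t^{r}u,\,m+r+v)$). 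The trade-off: the paper's route is shorter but rests on an external result, while yours costs one explicit computation and makes the theorem independent of \cite{skew_pq}; your explicit holomorph computation $(\tau\varphi_{\t,1})\sigma(\tau\varphi_{\t,1})^{-1}=\sigma^{\t}$ also justifies the isomorphism type $(B,\circ)\cong\Z_{p^2}\rtimes_{\t}\Z_q$, which the paper leaves implicit.
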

\begin{proof}
	According to \cite[Corollary 1.2]{skew_pq}, we have that \eqref{formula cyclic pp} defines a bi-skew brace with the desired properties. We prove that there exists just one such left brace. The unique subgroup of order $q$ of $\aut{A}$ is the subgroup generated by $\varphi_{\t,1}$. The unique subgroup of order $p^2$ of $A$ is generated by $\sigma$. Let $G$ be a regular subgroup with $|\pi_2(G)|=q$, then $G$ has the following standard presentation:
	\[
	G_b=\langle\sigma,\ \tau^b\varphi_{\t,1}\rangle,
	\]
	with $1\leq b\leq q-1$.
	The group $G_b$ is conjugate to the subgroup 
	$$H=\langle \sigma,\tau\varphi_{\t,1}\rangle \cong \mathbb{Z}_{p^2}\rtimes_{\t} \mathbb{Z}_q$$
	by $\varphi_{1,b}^{-1}$. Therefore there exist a unique left brace with the desired properties up to isomorphism.
\end{proof}

In the following table we summarize the results of this subsection. The columns refer to the isomorphism class of the multiplicative group.

\begin{table}[H]
	\centering
	\begin{tabular}{c|c|c}
		$|\ker{\lambda}|$ &  $\mathbb{Z}_{p^2q}$  &  $\mathbb{Z}_{p^2}\rtimes_{\t} \mathbb{Z}_q$  \\
		\hline
		$pq$ & $1$ &-  \\
		$p^2$ &- & $1$ \\
		$p^2q$ & $1$ &-
	\end{tabular}
	\caption{Enumeration of left braces of cyclic type of size $p^2q$ for $p=1\pmod{q}$.}
	\label{table:cyclic_p=1}
\end{table}

\subsection{Left braces of $\Z_{p}^2\times \Z_q$-type}\label{subsection:p=1(q)_abelian_non-cyclic}

In this section $A$ will denote the abelian group $\Z_{p}^2\times \Z_q$. If $G$ is a regular subgroup of $\Hol(A)$ then $|\pi_2(G)|$ divides both $p^2q$ and $|\aut{A}|=p(p-1)^2(p+1)(q-1)$. So $|\pi_2(G)|$ divides $pq$ since $p=1\pmod{q}$.

\begin{remark}\label{action on q-Syl trivial}
	The automorphism of $A$ of order $p,q$ or $pq$ acts trivially on its $q$-Sylow subgroup, since $p$ and $q$ does not divide $q-1$. Hence, if $G$ is a regular group of size $p^2q$ then the action of $\pi_2(G)$ on the $q$-Sylow subgroup of $A$ is trivial.
\end{remark}

\begin{remark}\label{subgroups of GL} 
	Let assume that $p$ and $q$ are prime numbers such that $p=1\pmod{q}$ and consider	$$C=\begin{bmatrix} 1& 1 \\ 0 & 1\end{bmatrix},\quad \dm_s=\begin{bmatrix} g & 0\\ 0 & g^s \end{bmatrix} \quad\text{and}\quad \widetilde\dm=\begin{bmatrix} 1 & 0\\ 0 & g \end{bmatrix}$$
	for $0\leq s\leq q-1$.
	It is a well-known fact that, up to conjugation, the subgroups of order $q$ of $GL_2(p)$ are generated by one of the matrices $\dm_s$	for $s\in\B$ (including the case $q=2$). Notice that $\widetilde \dm$ is conjugated to $\dm_0$. 
	
	The subgroups of order $p$ of $GL_2(p)$ are the $p$-Sylow subgroups, and then they are all conjugate to the group generated by $C$.

	A set of representatives of conjugacy classes of subgroups of order $pq$ of $GL_2(p)$ is given by $H_s=\langle C,\dm_s\rangle$ and $\widetilde{H}=\langle C,\widetilde \dm\rangle$, where $0\leq s\leq q-1$.
	Indeed, up to conjugation, we can assume that the $p$-Sylow subgroup of a group $H$ of size $pq$ is generated by $C$. Then the generator of order $q$ is an upper triangular matrix, since it is in the normalizer of $C$ and then we can assume that $H$ is generated by $C$ and by a diagonal matrix of order $q$ which can be taken to be $\dm_s$ for $0\leq s\leq q-1$ or $\widetilde \dm$. Such matrices are not conjugate by the elements of the normalizer of $C$ and so the correspondent subgroups are not conjugate.  
\end{remark}

In the following theorem we assume once more that $q\neq 1\pmod{p}$ as we did in Theorem \ref{prop:cyclic_ker=pq}.

\begin{theorem}\label{sub_abelian_p}
	Let $p,q$ be primes such that $q\neq 1\pmod{p}$.
	The unique left brace of $A$-type with $|\ker{\lambda}|=pq$ is $(B,+,\circ)$ where
	\begin{eqnarray}\label{formula_sub_abelian_p}
	\begin{pmatrix} x_1 \\ x_2\\x_3 \end{pmatrix} + \begin{pmatrix} y_1 \\ y_2\\y_3 \end{pmatrix} =\begin{pmatrix} x_1+y_1 \\ x_2+y_2\\ x_3+y_3 \end{pmatrix},\qquad 
	\begin{pmatrix} x_1 \\ x_2 \\x_3\end{pmatrix} \circ \begin{pmatrix} y_1 \\ y_2 \\y_3 \end{pmatrix} =\begin{pmatrix} x_1+y_1+x_2 y_2 \\x_2+y_2 \\ x_3+y_3\end{pmatrix}\end{eqnarray}
	for every $0\leq x_1,x_2,y_1,y_2\leq p-1,\, 0\leq x_3,y_3\leq q-1$. In particular, $(B,\circ)\cong \mathbb{Z}_p^2\times \mathbb{Z}_q$.
\end{theorem}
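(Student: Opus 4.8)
The plan is to run the two-step strategy of Section~\ref{Sec Pre} through the correspondence of Theorem~\ref{thm:skew_holomorph}. Since a regular subgroup $G\le\Hol(A)$ has $|G|=|A|=p^2q$, Remark~\ref{remark for lambdas} gives $|\ker\lambda|=|G|/|\pi_2(G)|$, so the condition $|\ker\lambda|=pq$ is equivalent to $|\pi_2(G)|=p$. Thus I would classify, up to $\aut{A}$-conjugacy, the regular subgroups $G$ of $\Hol(A)$ with $|\pi_2(G)|=p$, and then read off the operation $\circ$ and the isomorphism type of $(B,\circ)$.

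For the first step I would pin down the two invariants $\pi_2(G)$ and $K=\ker(\pi_2|_G)$. By Remark~\ref{subgroups of GL} every subgroup of order $p$ of $GL_2(p)$ is conjugate to $\langle C\rangle$, and by Remark~\ref{action on q-Syl trivial} it acts trivially on the $q$-Sylow; hence, after conjugation, $\pi_2(G)=\langle C\rangle$. The kernel $K$ is a subgroup of $A$ of order $pq$ that by condition~\NC\ must be $C$-invariant. As $C$ fixes only the line $\langle\sigma\rangle$ inside $\Z_p^2$ while the $q$-Sylow $\langle\epsilon\rangle$ is always invariant, this forces $K=\langle\sigma,\epsilon\rangle$. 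The standard presentation is then $G=\langle\sigma,\epsilon,\sigma^a\tau^c\epsilon^d C\rangle$; absorbing the $\sigma^a\epsilon^d$ part into $K$ leaves $G=\langle\sigma,\epsilon,\tau^cC\rangle$, and regularity (Lemma~\ref{regularity}, i.e. $\pi_1(G)=A$) forces $c\neq 0$. Condition~\nc, namely $(\tau^cC)^p\in K$, is automatic from $C^p=\id{}$.

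For the second step (uniqueness) I would conjugate by the normalizer of $\langle C\rangle$ in $\aut{A}$, which is the Borel subgroup of upper triangular matrices acting trivially on $\Z_q$; these automatically stabilize both $\langle\sigma\rangle$ and $\langle\epsilon\rangle$, hence fix $K$. Conjugating $\tau^cC$ by the diagonal matrix with both diagonal entries $c^{-1}$ fixes $C$ and sends the translation part $\tau^c$ to a translate of $\tau$ modulo $K$, so every admissible $G$ is conjugate to $H=\langle\sigma,\epsilon,\tau C\rangle$. This exhibits a single orbit, giving uniqueness; regularity of $H$ follows by checking $\pi_1(H)=A$ (or a trivial stabilizer) as in Proposition~\ref{cyclic no pq}.

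Finally I would extract the formula and the isomorphism type. A short induction shows $(\tau,C)^k=\bigl((\binom{k}{2},k,0),C^k\bigr)$, so the $\pi_1$-preimage of $x=\sigma^{x_1}\tau^{x_2}\epsilon^{x_3}$ has second coordinate $C^{x_2}$, i.e. $\lambda_x=C^{x_2}$; then $x\circ y=x+\lambda_x(y)=x+C^{x_2}y$ gives \eqref{formula_sub_abelian_p} at once (the same conclusion follows from Lemma~\ref{ker fix} using $\ker\lambda=\Fix(B)=\langle\sigma,\epsilon\rangle$ together with $\lambda_\tau=C$). The cross term $x_2y_2$ is symmetric, so $\circ$ is commutative; since the generators $(\sigma,\id{}),(\tau,C),(\epsilon,\id{})$ commute and the first two have $\circ$-order $p$ (here one uses that $p$ is odd, as holds throughout this section), one gets $(B,\circ)\cong\Z_p^2\times\Z_q=A$. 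I expect the main obstacle to be the uniqueness step: one must confirm that the normalizer of $\pi_2(G)=\langle C\rangle$ acts transitively on the admissible parameters $c\in\{1,\dots,p-1\}$ while preserving $K$, so that no residual invariant separates these groups and they genuinely form one conjugacy class.
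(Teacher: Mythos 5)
Your proposal is correct, and its first half (reduction via Remarks~\ref{action on q-Syl trivial} and \ref{subgroups of GL} to $\pi_2(G)=\langle C\rangle$ with kernel $\langle\sigma,\epsilon\rangle$, hence $G=\langle\sigma,\epsilon,\tau^cC\rangle$ with $c\neq0$) coincides with the paper's. Where you diverge is in how uniqueness and the formula are obtained: the paper observes at this point that $G$ is abelian (since $C$ fixes $\sigma$ and $\epsilon$), so both groups of the associated brace are abelian, and then invokes two external results --- the splitting theorem \cite[Corollary 4.3]{nilpotent_type}, which decomposes the brace as a direct product of a brace of size $p^2$ and a trivial brace of size $q$, and the classification of braces of size $p^2$ \cite[Proposition 2.4]{p_cube}, which gives a unique nontrivial one --- to conclude both uniqueness and formula \eqref{formula_sub_abelian_p} at once, with $(B,\circ)\cong A$ read off from the $p$-Sylow. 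You instead stay entirely inside $\Hol(A)$: you prove uniqueness by exhibiting the conjugating automorphism $c^{-1}I$ (which is central in $GL_2(p)$, so it centralizes $C$ and stabilizes the kernel, carrying every admissible $G$ to $H=\langle\sigma,\epsilon,\tau C\rangle$), and you derive $\lambda_x=C^{x_2}$ and the $\circ$-formula by computing $(\tau,C)^k$ explicitly. Your route is self-contained and elementary, and it produces the explicit conjugacy-class representative; the paper's route is shorter and reuses known classifications, at the price of outsourcing the real content. Your identification of $(B,\circ)$ is also sound, including the correct observation that $\tau$ has $\circ$-order $p$ only because $p$ is odd --- which is indeed forced here, since $q\neq1\pmod p$ with $p\neq q$ rules out $p=2$ (and for $p=2$ the statement genuinely fails: compare the brace $B_{0,1}$ of Proposition~\ref{lem:4p_i_abelian_2}, whose multiplicative group is cyclic).

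One cosmetic flaw: your parenthetical claim that the formula ``follows from Lemma~\ref{ker fix} using $\ker\lambda=\Fix(B)=\langle\sigma,\epsilon\rangle$'' does not work as stated, because here $\ker\lambda+\Fix(B)=\langle\sigma,\epsilon\rangle$ has index $p$ in $B$, so Lemma~\ref{ker fix}(ii) cannot reconstruct products involving $\tau$; one would additionally need the computation of $\underbrace{\tau\circ\cdots\circ\tau}_{k}$, as is done in the proof of Proposition~\ref{ref for reg}. Since your main derivation of the formula via the explicit elements of $H$ is complete, this aside can simply be deleted.
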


\begin{proof}
	Let $G\leq \Hol(A)$ be a regular subgroup and let $|\pi_2(G)|=p$. According to Remarks \ref{action on q-Syl trivial} and \ref{subgroups of GL} and the fact that $q\neq 1\pmod p$ we can assume that $\pi_2(G)$ is generated by $C$, up to conjugation. Therefore the group $G$ has the following standard presentation: $$G=\langle v,\ \epsilon,\  \sigma^a \tau^b C\rangle$$
	for some $0\leq a,b\leq p-1$ and $v\in\langle\sigma,\tau\rangle$. The condition \NC implies that $v\in \langle\sigma\rangle$ and then $G$ is abelian.
	Therefore both the additive and the multiplicative group of the left braces obtained by $G$ are abelian, and so according to \cite[Corollary 4.3]{nilpotent_type} such left braces split as a direct product of a left brace of size $p^2$ and a trivial left brace of size $q$. According to the classification of left braces of size $p^2$ given in \cite[Proposition 2.4]{p_cube}, there is just one non-trivial such brace and it leads to formula \eqref{formula_sub_abelian_p}. According to Lemma \ref{Sylows are ideals}, the group $(B,\circ)$ is isomorphic to $A$ since its $p$-Sylow subgroup is elementary abelian.
\end{proof}



\begin{theorem}\label{cor_formula_abelian_pp}
	Let $p,q$ be primes such that $p=1\pmod{q}$. The left braces of $A$-type with $|\ker{\lambda}|=p^2$ are $B_s=(A,+,\circ)$ for $s\in\B$ where
	\begin{eqnarray}\label{formula1}
	\begin{pmatrix} x_1 \\ x_2\\x_3 \end{pmatrix} + \begin{pmatrix} y_1 \\ y_2\\y_3 \end{pmatrix} =\begin{pmatrix} x_1+y_1 \\ x_2+y_2\\x_3+y_3 \end{pmatrix},\qquad 
	\begin{pmatrix} x_1 \\ x_2 \\x_3\end{pmatrix} \circ \begin{pmatrix} y_1 \\ y_2 \\y_3 \end{pmatrix} =\begin{pmatrix} x_1+g^{x_3} y_1 \\x_2+\left(g^{s}\right)^{x_3} y_2 \\ x_3+y_3\end{pmatrix}\end{eqnarray}
	for every $0\leq x_1,x_2,y_1,y_2\leq p-1,\, 0\leq x_3,y_3\leq q-1$. In particular $(B_s,\circ)\cong \G_s$, and $B_s$ is a bi-skew brace.
\end{theorem}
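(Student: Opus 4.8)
The plan is to translate the classification into one of regular subgroups and invoke Remark~\ref{remark for lambdas}: the condition $|\ker\lambda|=p^2$ is equivalent to $|\pi_2(G)|=|G|/|\ker\lambda|=q$. So first I would determine the two data $\pi_2(G)$ and $\ker\pi_2|_G$. By Remark~\ref{action on q-Syl trivial} the $\Z_q^\times$-component of an order-$q$ subgroup of $\aut{A}=GL_2(p)\times\Z_q^\times$ is trivial, so $\pi_2(G)\leq GL_2(p)$, and by Remark~\ref{subgroups of GL} I may assume up to conjugation that $\pi_2(G)=\langle\dm_s\rangle$ with $s\in\B$. Since $\ker\pi_2|_G$ is a subgroup of $A$ of order $p^2$ and the unique such subgroup is the $p$-Sylow $\langle\sigma,\tau\rangle$, condition \NC holds automatically (it is characteristic, hence normal in $\Hol(A)$).

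Next I would write the standard presentation $G=\langle\sigma,\tau,\,u\dm_s\rangle$ and reduce $u$. Because $u$ is only determined modulo $\ker\pi_2|_G=\langle\sigma,\tau\rangle$, I can clear its $\sigma,\tau$ components and take $u=\epsilon^c$; since $\dm_s$ fixes $\epsilon$ and $g$ has order $q$, a short computation gives $(\epsilon^c\dm_s)^q=1$, so condition \nc is satisfied, while regularity via Lemma~\ref{regularity} forces $\epsilon\in\pi_1(G)$, i.e. $c\neq 0$. Conjugating by the automorphism $\epsilon\mapsto\epsilon^\alpha$ in $\Z_q^\times\leq\aut{A}$, which centralizes $\dm_s$, rescales $c$, so I normalize $c=1$ and obtain $G=G_s:=\langle\sigma,\tau,\epsilon\dm_s\rangle$. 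I would then check regularity of $G_s$ explicitly: $\langle\sigma,\tau\rangle\cap\langle\epsilon\dm_s\rangle=1$ yields $|G_s|=p^2q=|A|$ and $\pi_1(G_s)=A$. Distinct $s\in\B$ give non-conjugate $G_s$, because the conjugacy class of $\pi_2(G_s)=\langle\dm_s\rangle$ is a conjugation invariant and these subgroups are pairwise non-conjugate in $GL_2(p)$ by Remark~\ref{subgroups of GL}; by Theorem~\ref{thm:skew_holomorph} the associated braces are then pairwise non-isomorphic, which also shows they are genuinely $|\B|$ in number.

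For the formulas I would use that any $v=(x_1,x_2,x_3)$ lifts in $G_s$ to $(v,\dm_s^{x_3})$, since $(\epsilon\dm_s)^{x_3}=(\epsilon^{x_3},\dm_s^{x_3})$ while $\sigma,\tau$ contribute trivial automorphism parts. Hence $\lambda_v=\dm_s^{x_3}$, and $a\circ b=a+\lambda_a(b)$ produces exactly \eqref{formula1}. Matching the conjugation relations $(\epsilon\dm_s)\sigma(\epsilon\dm_s)^{-1}=\sigma^{g}$ and $(\epsilon\dm_s)\tau(\epsilon\dm_s)^{-1}=\tau^{g^{s}}$ identifies $(B_s,\circ)=G_s\cong\G_s$. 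For the bi-skew claim, writing elements as $(u,k)\in\Z_p^2\times\Z_q$ the operations become the direct product for $+$ and the semidirect product $(u,k)\circ(u',k')=(u+\dm_s^{k}u',k+k')$ for $\circ$, with $\circ$-inverse $(u,k)'=(-\dm_s^{-k}u,-k)$; substituting into $x+(y\circ z)=(x+y)\circ x'\circ(x+z)$ shows both sides equal $(a+b+\dm_s^{j}c,\,i+j+l)$, so $B_s$ is a bi-skew brace (equivalently, it is the semidirect-product bi-skew brace of \cite{biskew}, paralleling \cite[Corollary 1.2]{skew_pq} in the cyclic case).

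I expect the step requiring the most care to be the reduction of the standard presentation together with the non-conjugacy bookkeeping: making sure that every regular $G$ with $|\pi_2(G)|=q$ truly reduces to some $G_s$, and that no two of the $|\B|$ subgroups collapse under conjugation. Both rest entirely on Remark~\ref{subgroups of GL} (the conjugacy of $\langle\dm_s\rangle$ with $\langle\dm_{s^{-1}}\rangle$ is precisely what the parametrization by $\B$ encodes). The extraction of \eqref{formula1} and the bi-skew verification are then routine computations.
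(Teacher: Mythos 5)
Your proposal is correct and follows essentially the same route as the paper: reduction to regular subgroups with $|\pi_2(G)|=q$ via Remarks \ref{remark for lambdas}, \ref{action on q-Syl trivial} and \ref{subgroups of GL}, normalization of the standard presentation $\langle\sigma,\tau,\epsilon^a\dm_s\rangle$ by an automorphism of the $\Z_q$-factor, non-conjugacy read off from the $\pi_2$-images, and the formula from $\lambda_v=\dm_s^{x_3}$. The only cosmetic differences are that the paper verifies regularity by passing to the quotient $\Hol(A)/\langle\sigma,\tau\rangle$, derives the formula via Lemma \ref{ker fix}(ii), and cites \cite[Proposition 1.1]{skew_pq} for the bi-skew property where you substitute directly into the bi-skew identity; all of these are equivalent verifications of the same steps.
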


\begin{proof}
	The groups 
	\begin{equation}\label{list_sub_abelian_q}
	H_s=\langle \sigma,\ \tau,\ \epsilon \dm_s\rangle\cong \G_s,
	\end{equation}
	for $s\in \B$ are not pairwise conjugated since their image under $\pi_2$ are not and $|\pi_2(H_s)|=q$.

	Let $K=\langle \sigma,\tau\rangle$ and $p:\Hol(A)\longrightarrow \Hol(A)/K\cong \Z_q\times \aut{A}$. Assume that $h\in (1\times \aut{A})\cap H_s$. Then $p(h)\in (1\times\aut{A})\cap \langle \epsilon \dm_s\rangle=1$. Therefore $h\in K\cap (1\times \aut{A})=1$. Hence, according to Lemma \ref{regularity} $H_s$ is regular.

	We show that every regular subgroup $G$ with $|\pi_2(G)|=q$ is conjugate to one of the groups in \eqref{list_sub_abelian_q}. The kernel of $\pi_2|_G$ is the $p$-Sylow subgroup of $A$ and the image of $\pi_2|_G$ is generated by an automorphism of order $q$. Then, according to Remarks \ref{action on q-Syl trivial} and \ref{subgroups of GL}, we can assume that the automorphism is given by $\dm_s$ for $s\in\B$. Therefore
	$$G=\langle \sigma,\ \tau,\ \epsilon^a\dm_s\rangle$$
	for some $a\neq 0$ since $G$ is regular. Then $G$ is conjugate to $H_s$ by the automorphism of $\Z_q$ mapping $\epsilon$ to $\epsilon^{a^{-1}}$.

	Let $B_s$ be the left brace associated to $H_s$. Then $\sigma,\tau\in \ker{\lambda}$ and $\epsilon\in \Fix(B_s)$, so we can apply Lemma \ref{ker fix}(ii), i.e. 
	\begin{align*}
		\sigma^n\tau^m\epsilon^l\circ \sigma^x\tau^y\epsilon^z&=\sigma^n\tau^m\epsilon^l\lambda_{\sigma^n\tau^m\epsilon^l}(\sigma^x\tau^y\epsilon^z)\\
		&=\sigma^n\tau^m\epsilon^l\lambda_\epsilon^l(\sigma^x\tau^y\epsilon^z)\\
&=\sigma^n\tau^m\lambda_\epsilon^l(\sigma^x\tau^y)\epsilon^{l+z} = \sigma^n\tau^m\dm_s^l(\sigma^x\tau^y)\epsilon^{l+z}
	\end{align*}
	 for every $0\leq n,m,x,y\leq p-1,\, 0\leq l,z\leq q-1$. Then \eqref{formula1} follows and the left braces $B_s$ are bi-skew braces according to \cite[Proposition 1.1]{skew_pq}, since $(A,+)=\Z_p^2\times \Z_q$ and $(A,\circ)=\Z_p^2\rtimes_{\dm_s} \Z_q$.
\end{proof}

\begin{proposition}\label{ref for reg}
	Let $q>2$. The unique left brace of $A$-type with $|\ker{\lambda}|=p$ is $(B,+,\circ)$ where
	\begin{eqnarray}\label{formula_sub_abelian_pq}
	\begin{pmatrix} x_1 \\ x_2\\x_3 \end{pmatrix} + \begin{pmatrix} y_1 \\ y_2\\y_3 \end{pmatrix} =\begin{pmatrix} x_1+y_1 \\ x_2+y_2\\ x_3+y_3 \end{pmatrix},\quad \begin{pmatrix} x_1 \\ x_2 \\x_3\end{pmatrix} \circ \begin{pmatrix} y_1 \\ y_2 \\y_3 \end{pmatrix} 
	=\begin{pmatrix} x_1+g^{x_3}y_1+g^{2^{-1} x_3}x_2 y_2\\
	x_2+g^{2^{-1}x_3}y_2 \\ x_3+y_3\end{pmatrix}
	\end{eqnarray}
	for every $0\leq x_1,x_2,y_1,y_2\leq p-1,\, 0\leq x_3,y_3\leq q-1$. In particular, $(B,\circ)\cong\G_{2}$.
\end{proposition}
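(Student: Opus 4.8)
The plan is to determine all regular subgroups $G\le\Hol(A)$ with $|\ker{\lambda}|=p$; by Remark \ref{remark for lambdas} this is exactly the condition $|\pi_2(G)|=pq$. Since $p=1\pmod q$, the image $\pi_2(G)$ is a subgroup of order $pq$ of $GL_2(p)$ that acts trivially on the $q$-Sylow subgroup (Remark \ref{action on q-Syl trivial}), so by Remark \ref{subgroups of GL} it is conjugate either to $H_s=\langle C,\dm_s\rangle$ for some $0\le s\le q-1$ or to $\widetilde H=\langle C,\widetilde\dm\rangle$. In either case $\pi_2(G)$ contains $C$, and the only line of the $p$-Sylow subgroup fixed by $C$ is $\langle\sigma\rangle$; as $\ker{\pi_2|_G}$ has order $p$ and is $\pi_2(G)$-invariant, it must equal $\langle\sigma\rangle$. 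Hence $G$ admits a standard presentation $G=\langle\sigma,\ uC,\ w\dm_s\rangle$ (respectively with $\widetilde\dm$) for some $u,w\in A$.

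Next I would constrain $u$ and $w$. Writing $u=\sigma^{u_1}\tau^{u_2}\epsilon^{u_3}$, condition \nc applied to $C^p=1$ forces $u_3=0$ (recall $p$ is odd), and regularity forces $u_2\neq0$, for otherwise $u\in\langle\sigma\rangle$ and $\sigma^{-u_1}\cdot uC=C\in G\cap(1\times\aut{A})$ would contradict Lemma \ref{regularity}; similarly the $\epsilon$-component of $w$ must be a unit. The decisive step is the lifted commutation relation $\dm_s C\dm_s^{-1}=C^{g^{1-s}}$: the element $(w\dm_s)(uC)(w\dm_s)^{-1}(uC)^{-g^{1-s}}$ has trivial $\pi_2$-image and therefore lies in $\ker{\pi_2|_G}=\langle\sigma\rangle$, but a direct computation shows its $\tau$-component is $(g^{s}-g^{1-s})u_2$. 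If this were nonzero then $\tau\in G$, and together with $uC$ this again produces the pure automorphism $C$, contradicting regularity; so $(g^{s}-g^{1-s})u_2=0$, and since $u_2\neq0$ we obtain $g^{s}=g^{1-s}$, i.e.\ $2s\equiv1\pmod q$ and $s=2^{-1}$. Running the same computation with $\widetilde\dm$ in place of $\dm_s$ gives the $\tau$-component $(g-g^{-1})u_2\neq0$ (as $g$ has order $q>2$), which excludes $\widetilde H$ altogether. I expect this to be the heart of the proof: it is precisely what collapses the a priori $q+1$ possible images $\pi_2(G)$ to the single class $H_{2^{-1}}$.

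It then remains to settle existence, uniqueness, and the formula. For existence I would take $G=\langle\sigma,\ \tau C,\ \epsilon\dm_{2^{-1}}\rangle$ and verify with Lemma \ref{regularity} that $\pi_1|_G$ is bijective; then $\langle\sigma,\tau C\rangle$ is a normal elementary abelian $p$-Sylow subgroup on which $\epsilon\dm_{2^{-1}}$ acts with eigenvalues $g$ and $g^{1-2^{-1}}=g^{2^{-1}}$, so $(B,\circ)\cong G\cong\Z_p^2\rtimes_{\dm_{2^{-1}}}\Z_q=\G_{2^{-1}}\cong\G_2$, the last isomorphism because $\G_k\cong\G_{k^{-1}}$. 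For uniqueness I would conjugate an arbitrary such $G$ by the normalizer of $H_{2^{-1}}$ (the upper-triangular subgroup of $GL_2(p)$) together with $\Z_q^\times$: after adjusting $u,w$ modulo $\langle\sigma\rangle$, a diagonal conjugation normalizes $u_2=1$ and an element of $\Z_q^\times$ normalizes the $\epsilon$-component of $w$, while a conjugation by a unipotent $C^m$—re-expressed in standard form—shifts the $\tau$-component of $w$ and can be chosen to annihilate it. This unipotent reduction is the one genuinely delicate point of the uniqueness argument, since diagonal conjugations alone leave the $\tau$-component of $w$ (relative to $u_2$) unchanged. Finally, reading off $\lambda_{\sigma^{x_1}\tau^{x_2}\epsilon^{x_3}}=\pi_2\big((\pi_1|_G)^{-1}(\sigma^{x_1}\tau^{x_2}\epsilon^{x_3})\big)=C^{x_2}\dm_{2^{-1}}^{x_3}$ and substituting into $a\circ b=a+\lambda_a(b)$ gives exactly \eqref{formula_sub_abelian_pq}.
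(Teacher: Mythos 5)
Your proposal is correct and follows essentially the same route as the paper's proof: the same reduction of $\pi_2(G)$ to $\langle C,\dm_s\rangle$ or $\widetilde{H}$ via Remarks \ref{action on q-Syl trivial} and \ref{subgroups of GL}, the same lifted commutation relation (your explicit commutator is exactly the paper's computation \eqref{eq3.9}) forcing $s=2^{-1}$ and ruling out $\widetilde{H}$ through $g^2=1$, the same normalization by a scalar matrix, an element of $\Z_q^\times$, and a power of $C$, and the same identification $\lambda_{\sigma^{x_1}\tau^{x_2}\epsilon^{x_3}}=C^{x_2}\dm_{2^{-1}}^{x_3}$ yielding \eqref{formula_sub_abelian_pq}. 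The only deviations are cosmetic: you deduce $s=2^{-1}$ before normalizing the generators rather than after, and you read $\lambda$ off the regular subgroup via Remark \ref{remark for lambdas} instead of invoking Lemma \ref{ker fix}.
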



\begin{proof}
	The image under $\pi_2$ of the group 
	$$    H=\langle \sigma,\ \tau C,\ \epsilon\dm_{2^{-1}} \rangle\cong \G_{2^{-1}}\cong \G_2$$
	has size $pq$. Assume that $h=\sigma^n(\tau C)^m (\epsilon\dm_{2^{-1}})^l=\sigma^{n+\frac{m(m-1)}{2}}\tau^m C^m \epsilon^l\dm_{2^{-1}}^l$ is in the stabilizer of $1$, i.e. $\sigma^{n+\frac{m(m-1)}{2}}\tau^m \epsilon^l=1$. Therefore $n=m=l=1$, so $h=1$ and then, since $|H|=p^2q$ we have that $H$ is regular.
	Let $G$ be a regular subgroup of $\Hol(A)$ with $|\pi_2(G)|=pq$. According to Remarks \ref{action on q-Syl trivial} and \ref{subgroups of GL} we can assume that $\pi_2(G)=\langle C,\dm_s\rangle $ for some $s$ or $\pi_2(G)=\langle C,\widetilde{\dm}\rangle $. In the first case, the group $G$ has the following standard presentation
	$$G=\langle v,\ u \epsilon^a C,\  w\epsilon^b \dm_s\rangle,$$
	where $v,u,w\in\langle\sigma,\tau\rangle$. Checking condition \nc for $\left(u \epsilon^a C\right)^p \in \ker{\pi_2|_G}$ we have $a=0$ and by condition \NC we have $v\in \langle\sigma\rangle$. Therefore
	$$G=\langle \sigma,\ \tau^a  C,\  \tau^d\epsilon^b \dm_s\rangle,$$
	where $a\neq 0$ since $G$ is regular. Conjugating by the automorphism $a^{-1}I$ of $\mathbb{Z}_p^2$ 
	we can assume that $a=1$. Therefore 
	$$G=\langle \sigma, \tau  C,  \tau^d\epsilon^b \dm_s\rangle=\langle \sigma,\ \tau  C,\  \epsilon^b C^{-d}\dm_s\rangle$$
	and so $b\neq 0$ and conjugating by the automorphism $b^{-1}$ of $\Z_q$ we can assume that also $b=1$. 
	Since
	\begin{equation}\label{eq3.9}
(\epsilon C^{-d}\dm_s)\tau C(\epsilon C^{-d}\dm_s)^{-1}=	\epsilon C^{-d}\dm_s(\tau) \dm_s C\dm_s^{-1}C^{d}\epsilon^{-1}=\tau^{g^s}C^{g^{1-s}} \pmod{\langle \sigma\rangle}
	\end{equation}
	and $C^{-d}\dm_s C\dm_s^{-1}C^d=C^{g^{1-s}} $, by condition \nc we have $\tau^{g^s}C^{g^{1-s}}=(\tau C)^{g^{1-s}}=\tau ^{g^{1-s}}C^{g^{1-s}} \pmod{\langle \sigma\rangle}$ and then $1-s=s \pmod q$, i.e. $s=2^{-1}$. Then $G$ is conjugate to $H$ by $C^n$ where $n=\frac{d}{1-g^{2^{-1}}}$.
	Let $B$ be the skew brace associated to the subgroup $H$. Then $\epsilon\in \Fix(B)$ and $\sigma\in \ker{\lambda}$, and so
	\begin{align*}
		\underbrace{\epsilon\circ \epsilon\circ \ldots\circ \epsilon}_{x_3}&=\epsilon^{x_3}, \qquad
		\underbrace{\tau\circ \tau\circ \ldots\circ \tau}_{x_2}=\sigma^{\frac{x_2(x_2-1)}{2}}\tau^{x_2}=\sigma^{\frac{x_2(x_2-1)}{2}}\circ \tau^{x_2}
	\end{align*}{}
	Hence $\lambda_{\sigma^{x_1} \tau^{x_2}\epsilon^{x_3}}=\lambda_{\sigma^{x_1}\circ  \tau^{x_2}\circ  \epsilon^{x_3}}=\lambda_{\tau^{x_2}}\lambda_{\epsilon^{x_3}}=C^{x_2}\dm_{2^{-1}}^{x_3}$. Then \eqref{formula_sub_abelian_pq} follows.
	If $\pi_2(G)=\widetilde{H}$ then we can argue similarly checking the \nc and \NC conditions. From \eqref{eq3.9}, in which we replace $\dm_s$ by $\widetilde{\dm}$, we get that $g^2=1$, hence a contradiction.
\end{proof}{}

For the next proposition, recall that $g$ is an element of order $q$ in $\Z_p^\times$, so for the case $q=2$ we assume $g=-1$.

\begin{proposition}\label{ref for reg2}
	Let $q=2$. The unique left brace of $\Z_p^2\times \Z_2$-type with $|\ker{\lambda}|=p$ is $(B,+,\circ)$ where
	\begin{eqnarray}\label{formula_sub_abelian_pq2}
	\begin{pmatrix} x_1 \\ x_2\\x_3 \end{pmatrix} + \begin{pmatrix} y_1 \\ y_2\\y_3 \end{pmatrix} =\begin{pmatrix} x_1+y_1 \\ x_2+y_2\\ x_3+y_3 \end{pmatrix},\quad \begin{pmatrix} x_1 \\ x_2 \\x_3\end{pmatrix} \circ \begin{pmatrix} y_1 \\ y_2 \\y_3 \end{pmatrix} 
	=\begin{pmatrix} x_1+y_1+(-1)^{ x_3}x_2 y_2\\
	x_2+(-1)^{x_3}y_2 \\ x_3+y_3\end{pmatrix}
	\end{eqnarray}
	for every $0\leq x_1,x_2,y_1,y_2\leq p-1,\, 0\leq x_3,y_3\leq 1$. In particular, $(B,\circ)\cong\G_{0}$.
\end{proposition}

\begin{proof}
	We can use the very same argument of Proposition \ref{ref for reg}. Let $G$ be a regular subgroup of $\Hol(A)$ such that $\pi_2(G)=\langle C, \dm_s\rangle$. Then, as in Proposition \ref{ref for reg} we have  
	$$G=\langle \sigma,\ \tau C,\ \tau^d\epsilon\dm_s\rangle=\langle \sigma,\ \tau C,\ \epsilon C^{-d}\dm_s\rangle$$ and so
	\begin{equation*}
(\epsilon C^{-d}\dm_s) \tau C (\epsilon C^{-d}\dm_s)^{-1}=		\epsilon C^{-d}\dm_s(\tau) \dm_s C\dm_s^{-1}C^{d}\epsilon^{-1}=\tau^{(-1)^s}C^{(-1)^{1-s}} \pmod{\langle \sigma\rangle}.
	\end{equation*}
	By condition \nc, we have $\tau^{(-1)^s}C^{(-1)^{1-s}}=(\tau C)^{(-1)^{1-s}}=\tau ^{(-1)^{1-s}}C^{(-1)^{1-s}}\pmod{\langle \sigma\rangle}$. Then $(-1)^{1-s}=(-1)^s$, and so it follows that $-1=1$, hence $p=2$, contradiction. Therefore, necessarily $\pi_2(G)=\widetilde{H}$. Using condition \nc and \NC we have that $G$ is conjugate to the following group
	$$K=\langle \sigma,\ \tau C,\ \epsilon \widetilde{D} \rangle.$$
	Using the same idea as in Proposition \ref{ref for reg} we can show that the skew brace associated to $G$ is given by \eqref{formula_sub_abelian_pq2}.
\end{proof}

The following table summarizes the results of this subsection.




\begin{table}[H]
	\centering
	\begin{tabular}{c|c|c|c}
		$|\ker{\lambda}|$ &  $\mathbb{Z}_{p}^2\times\Z_q$  &   $\G_{2}$ & $\G_k$,\, $k\in \B\setminus\{2\}$  \\
		\hline
		$p$  & - & 1 &-  \\
		$pq$ & 1 &- &- \\
		$p^2$ &- & 1 & 1 \\
		$p^2q$ & 1 &- &-
	\end{tabular}
	\caption{Enumeration of left braces of $\Z_p^2\times\Z_q$-type for $p=1\pmod{q}$. Note that, for $q=3$ we have that $2=-1 \pmod{3}$. For the case $q=2$, $\G_2=\G_0$ and the last column correspond to $\G_1$.}
	
	\label{table:abelian_p=1(q)}
\end{table}

\section{Left braces of size $p^2q$ with $p=-1 \pmod{q}$}\label{section:p=-1(q)}




In this section we assume that $p$ and $q$ are odd primes such that $p=-1\pmod{q}$ and that $\h=x^2+\k x+1$ is an irreducible polynomial over $\Z_p$ such that its companion matrix 
$$F=\begin{bmatrix}
0 & -1 \\
1 & -\xi
\end{bmatrix}$$ 
has order $q$.
According to \cite[Proposition 21.17]{EnumerationGroups}, there exists a unique non abelian group of size $p^2q$. Its presentation is given in \cite{auto_pq}:
$$\G_F=\langle \sigma,\tau,\epsilon\,|\, \sigma^p=\tau^p=\epsilon^q=1,\, \epsilon \sigma \epsilon^{-1}=\tau,\, \epsilon \tau \epsilon^{-1}=\sigma^{-1}\tau^{-\k}\rangle \cong \mathbb{Z}_p^2\rtimes_F \mathbb{Z}_q.$$
The enumeration of the left braces of size $p^2q$ with $p=-1\pmod q$ according to their additive and multiplicative group is the following:

\begin{table}[H]
	\centering
	\begin{tabular}{c|c|c|c}
		$+ \backslash \circ$ & $\Z_{p^2q}$ & $\Z_p^2\times\Z_q$ & $\G_F$ \\
		\hline
		$\Z_{p^2q}$ & $2$  &- &-  \\
		$\Z_p^2\times\Z_q$ &- & $2$ & $1$  \\
	\end{tabular}
	\caption{Enumeration of left braces of size $p^2q$ with $p=-1\pmod{q}$.}
\end{table}

\subsection{Left braces of cyclic type}\label{subsection:p=-1(q)_cyclic}
In this section we denote by $A$ the cyclic group $\Z_{p^2q}$. In this case, the size of $\pi_2(G)$ for a regular subgroup $G$ of $\Hol(A)$ divides $p$. Since in this case $q\neq 1\pmod{p}$, we can apply Theorem \ref{prop:cyclic_ker=pq} and so the enumeration of left braces of $A$-type is as in the following table.



\begin{table}[H]
	\centering
	\begin{tabular}{c|c}
		$|\ker{\lambda}|$ &  $\Z_{p^2q}$ \\
		\hline
		$pq$ & $1$  \\
		$p^2q$ & $1$
	\end{tabular}
	\caption{Number of left braces of $\Z_{p^2q}$-type of size $p^2q$ for $p=-1\pmod{q}$.}
\end{table}

\subsection{Left braces of $\mathbb{Z}_p^2\times \mathbb{Z}_q$-type}\label{subsection:p=-1(q)_abelian_non-cyclic}

Let $A=\mathbb{Z}_p^2\times \mathbb{Z}_q$ and let $G$ be a regular subgroup of $\Hol(A)$. Then the size of $\pi_2(G)$ divides $p^2q$ and $|\aut{A}|$, so divides $pq$ since $p=-1\pmod{q}$.

\begin{proposition}
	Let $G$ be a regular subgroup of $\Hol(A)$. Then $|\pi_2(G)|\neq pq$. 
\end{proposition}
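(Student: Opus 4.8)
The plan is to suppose, for contradiction, that $|\pi_2(G)|=pq$ and to show that this forces a subgroup of order $pq$ inside $GL_2(p)$, which cannot exist. First I would use that, as already observed, $|\pi_2(G)|$ divides $pq$. Projecting $\aut{A}\cong GL_2(p)\times \Z_q^\times$ onto its second factor, the image of $\pi_2(G)$ in $\Z_q^\times$ is a quotient of $\pi_2(G)$, so its order divides both $pq$ and $|\Z_q^\times|=q-1$. Now $q\nmid q-1$, and since $p,q$ are odd primes with $p=-1\pmod q$ we cannot have $p\mid q-1$: that would give $q\geq p+1$, while $q\mid p+1$ gives $q\leq p+1$, forcing $q=p+1$, impossible for two odd primes. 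Hence $\gcd(pq,q-1)=1$, the projection to $\Z_q^\times$ is trivial, and $\pi_2(G)$ is a subgroup of $GL_2(p)\times 1$ of order $pq$. (Note that regularity of $G$ is not actually used here; the obstruction is purely group-theoretic in $\aut{A}$.)

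It therefore suffices to prove that $GL_2(p)$ has no subgroup $H$ of order $pq$. Here I would invoke Sylow's theorems: the number $n_q$ of Sylow $q$-subgroups of $H$ divides $p$ and satisfies $n_q\equiv 1\pmod q$, and since $p=-1\pmod q$ with $q>2$ gives $p\neq 1\pmod q$, we get $n_q=1$. Thus $H$ has a normal subgroup $Q=\langle x\rangle$ of order $q$. Because $q\mid p+1$ and $q\nmid p-1$, the multiplicative order of $p$ modulo $q$ equals $2$, so the primitive $q$-th roots of unity lie in $\mathbb{F}_{p^2}\setminus\mathbb{F}_p$; consequently $x$ has no eigenvalue in $\mathbb{F}_p$, acts irreducibly on $\mathbb{F}_p^2$ (so $x$ is conjugate to a power of the companion matrix $F$), its minimal polynomial is an irreducible quadratic, and its centralizer $C_{GL_2(p)}(x)=\mathbb{F}_p[x]^\times\cong\mathbb{F}_{p^2}^\times$ has order $p^2-1$, which is coprime to $p$.

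Finally I would derive the contradiction. Choose $u\in H$ of order $p$. Since $Q\trianglelefteq H$, conjugation by $u$ induces an automorphism of $Q\cong\Z_q$ whose order divides both $p$ and $|\aut{Q}|=q-1$; as $\gcd(p,q-1)=1$, this automorphism is trivial, so $u$ centralizes $x$. But then $u\in C_{GL_2(p)}(x)$, a group of order $p^2-1$ prime to $p$, contradicting $|u|=p$. Hence no subgroup $H$ of order $pq$ exists in $GL_2(p)$, and therefore $|\pi_2(G)|\neq pq$. The main obstacle is the middle step: establishing that every element of order $q$ in $GL_2(p)$ acts irreducibly (which relies on $p=-1\pmod q$) and identifying its centralizer as a nonsplit torus of order coprime to $p$; once this representation-theoretic input is secured, the Sylow count and the coprimality $\gcd(p,q-1)=1$ close the argument.
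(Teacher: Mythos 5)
Your proof is correct, but it takes a genuinely different route from the paper's. The paper reasons about $G$ itself: since $G$ is regular it has order $p^2q$, and $\ker(\pi_2|_G)$ is then a normal subgroup of $G$ of order $p$, so $G\not\cong \G_F$ (which has no normal subgroup of order $p$, by irreducibility of $F$); on the other hand $G$ cannot be abelian, because then $\pi_2(G)$ would be an abelian subgroup of $\aut{A}$ of order $pq$, which the paper asserts (without proof) does not exist --- and since the abelian groups and $\G_F$ exhaust all groups of order $p^2q$ in this congruence case, this is a contradiction. You instead prove the stronger, purely group-theoretic statement that $\aut{A}\cong GL_2(p)\times\Z_q^\times$ has no subgroup of order $pq$ whatsoever: the projection to $\Z_q^\times$ is trivial since $\gcd(pq,q-1)=1$ (here you correctly use that $p\mid q-1$ and $q\mid p+1$ would force $q=p+1$), and inside $GL_2(p)$ the Sylow count ($p\not\equiv 1\pmod q$ because $p\equiv -1$ and $q>2$) makes the $q$-Sylow $\langle x\rangle$ normal, while $\gcd(p,q-1)=1$ forces any element of order $p$ to centralize $x$; this is impossible since $x$ acts irreducibly (as $q\nmid p-1$) and hence has centralizer $\mathbb{F}_{p^2}^\times$ of order $p^2-1$, prime to $p$. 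Your approach buys independence from the classification of groups of order $p^2q$ and from regularity of $G$, and it supplies exactly the justification the paper glosses over, strengthening ``$\aut{A}$ has no \emph{abelian} subgroup of order $pq$'' to ``no subgroup of order $pq$ at all''; the paper's argument is shorter, but only because it leans on the classification already cited throughout and leaves that centralizer-type fact unproved.
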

\begin{proof}
	Let $G$ be such a group, then $\ker{\pi_2}$ is a normal subgroup of size $p$ of $G$. Therefore $G$ is not isomorphic to $\G_F$, since it has no normal subgroup of size $p$. On the other hand, $G$ is not abelian since $\aut{A}$ have no abelian subgroups of order $pq$.
\end{proof}

\begin{lemma}
	There exists a unique left brace of $A$-type with $|\ker{\lambda}|=p^2$. It is given by $(B,+,\circ)$ where
	\begin{eqnarray}\label{formula_sub_abelian_q_p=-1}
	\begin{pmatrix} x_1 \\ x_2\\x_3 \end{pmatrix} + \begin{pmatrix} y_1 \\ y_2\\y_3 \end{pmatrix} =\begin{pmatrix} x_1+y_1 \\ x_2+y_2\\ x_3+y_3 \end{pmatrix},\qquad 
	\begin{pmatrix} x_1 \\ x_2 \\x_3\end{pmatrix} \circ \begin{pmatrix} y_1 \\ y_2 \\y_3 \end{pmatrix} 
=\begin{pmatrix} x_1 \\ x_2 \\x_3\end{pmatrix} \circ \begin{pmatrix} y_1 \\ y_2 \\y_3 \end{pmatrix} =\begin{pmatrix}  \begin{pmatrix}
x_1 \\x_2
\end{pmatrix} + F^{x_3}\begin{pmatrix} y_1  \\ y_2\end{pmatrix} \\ x_3 + y_3 \end{pmatrix}
%
	\end{eqnarray}
	for every $0\leq x_1,x_2,y_1,y_2\leq p-1,\, 0\leq x_3,y_3\leq q-1$. In particular, $(B,\circ)\cong \G_F$ and $B$ is a bi-skew brace. 
\end{lemma}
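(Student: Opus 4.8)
The plan is to follow the two-step strategy laid out in the Preliminaries, specializing it to a regular subgroup $G\leq\Hol(A)$ with $|\pi_2(G)|=q$. Since $|\ker\lambda|=p^2$, Remark \ref{remark for lambdas} forces $\ker{\pi_2|_G}$ to be the full $p$-Sylow subgroup $\langle\sigma,\tau\rangle$ of $A$, so the image $\pi_2(G)$ is cyclic of order $q$ generated by an automorphism of order $q$. By Remark \ref{action on q-Syl trivial} this automorphism acts trivially on the $q$-Sylow subgroup, so as an element of $\aut{A}\cong GL_2(p)\times\Z_q^\times$ it has trivial $\Z_q^\times$-component and its $GL_2(p)$-component is an element of order $q$.

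\textbf{Existence and identifying the automorphism.} First I would exhibit a concrete regular subgroup. Because $p=-1\pmod q$, the order-$q$ elements of $GL_2(p)$ are no longer diagonalizable over $\Z_p$ (contrast with Remark \ref{subgroups of GL}); instead they are conjugate to the companion matrix $F$ of the irreducible polynomial $\h=x^2+\k x+1$, which by hypothesis has order $q$. I would set
\[
H=\langle \sigma,\ \tau,\ \epsilon F\rangle,
\]
check regularity via Lemma \ref{regularity} by the same quotient argument as in Theorem \ref{cor_formula_abelian_pp} (map $\Hol(A)\to\Hol(A)/\langle\sigma,\tau\rangle$ and observe the image of an element of $(1\times\aut{A})\cap H$ must be trivial), and read off that the associated left brace has $\sigma,\tau\in\ker\lambda$ and $\epsilon\in\Fix(B)$.

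\textbf{Uniqueness up to conjugacy.} Next I would show every regular $G$ with $|\pi_2(G)|=q$ is conjugate to $H$. Since all order-$q$ subgroups of $GL_2(p)$ are conjugate to $\langle F\rangle$ (a single conjugacy class, as there is a single irreducible quadratic factor type giving order $q$ up to the Frobenius-induced identification), after conjugating by a matrix we may assume $\pi_2(G)=\langle F\rangle$. Then $G=\langle\sigma,\tau,\epsilon^a F\rangle$ with $a\neq 0$ by regularity, and conjugating by the automorphism of $\Z_q$ sending $\epsilon\mapsto\epsilon^{a^{-1}}$ reduces to $a=1$, giving $G=H$. The uniqueness of the conjugacy class of $\langle F\rangle$ is the one point requiring care, but it is standard: order-$q$ cyclic subgroups of $GL_2(p)$ with $p\equiv -1\pmod q$ correspond to the unique way (up to the Galois action $F\mapsto F^{p}=F^{-1}$) of realizing $\Z_q$ inside the nonsplit torus $\F_{p^2}^\times$.

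\textbf{Deriving the formula.} Finally, with $\sigma,\tau\in\ker\lambda$ and $\epsilon\in\Fix(B)$, I would apply Lemma \ref{ker fix}(ii) exactly as in Theorem \ref{cor_formula_abelian_pp}: writing $a=\sigma^{x_1}\tau^{x_2}\in\ker\lambda$ and $b=\epsilon^{x_3}\in\Fix(B)$, the product $\sigma^{x_1}\tau^{x_2}\epsilon^{x_3}\circ\sigma^{y_1}\tau^{y_2}\epsilon^{y_3}$ equals the additive sum with $\lambda_{\epsilon^{x_3}}=\pi_2(\epsilon F)^{x_3}$ applied to the second argument. Since $\pi_2(\epsilon F)=F$ acts on $\langle\sigma,\tau\rangle$ by the matrix $\begin{bmatrix}0&-1\\1&-\k\end{bmatrix}$ and trivially on $\langle\epsilon\rangle$, this yields precisely the $3\times 3$ block-diagonal matrix $\left[\begin{smallmatrix}0&-1&0\\1&-\k&0\\0&0&1\end{smallmatrix}\right]^{x_3}$ of \eqref{formula_sub_abelian_q_p=-1}. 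The main obstacle is the uniqueness-of-conjugacy-class step, since here—unlike the split case of Remark \ref{subgroups of GL}—one must argue directly about order-$q$ elements of $GL_2(p)$ when $q\nmid p-1$; everything else is a direct transcription of the method used in Theorem \ref{cor_formula_abelian_pp}.
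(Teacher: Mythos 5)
Your proposal is correct and follows essentially the same route as the paper's proof: reduce to $\pi_2(G)=\langle F\rangle$ up to conjugation, normalize the generator $\epsilon^a F$ to $\epsilon F$ by an automorphism of the $q$-Sylow, and read off the formula from $\sigma,\tau\in\ker\lambda$, $\epsilon\in\Fix(B)$ via Lemma \ref{ker fix}. The only difference is that you spell out two points the paper leaves implicit — the regularity check and the fact that all order-$q$ subgroups of $GL_2(p)$ are conjugate to $\langle F\rangle$ (via the nonsplit torus) — which is a faithful filling-in of the same argument rather than a different approach.
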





\begin{proof}
	Let $G$ be a regular subgroup of $\Hol(A)$ with $|\pi_2(G)|=q$. Any automorphism of order $q$ of $A$ acts trivially on the $q$-Sylow subgroup of $A$. Up to conjugation, we can assume that $\pi_2(G)$ is generated by $F$. 
	Therefore,
	$$G=\langle \sigma,\ \tau,\ \sigma^n \tau^m\epsilon^a F\rangle=\langle \sigma,\ \tau,\ \epsilon^a F\rangle
	$$
	where $a\neq 0$ since $G$ is regular. 
	We can conjugate $G$ by the automorphism taking $\epsilon^{a}$ to $\epsilon$ and fixing $\sigma$ and $\tau$, so we can assume that $a=1$. It is straightforward to verify that such group is regular. Let $B$ be the left brace associated to $G$. We have $\sigma,\tau\in \ker{\lambda}$ and $\epsilon\in \Fix(B)$ and then $B= \ker{\lambda}+\Fix(B)$. So, formula \eqref{formula_sub_abelian_q_p=-1} follows by Lemma \ref{ker fix}. According to \cite[Corollary 1.2]{skew_pq}, we have that \eqref{formula_sub_abelian_q_p=-1} defines a bi-skew brace.
\end{proof}

According to Theorem \ref{sub_abelian_p} there exists a unique non trivial left brace of size $p^2q$ with the $|\ker{\lambda}|=pq$. Hence we have the following enumeration:

\begin{table}[H]
	\centering
	\begin{tabular}{c|c|c}
		$|\ker{\lambda}|$ &   $\Z_p^2\times\Z_q$ &  $\G_F$ \\
		\hline
		$p^2$ &- & $1$ \\
		$pq$  & $1$ &- \\
		$p^2q$  & $1$ &-
	\end{tabular}
	\caption{Enumeration of left braces of $\Z_p^2\times\Z_q$-type for $p=-1\pmod{q}$.}
\end{table}

\section{Left braces of size $p^2q$ with $q=1 \pmod{p}$ and $q\neq 1 \pmod{p^2}$}\label{section:q=1(p)}

In this section we will assume that $q=1\pmod{p}$ but $q\neq 1\pmod{p^2}$. Also we assume $p>2$, and we deal with the case $p=2$ in Section \ref{4p (i)}. If we denote by $\r$ a fixed element of order $p$ in $\Z_q^\times$, then the non-abelian groups relevant for this section are the following, \cite[Proposition 21.17]{EnumerationGroups}:

\begin{itemize}
	\item[(i)] $\mathbb{Z}_{p}\times (\mathbb{Z}_{q}\rtimes_{\r} \mathbb{Z}_{p})=\langle \sigma,\tau,\epsilon\,|\, \sigma^p=\tau^p=\epsilon^q=1,\ [\epsilon,\tau]=[\tau, \sigma]=1,\, \sigma\epsilon\sigma^{-1}=\epsilon^{\r} \rangle$.
	\item[(ii)] $\mathbb{Z}_{q}\rtimes_{\r} \mathbb{Z}_{p^2}=\langle \sigma,\tau\, | \, \sigma^{p^2}=\tau^{q}=1,\, \sigma\tau\sigma^{-1}=\tau^{\r} \rangle$.
\end{itemize}{}




We summarize in the following table the enumeration of skew braces according to the additive and multiplicative isomorphism class of groups.

\begin{table}[H]
	\centering
	\begin{tabular}{c|c|c|c|c}
		$+ \backslash \circ$ & $\Z_{p^2q}$ & $\Z_q\rtimes_{\r}\Z_{p^2}$  & $\Z_p^2\times\Z_q$ & $\Z_p\times(\Z_q\rtimes_{\r}\Z_p)$ \\
		\hline
		$\Z_{p^2q}$ & $2$ & $p$ &- & -  \\
		$\Z_p^2\times\Z_q$ &- &- & $2$ & $4$   \\
	\end{tabular}
	\caption{Enumeration of left braces of size $p^2q$ with $q=1\pmod{p}$ and $q\neq 1\pmod{p^2}$.}
\end{table}

\subsection{Left braces of cyclic type}\label{subsection:q=1(p)_cyclic}

Let $A$ denote the cyclic group $\Z_{p^2q}$.
If $G$ is a regular subgroup of $\Hol(A)$ then $|\pi_2(G)|\in\{1,p,p^2\}$. 

The group $\aut{A}$ has a unique $p$-Sylow subgroup, which is generated by $\varphi_{p+1,1}$ and $\varphi_{1,\r}$, which is elementary abelian. So the subgroups of order $p$ in $\aut{A}$ are 
\begin{equation}\label{lem:aut_cyclic_subgroups_p}
\langle \varphi_{p+1,1} \rangle \qquad\text{and}\qquad \langle \varphi_{jp+1,\r}\rangle
\end{equation}
where $0\leq j\leq p-1$.

\begin{proposition}\label{prop:cyclic_ker=q_non-existence}
	Let $G$ be a regular subgroup of $\Hol(A)$. Then $|\pi_2(G)|\neq p^2$.
	\begin{proof}
		Let $G$ be such group. The image by $\pi_2$ of $G$ is the unique $p$-Sylow subgroup $\langle \varphi_{p+1,1},\ \varphi_{1,\r} \rangle$ of $\aut A$. The unique subgroup of $A$ of size $q$ is $\langle\tau\rangle$, therefore $G$ has the following standard presentation:
		\[
		G= \langle \tau,\ \sigma^a\varphi_{p+1,1},\ \sigma^b\varphi_{1,\r} \rangle.
		\]
		The group $\pi_2(G)$ is elementary abelian and so, by the \nc conditions we have $a,b=0\pmod p$. By regularity, $a,b\neq 0\pmod{p^2}$. Let $a=pa'$ and $b=pb'$ for some $1\leq a',b'\leq p-1$. Hence $(\sigma^a \varphi_{p+1,1})^{-1}(\sigma^b \varphi_{1,r})^{\frac{a'}{b'}}=\varphi_{p+1,1}^{-1}\varphi_{1,r}^{\frac{a'}{b'}}\in G$ and so $G$ is not regular, contradiction.
		
	\end{proof}
	
\end{proposition}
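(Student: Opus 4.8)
The plan is to argue by contradiction, exploiting that under the present hypotheses the $p$-Sylow subgroup of $\aut{A}$ is \emph{elementary} abelian and then feeding this fact into condition \nc. So suppose $G$ is regular with $|\pi_2(G)|=p^2$; in particular $|G|=|A|=p^2q$.

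First I would pin down the shape of such a $G$. Since $q=1\pmod p$ provides an element $\r$ of order $p$ in $\Z_q^\times$, while $q\neq 1\pmod{p^2}$ keeps the $p$-part of $\Z_q^\times$ of order exactly $p$, the $p$-Sylow subgroup of $\aut{A}\cong\Z_{p^2}^\times\times\Z_q^\times$ is the elementary abelian group $\langle\varphi_{p+1,1},\varphi_{1,\r}\rangle\cong\Z_p\times\Z_p$, and it is the unique subgroup of order $p^2$. Hence $\pi_2(G)$ must be this whole subgroup, and since $|G|=p^2q$ the kernel $\ker\pi_2|_G$ has order $q$; as $\langle\tau\rangle$ is the only subgroup of $A$ of order $q$, the standard presentation is forced to be
$$G=\langle\tau,\ \sigma^a\varphi_{p+1,1},\ \sigma^b\varphi_{1,\r}\rangle,$$
with the translation parts chosen modulo $\langle\tau\rangle$, i.e. as powers of $\sigma$.

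Next I would apply condition \nc to the relations $\varphi_{p+1,1}^p=1$ and $\varphi_{1,\r}^p=1$, which forces $(\sigma^a\varphi_{p+1,1})^p$ and $(\sigma^b\varphi_{1,\r})^p$ to lie in $\ker\pi_2|_G=\langle\tau\rangle$. Computing the $p$-th power in the holomorph, the translation part of $(\sigma^a\varphi_{p+1,1})^p$ is $\sigma^{a\sum_{k=0}^{p-1}(p+1)^k}$; since $(p+1)^k\equiv 1+kp\pmod{p^2}$ the exponent collapses to $ap\pmod{p^2}$, the quadratic correction $p^2(p-1)/2$ vanishing modulo $p^2$ precisely because $p$ is odd. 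Likewise $(\sigma^b\varphi_{1,\r})^p$ has translation part $\sigma^{bp}$. A nontrivial power of $\sigma$ can never lie in $\langle\tau\rangle$ (the orders $p^2$ and $q$ being coprime), so $\sigma^{ap}=\sigma^{bp}=1$, i.e. $a\equiv b\equiv 0\pmod p$.

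Finally I would close with a containment argument. With $a,b\equiv 0\pmod p$ every generator has its translation part in $\langle\sigma^p,\tau\rangle$, and this subgroup is stable under both $\varphi_{p+1,1}$ and $\varphi_{1,\r}$ (they fix $\sigma^p$ and preserve $\langle\tau\rangle$); therefore the elements of $\Hol(A)$ whose translation part lies in $\langle\sigma^p,\tau\rangle$ and whose automorphism part lies in $\pi_2(G)$ form a subgroup of $\Hol(A)$ containing $G$. Consequently $\pi_1(G)\subseteq\langle\sigma^p,\tau\rangle$, a proper subgroup of $A$ of index $p$, and Lemma \ref{regularity} then denies regularity of $G$, the desired contradiction. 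The one delicate point is the $p$-th power computation: I expect the main (if small) obstacle to be checking cleanly that the cross-terms in $\sum_{k=0}^{p-1}(p+1)^k$ die modulo $p^2$, which is exactly where the hypothesis $p>2$ enters and why the case $p=2$ is deferred to Section \ref{4p (i)}.
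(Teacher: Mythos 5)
Your proposal is correct and follows essentially the same route as the paper: identify $\pi_2(G)$ with the unique (elementary abelian) $p$-Sylow subgroup $\langle\varphi_{p+1,1},\varphi_{1,\r}\rangle$ of $\aut{A}$, write the standard presentation $G=\langle\tau,\ \sigma^a\varphi_{p+1,1},\ \sigma^b\varphi_{1,\r}\rangle$, use condition \nc to force $a\equiv b\equiv 0\pmod p$, and conclude $\pi_1(G)\subseteq\langle\sigma^p,\tau\rangle$, contradicting Lemma \ref{regularity}. You merely make explicit the $p$-th power computation (where $p>2$ is used) that the paper leaves implicit.
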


\begin{proposition}\label{prop:cyclic_ker=pq_2}
	The left braces of cyclic type of size $p^2q$ with $|\ker{\lambda}|=pq$ are $B_{(j,k)}=(A,+,\circ)$ for $(j,k)\in \{(1,0)\}\cup \setof{(j,1)}{0\leq j\leq p-1}$ where
	\begin{equation*}\begin{pmatrix} n \\ m \end{pmatrix} + \begin{pmatrix} s \\ t \end{pmatrix} =\begin{pmatrix}  n+s \\ m+t \end{pmatrix}, \qquad\qquad \begin{pmatrix} n \\ m \end{pmatrix} \circ \begin{pmatrix} s \\ t \end{pmatrix} =\begin{pmatrix}  n+(jnp+1)s \\ m+\r^{kn}t \end{pmatrix}
	\end{equation*}
	for every $0\leq n,s\leq p^2-1$ and $0\leq m,t\leq q-1$. In particular, 
	$$(B_{(j,k)},\circ)\cong \begin{cases}\mathbb{Z}_{p^2q}\, \text{ if } (j,k)=(1,0),\\
	\mathbb{Z}_{q}\rtimes_{\r} \mathbb{Z}_{p^2}, \, \text{otherwise.}\end{cases}$$
\end{proposition}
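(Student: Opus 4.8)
The plan is to classify, up to conjugacy by $\aut{A}$, the regular subgroups $G\le\Hol(A)$ with $|\pi_2(G)|=p$, since by Remark~\ref{remark for lambdas} these are exactly the ones whose associated brace has $|\ker\lambda|=\tfrac{p^2q}{p}=pq$, and Theorem~\ref{thm:skew_holomorph} converts orbits of such subgroups into isomorphism classes of braces. As $A$ is cyclic it has a unique subgroup of order $pq$, so $\ker\pi_2|_G=\langle\sigma^p,\tau\rangle$ is forced, while $\pi_2(G)$ must be one of the $p+1$ subgroups of order $p$ listed in~\eqref{lem:aut_cyclic_subgroups_p}, namely $\langle\varphi_{p+1,1}\rangle$ or $\langle\varphi_{jp+1,\r}\rangle$ with $0\le j\le p-1$. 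Writing the standard presentation $G=\langle\sigma^p,\tau,\sigma^a\alpha\rangle$ (absorbing the $\tau$-part of the third generator into the kernel) and invoking Lemma~\ref{regularity}, regularity forces $p\nmid a$, i.e. $1\le a\le p-1$.

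First I would check that conditions \NC and \nc impose nothing further. Every $\alpha$ in the list satisfies $\alpha(\sigma)=\sigma^c$ with $c\equiv 1\pmod p$ and sends $\tau$ into $\langle\tau\rangle$, so $\langle\sigma^p,\tau\rangle$ is $\alpha$-stable and \NC holds automatically. For \nc one computes $(\sigma^a\alpha)^p=\sigma^{a\sum_{i=0}^{p-1}c^i}\alpha^p$; since each $\alpha$ has order $p$ we have $\alpha^p=\id{A}$, and writing $c=1+dp$ gives $\sum_{i=0}^{p-1}c^i\equiv p\pmod{p^2}$ (the $p$ odd hypothesis killing the second-order term), so that $(\sigma^a\alpha)^p=\sigma^{ap}\in\langle\sigma^p\rangle$. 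Hence every such $G$ is regular, and the entire family $\{\langle\sigma^p,\tau,\sigma^a\alpha\rangle:1\le a\le p-1\}$ must be reduced.

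For the reduction I would use that $\aut{A}\cong\Z_{p^2}^\times\times\Z_q^\times$ is abelian: conjugation by $\varphi_{i,\ell}\in 1\times\aut{A}$ fixes $\pi_2(G)$ setwise, so the subgroup $\pi_2(G)$ is itself a conjugacy invariant and the $p+1$ choices produce pairwise non-conjugate, hence non-isomorphic, braces. Within a fixed $\pi_2(G)$, conjugation by $\varphi_{i,\ell}$ carries $\sigma^a\alpha$ to $\sigma^{ia}\alpha$, i.e. $a\mapsto ia\pmod p$; letting $i$ range over $\Z_{p^2}^\times$ (so $i\bmod p$ over $\Z_p^\times$) collapses every $a$ to $a=1$, leaving a single orbit. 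This yields exactly $p+1$ braces, indexed by $(j,k)\in\{(1,0)\}\cup\{(j,1):0\le j\le p-1\}$ according as $\alpha=\varphi_{p+1,1}$ (whence $c=p+1$, trivial action on $\tau$) or $\alpha=\varphi_{jp+1,\r}$.

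Finally I would read off the multiplication and the multiplicative type with $a=1$. Inverting $\pi_1$ on $\sigma^n\tau^m$ shows that the exponent of the third generator is $\equiv n\pmod p$, whence $\lambda_{\sigma^n\tau^m}=\lambda_\sigma^{\,n}=\varphi_{jnp+1,\,\r^{kn}}$ using $(jp+1)^n\equiv jnp+1\pmod{p^2}$; then $a\circ b=a+\lambda_a(b)$ from Remark~\ref{remark for lambdas} gives the displayed formula. The generator $x=\sigma\alpha$ satisfies $x^p=\sigma^p\ne 1$, so it has order $p^2$, while $x\tau x^{-1}=\tau^{\r^{k}}$; thus $(B,\circ)=\langle x,\tau\rangle$ is cyclic $\Z_{p^2q}$ when $k=0$ and $\Z_q\rtimes_{\r}\Z_{p^2}$ when $k=1$. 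I expect the explicit-formula step to be the main obstacle: one has to carry the geometric-sum expansions $(jp+1)^i\pmod{p^2}$ accurately through the computation of $\pi_1^{-1}$ to pin down the generator exponent modulo $p$, which is precisely what produces $\lambda_{\sigma^n\tau^m}=\varphi_{jnp+1,\r^{kn}}$; the rest is bookkeeping.
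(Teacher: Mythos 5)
Your proposal is correct and follows essentially the same route as the paper: both classify the regular subgroups with $|\pi_2(G)|=p$ via the standard presentation $\langle \sigma^p,\tau,\sigma^a\alpha\rangle$ with kernel the unique subgroup of order $pq$, normalize $a=1$ by conjugating with $\varphi_{a^{-1},1}$, distinguish the $p+1$ classes by the conjugation-invariant subgroup $\pi_2(G)$ (using that $\aut{A}$ is abelian), and obtain the formula from $\lambda_{\sigma^n\tau^m}=\lambda_\sigma^n=\varphi_{jnp+1,\r^{kn}}$. The only cosmetic difference is that the paper delegates the case $\pi_2(G)=\langle\varphi_{p+1,1}\rangle$ to its Theorem~\ref{prop:cyclic_ker=pq}, while you treat it uniformly with the other $p$ cases and additionally spell out the regularity check and the identification of $(B,\circ)$, which the paper omits by convention.
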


\begin{proof}
	The groups
	\begin{equation*}
		H=\langle \sigma^p,\ \tau,\ \sigma\varphi_{p+1,1}\rangle \cong \Z_{p^2q}, \quad G_{j}=\langle \sigma^p,\ \tau,\ \sigma\varphi_{jp+1,r}\rangle \cong \mathbb{Z}_{q}\rtimes_{\r} \mathbb{Z}_{p^2},
	\end{equation*}
	for $0\leq j\leq p-1$ are regular subgroups and they are not conjugate, since their image under $\pi_2$ are not.

	Let $G$ be a regular subgroup with $|\pi_2(G)|=p$, then $\pi_2(G)$ is one of the groups in \eqref{lem:aut_cyclic_subgroups_p}.
	If $\pi_2(G)=\langle \varphi_{p+1,1}\rangle$ we can argue as in the proof of Theorem \ref{prop:cyclic_ker=pq} and the associate brace is $B_{1,0}$.

	Assume that $\pi_2(G)=\langle \varphi_{jp+1,\r}\rangle$ for some $0\leq j\leq p-1$. Then the kernel of $\pi_2$ is the unique subgroup of order $pq$, namely $\langle\sigma^p,\tau\rangle$ and so $G$ has the following standard presentation:
	\[
	G=\langle \sigma^p,\ \tau,\ \sigma^a\varphi_{jp+1,\r}\rangle,
	\]
	where $1\leq a\leq p-1$ since $G$ is regular. Then, $G$ is conjugate to $G_j$ by $\varphi_{a^{-1},1}$.
	Let $B_{j,1}$ be the left brace associate to $G_j$. Then, since $\langle\tau,\sigma^p\rangle\leq \ker{\pi_2}$ and $\underbrace{\sigma\circ\sigma\circ\ldots\circ\sigma}_n=\sigma^{\frac{n(n-1)p}{2}+n}=\sigma^{\frac{n(n-1)p}{2}}\circ \sigma^n$ we have that 
	$$\lambda_{\sigma^n\tau^m}=\lambda_{\tau^m\circ \sigma^n}=\lambda_{\sigma^n}=\lambda_{\sigma}^n=\varphi_{jp+1,\r}^n $$
	and so the formula in the statement follows.
\end{proof}

We summarize the results of this subsection in the following table:

\begin{table}[H]
	\centering
	\begin{tabular}{c|c|c}
		$\ker{\lambda}$ &  $\mathbb{Z}_{p^2q}$  &  $\Z_q\rtimes_{\r}\Z_{p^2}$  \\
		\hline
		$pq$ & $1$ & $p$ \\
		$p^2q$ & $1$ &-
	\end{tabular}
	\caption{Enumeration of left braces of cyclic type of size $p^2q$ for $q=1\pmod{p}$ and $q\neq 1\pmod{p^2}$.}
\end{table}

\subsection{Left braces of $\mathbb{Z}_p^2\times \mathbb{Z}_q$-type}\label{subsection:q=1(p)_abelian_non-cyclic}

In this section we denote by $A$ the group $\mathbb{Z}_p^2\times \mathbb{Z}_q$ and by $C$ the matrix defined in Remark \ref{subgroups of GL}.

\begin{proposition}\label{sub_abelian_p2}
	The left braces of $A$-type with $|\ker{\lambda}|=pq$ are $(B_{i,j},+,\circ)$ where $0\leq i,j\leq 1$ and $(i,j)\neq (0,0)$, where
	\begin{equation*}\begin{pmatrix} n \\ m \\ l \end{pmatrix} + \begin{pmatrix} s \\ t \\ u\end{pmatrix} =\begin{pmatrix}  n+s \\ m+t \\ l+u \end{pmatrix}, \qquad\qquad \begin{pmatrix} n \\ m \\ l\end{pmatrix} \circ \begin{pmatrix} s \\ t \\ u\end{pmatrix} =\begin{pmatrix}  n+s+jmt \\ m+t \\ l+\r^{im}u  \end{pmatrix},
	\end{equation*}
	for every $0\leq n,m,s,t\leq p-1$ and $0\leq l,u\leq q-1$. In particular, 
	$$(B_{i,j},\circ)\cong \begin{cases} \mathbb{Z}_p^2\times \mathbb{Z}_q,\, \text{ if } (i,j)=(0,1),\\
	\mathbb{Z}_p\times (\mathbb{Z}_q\rtimes_{\r} \mathbb{Z}_p),\, \text{otherwise.}\end{cases}$$ 
\end{proposition}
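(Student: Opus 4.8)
By Remark~\ref{remark for lambdas}, a regular subgroup $G\le\Hol(A)$ gives a left brace with $|\ker\lambda|=pq$ exactly when $|\pi_2(G)|=p$, so the plan is to run the two--step strategy of Section~\ref{Sec Pre} on the order--$p$ subgroups of $\aut A\cong GL_2(p)\times\Z_q^\times$. The first task is to list these up to conjugacy. Since $q=1\pmod p$ but $q\neq1\pmod{p^2}$, the $p$--Sylow subgroup of $\Z_q^\times$ is exactly $\langle\r\rangle$, while every order--$p$ element of $GL_2(p)$ is unipotent and hence conjugate to $C$ (Remark~\ref{subgroups of GL}). An order--$p$ element of $\aut A$ is therefore $C^a\r^b$ with $(a,b)\neq(0,0)$, and I expect the three conjugacy classes $\langle C\rangle$, $\langle\r\rangle$ and $\langle C\r\rangle$, matching the parameters $(i,j)=(0,1),(1,0),(1,1)$. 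The one delicate point here is that all the ``diagonal'' groups $\langle C\r^b\rangle$ with $b\neq0$ collapse into a single class: conjugating by $\mathrm{diag}(c,1)$ sends $C$ to $C^c$ and fixes the scalar part, so $\langle C\r^b\rangle$ is conjugate to $\langle C\r^{bc^{-1}}\rangle$ for every $c\neq0$. The three classes are pairwise non--conjugate because conjugation respects the projections of $\aut A$ onto its two direct factors.

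For a generator $\alpha$ of each class I would write the standard presentation $G=\langle K,\ u\alpha\rangle$ with $K=\ker\pi_2|_G$ of order $pq$. A subgroup of $A$ of order $pq$ is $L\times\langle\epsilon\rangle$ for a line $L\le\langle\sigma,\tau\rangle$, and condition~\NC forces $K$ to be $\alpha$--invariant, since conjugation by $u\alpha$ acts on $K$ as $\alpha$. When the matrix part of $\alpha$ is $C$ this pins $L$ to the unique $C$--invariant line $\langle\sigma\rangle$; when the matrix part is trivial every line is invariant and the part of the normaliser of $\langle\alpha\rangle$ lying in $GL_2(p)$ lets me normalise $L$ to $\langle\sigma\rangle$ as well. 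Absorbing the $\sigma$-- and $\epsilon$--parts of $u$ into $K$ and using the remaining normaliser to rescale the $\tau$--exponent, I may take $u=\tau$, so $G=\langle\sigma,\ \epsilon,\ \tau\alpha\rangle$. Condition~\nc asks that $(\tau\alpha)^p\in K$; since $(\tau\alpha)^p=(\sigma^{j\binom p2}\tau^p,\alpha^p)=(\sigma^{j\binom p2},1)$ and $\binom p2\equiv0\pmod p$ for $p$ odd, this holds, and counting $\pi_1$--images gives $\pi_1(G)=A$, so $G$ is regular by Lemma~\ref{regularity}.

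It remains to read off the operation and the isomorphism types. From $\ker\lambda=\pi_1(K)=\langle\sigma,\epsilon\rangle$ one has $\sigma,\epsilon\in\ker\lambda$, and the computation $(\tau\alpha)^m=(\sigma^{j\binom m2}\tau^m,\alpha^m)$ together with Lemma~\ref{ker fix} (by the same device as in Proposition~\ref{ref for reg}) yields $\lambda_{\sigma^n\tau^m\epsilon^l}=\alpha^m$. Writing $\alpha=C^j\r^i$ and evaluating $\alpha^m$ on $\sigma^s\tau^t\epsilon^u$, so that $\tau^t\mapsto\sigma^{jmt}\tau^t$ and $\epsilon^u\mapsto\epsilon^{\r^{im}u}$, the identity $x\circ y=x+\alpha^m(y)$ becomes exactly the formula in the statement. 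The three braces are non--isomorphic because their $\pi_2$--images fall in the three distinct conjugacy classes above; inspecting the formula, $(B_{0,1},\circ)$ is abelian with no element of order $p^2$, hence $\cong A$, whereas for $(i,j)\neq(0,1)$ the factor $\langle\sigma\rangle$ is central and its complement $\langle\tau,\epsilon\rangle$ satisfies $\tau\circ\epsilon\circ\tau^{-1}=\epsilon^{\r}$ (the stray powers of $\sigma$ cancelling since $\binom p2\equiv0$), giving $(B_{i,j},\circ)\cong\Z_p\times(\Z_q\rtimes_{\r}\Z_p)$. I expect the conjugacy bookkeeping of the first two paragraphs—collapsing the diagonal subgroups and forcing the kernel line—to be the main obstacle, the subsequent verifications being the routine ones already carried out in Theorem~\ref{cor_formula_abelian_pp} and Proposition~\ref{ref for reg}.
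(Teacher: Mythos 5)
Your proof is correct and follows essentially the same route as the paper: the same three conjugacy classes $\langle C\rangle$, $\langle\r\rangle$, $\langle C\r\rangle$ of order-$p$ subgroups of $\aut{A}$, the same reduction of the standard presentation to $\langle\sigma,\ \epsilon,\ \tau\alpha\rangle$ via conditions \NC and \nc together with conjugation by scalar matrices, and the same computation $\lambda_{\sigma^n\tau^m\epsilon^l}=\alpha^m$ yielding the stated formula and multiplicative groups. The only cosmetic divergences are that for $\pi_2(G)=\langle C\rangle$ the paper defers to the splitting argument of Theorem~\ref{sub_abelian_p} instead of treating that case uniformly with the other two, and that you spell out some details (the collapse of the classes $\langle C\r^b\rangle$, the regularity and \nc checks) which the paper leaves implicit.
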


\begin{proof}
	Let $G\leq \Hol(A)$ be a regular subgroup with $|\pi_2(G)|=p$. Then, up to conjugation, the image $\pi_2(G)$ is generated by $\r^i C^j$, for $i,j\in \{0,1\}$ and $(i,j)\neq (0,0)$. The kernel of $\pi_2|_G$ has order $pq$, so $\ker{\pi_2|_G}=\langle \epsilon, v\rangle$ for some $v\in \langle \sigma,\tau\rangle$. Therefore we have the following standard presentation:
	$$G_{i,j}=\langle v,\ \epsilon,\ u\r^i C^j\rangle,$$
	for some $u\in \langle \sigma,\tau\rangle$. If $(i,j)=(0,1)$ we can argue as in the proof of Theorem \ref{sub_abelian_p} and then the formula for this case follows.
	
	Assume that $(i,j)=(1,0)$. Then, up to conjugation by an element of $GL_2(p)$ we can assume that $v=\sigma$ and $u=\tau$.
	
	Consider now the case $(i,j)=(1,1)$. By condition \NC we have $v\in \langle \sigma\rangle$ and then 
	$$G_{1,1}=\langle \sigma,\ \epsilon,\ \tau^a\r C\rangle,$$
	for $a\neq 0$. We can assume $a=1$, otherwise we conjugate by $a^{-1}I$.
	
	Let $B_{i,j}$ be the skew brace associated to $G_{i,j}$. In the last two cases, we have that $\lambda_{\sigma^n\tau^m\epsilon^l}=\lambda_{\tau^m}=\lambda_\tau^m$, since $\sigma,\epsilon\in \ker{\pi_2}$ and $\lambda_\tau(\tau)=\tau\pmod {\langle\sigma\rangle}$. Then the claim follows.

\end{proof}{}

\begin{proposition}\label{sub_abelian_pp2} 
	Let $w$ a fixed quadratic non residue modulo $p$. 
	The left braces of $A$-type with $|\ker{\lambda}|=q$ are $(B_s,+,\circ)$ for $s\in \{1,w\}$, where
	\[
	\begin{pmatrix}
	n \\ m \\ l
	\end{pmatrix} + \begin{pmatrix}
	x \\ y \\ z
	\end{pmatrix} = \begin{pmatrix}
	n+x \\ m+y \\ l+z
	\end{pmatrix}, \quad
	\begin{pmatrix}
	n \\ m \\ l
	\end{pmatrix} \circ
	\begin{pmatrix}
	x \\ y \\ z
	\end{pmatrix} =
	\begin{pmatrix}
	n+x+yms^{-1} \\ m+y \\ l+ z \r^{n-m\frac{m-s}{2s}}
	\end{pmatrix},
	\]
	for every $0\leq n,m,x,y\leq p-1$ and $0\leq l,z\leq q-1$. In particular, $(B_s,\circ)\cong \Z_p \times (\Z_q\rtimes_{\r} \Z_p)$.
\end{proposition}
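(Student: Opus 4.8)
The plan is to follow the now-familiar two-step strategy, adapted to the case $|\pi_2(G)|=p$ with $\ker\lambda$ of order $q$, which forces $\pi_2|_G$ to be \emph{injective} on the kernel's complement in a strong sense. First I would determine which subgroups of order $p$ in $\aut{A}=GL_2(p)\times\Z_q^\times$ can occur as $\pi_2(G)$ when $\ker{\pi_2|_G}$ has order $q$. Since $|\ker{\pi_2|_G}|=q$ equals the $q$-Sylow of $A$, we have $\ker{\pi_2|_G}=\langle\epsilon\rangle$, and the standard presentation must take the form $G=\langle\epsilon,\ v_1\alpha_1,\ v_2\alpha_2\rangle$ where $\alpha_1,\alpha_2$ generate the rank-two image $\pi_2(G)\leq GL_2(p)\times\Z_q^\times$. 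By Remark \ref{action on q-Syl trivial} the $\Z_q^\times$-component of any such generator is trivial, so $\pi_2(G)$ lands in $GL_2(p)$; being elementary abelian of order $p$ (generated by $C$ up to conjugation by Remark \ref{subgroups of GL}) this is too small, so I expect instead that the relevant $G$ has $\pi_2(G)$ of order $p$ acting nontrivially and the two $\sigma,\tau$-generators carry an $\epsilon$-component, mixing the additive $\Z_q$ into the multiplication. This mixing is what produces the $\r$-exponent in the displayed formula.

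Second, for each admissible image I would write the standard presentation $G=\langle\epsilon^{?},\ \sigma^{a}\tau^{b}\epsilon^{c}C,\ \ldots\rangle$, impose conditions \NC and \nc to constrain the exponents, and then reduce to normal form by conjugating with the normalizer of $\pi_2(G)$ inside $\aut{A}$. The conjugating elements available are $GL_2(p)$-matrices normalizing $\langle C\rangle$ (upper-triangular matrices) together with $\Z_q^\times$; these let me scale and shift the $\sigma,\tau$-exponents. The parameter $s$ in the statement, taking the two values $1$ and $w$ for a fixed quadratic non-residue $w$, should emerge exactly here: the conjugation action of the normalizer on the relevant exponent is by squares, so two orbits survive, indexed by whether a discriminant-type quantity is a square or a non-residue modulo $p$. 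Establishing that these two groups are genuinely non-conjugate (not merely that normal forms differ) is the first delicate point, and I would handle it by exhibiting a conjugation-invariant, e.g. the quadratic character of a suitable ratio of exponents read off from the commutator relations.

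Once the representative regular subgroups are pinned down, computing the multiplication formula is routine via Lemma \ref{ker fix}. Here $\epsilon\in\ker\lambda$ while $\sigma,\tau$ generate a complement, so I would compute the $\circ$-powers of $\sigma$ and $\tau$ using Lemma \ref{ker fix}(i)—accounting for the $C$-unipotent part that contributes the $ms^{-1}$ and the $\binom{m}{2}$ terms—and then read $\lambda_{\sigma^n\tau^m\epsilon^l}=\lambda_\sigma^{\,n}\lambda_\tau^{\,m}$ off the presentation. The $\Z_q^\times$-exponent of this product, namely $\r^{n+m\frac{s-1}{2s}-\frac{m(m-1)}{2}}$, is precisely the accumulated $\r$-power from carrying the $\epsilon$-components through the non-abelian multiplication of the $\sigma,\tau$ block; verifying that this exponent is well-defined modulo $p$ (so that $\r^{(\cdot)}$ makes sense, using $q\equiv1\pmod p$) is the bookkeeping that requires care. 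Finally, the identification $(B_s,\circ)\cong\Z_p\times(\Z_q\rtimes_{\r}\Z_p)$ follows by inspecting the presentation of $G$ against the group list at the start of the section.

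\medskip

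\textbf{Main obstacle.} The hard part will be the orbit count producing exactly two braces $B_1,B_w$: I must show that after imposing \NC and \nc the surviving parameter lives in $\Z_p^\times$ modulo the square action of the available normalizer, giving the two classes indexed by $\{1,w\}$, and then prove these two are not conjugate in $\Hol(A)$ by a genuine invariant rather than a choice of normal form. The $\r$-exponent formula is then essentially a consequence of Lemma \ref{ker fix}, modulo careful tracking of the quadratic term $\frac{m(m-1)}{2}$ coming from the unipotent $C$.
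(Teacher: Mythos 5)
Your overall two-step template (find the admissible images, normalize by the normalizer, read off the formula via Lemma \ref{ker fix}), and in particular your expectation that the parameter $s\in\{1,w\}$ arises from a squares-action of the normalizer with non-conjugacy certified by a quadratic-character invariant, does parallel what the paper actually does. But the structural analysis underneath is wrong, and it is not a repairable slip: by Remark \ref{remark for lambdas}, $|\ker{\lambda}|=q$ forces $|\pi_2(G)|=p^2q/q=p^2$, yet you end up working with $\pi_2(G)$ of order $p$ --- which is the case $|\ker{\lambda}|=pq$, i.e.\ Proposition \ref{sub_abelian_p2}, not this one. The step that derails you is the appeal to Remark \ref{action on q-Syl trivial}: that remark lives under the standing hypothesis of Section \ref{section:p=1(q)} (where $p\nmid q-1$) and fails in the present section, precisely because $q=1\pmod p$ means $\Z_q^\times$ contains the element $\r$ of order $p$. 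The correct conclusion is the opposite of yours: up to conjugation $\pi_2(G)=\langle C,\r\rangle\leq GL_2(p)\times\Z_q^\times$ has order $p^2$ and acts nontrivially on the $q$-Sylow through $\r$. Your fallback --- an order-$p$ image with $\epsilon$-components on the generators ``mixing in'' the $\Z_q$-part --- cannot work for two reasons: it contradicts $|\ker{\lambda}|=q$ as just noted, and the $\epsilon$-components of the elements $u_i$ in a standard presentation are coset representatives modulo $\ker{\pi_2|_G}=\langle\epsilon\rangle$, so they can be absorbed into the kernel and contribute nothing. The $\r$-exponent in the multiplication comes solely from the $\Z_q^\times$-component of the image; in the paper's representatives $G_s=\langle \epsilon,\ \tau^s C,\ \sigma\r\rangle$ one has $\lambda_\sigma=\r$.

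There is a second, independent gap in your formula derivation. You propose to read off $\lambda_{\sigma^n\tau^m\epsilon^l}=\lambda_\sigma^n\lambda_\tau^m$ via Lemma \ref{ker fix}(i), but that lemma requires $\lambda_b(b)=b$. This holds for $b=\sigma$ (since $\lambda_\sigma=\r$ fixes the $p$-part) and fails for $b=\tau$: here $\lambda_\tau$ has a nontrivial unipotent component $C^{1/s}$, and $C^{1/s}(\tau)=\sigma^{1/s}\tau\neq\tau$. Consequently the additive power $\tau^m$ is not a $\circ$-power of $\tau$, and $\lambda_{\tau^m}\neq\lambda_\tau^m$; the two differ by a power of $\r$ quadratic in $m$, which is exactly the term your formula would get wrong. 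This is why the paper needs the ad hoc identity \eqref{to use here}, rewriting the additive power $\tau^l$ as a $\circ$-product of copies of $\tau^s$ corrected by the $\circ$-inverse of a power of $\sigma$, yielding $\lambda_{\tau^l}=\r^{-\frac{l(l-s)}{2s}}C^{l/s}$; alternatively one can compute $(\pi_1|_{G_s})^{-1}(\sigma^n\tau^m\epsilon^l)$ directly in $\Hol(A)$. So: your orbit-counting and non-conjugacy plan can be kept, but the proof must restart from $|\pi_2(G)|=p^2$ with image $\langle C,\r\rangle$, and the $\lambda$ of an additive generic element must be computed honestly rather than by formally multiplying $\lambda_\sigma^n\lambda_\tau^m$.
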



\begin{proof}
	The groups 
	$$G_s=\langle \epsilon,\ \tau^s C,\ \sigma \r \rangle\cong \Z_p \times (\Z_q\rtimes_{\r} \Z_p)$$
	for $s\in \{1,w\}$ are regular subgroups of $\Hol(A)$ with $|\pi_2(G_s)|=p^2$ and they are not conjugated. 
	Indeed, if they are conjugate by $h$, then $h\in N_{\aut{A}}(C)$ the normalizer of $C$ in $\aut{A}$, say $$h=\begin{bmatrix} x & y \\ 0 & z\end{bmatrix}r^a.$$  
	So we have that
	$h\tau C h^{-1}  = \sigma^y\tau^z C^{\frac{x}{z}}$ and 
	$h \sigma r h^{-1} =\sigma^x r$ belong to $G_w$.
	Then $x=1$ and therefore $\sigma^y\tau^z C^{\frac{1}{z}}=(\tau^wC)^{\frac{1}{z}}$. Comparing the powers of $\tau$ in both sides, we have $w=z^2$, a contradiction.
	
	Let $G\leq \Hol(A)$ be a regular subgroup and let $|\pi_2(G)|=p^2$. Then, up to conjugation, we have that $\pi_2(G)=\langle C,r\rangle$. Hence the group $G$ has the standard presentation
	$$G=\langle \epsilon,\ u C,\ v \r\rangle,$$
	for some $u,v\in \langle \sigma,\tau\rangle$. By the \nc condition, the last two generators commute modulo $\langle\epsilon\rangle$ and this implies that $v\in \langle \sigma\rangle$. Therefore $G$ has the form
	$$G_u=\langle \epsilon,\ u C,\ \sigma^m \r\rangle$$
	where $m\neq 0$ and so conjugation by $m^{-1}I$ allows us to assume $m=1$.
	
	Let assume that $u=\sigma^s$. Since $G_u$ is regular, $s\neq  0$ and then $(\sigma^s C)^{-s^{-1}}\sigma r =C^{-s^{-1}}r\in G$. Hence, by Lemma \ref{regularity}, $G_u$ is not regular, contradiction.
	
	If $u=\sigma^s\tau^t$ for some $t\neq 0$, we can conjugate by $C^{-\frac{s}{t}}$ to assume that $u=\tau^t$. If $t=z^2$ for some $z$ then $G_{\tau^{z^2}}$ is conjugated to $G_{1}$ by $h$ where
	$$h=\begin{bmatrix} 1 & 2^{-1}(1-z^{-1})\\ 0 & z^{-1}\end{bmatrix}.$$
	Otherwise, we can write $t=w z^2$ for some $z$ where $w$ is a quadratic non residue modulo $p$, then $hG_{\tau^t}h^{-1}=G_{w}$ where $h$ is as above.

	Let $B_s$ be the skew brace associated to $G_s$. Then $\lambda_\sigma|_{\langle\sigma,\tau\rangle}=$id$|_{\langle\sigma,\tau\rangle}$ and so $\lambda_{\sigma^n}=\lambda_\sigma^n$ according to Lemma \ref{ker fix}. Moreover we have that
	\begin{equation}\label{to use here}
	\underbrace{\tau^s\circ \cdots \circ \tau^s}_m=\sigma^{s\frac{m(m-1)}{2}}\tau^{ms}=\sigma^{s\frac{m(m-1)}{2}}\circ \tau^{ms}\quad \Longleftrightarrow \quad \tau^m=\tau^{\frac{m}{s}s}=\left( \sigma^{\frac{m(m-s)}{2s}}\right)^\prime \circ\underbrace{ \tau^s\circ\cdots\circ \tau^s}_{\frac{m}{s}} 
	\end{equation}
	for every $m\in \mathbb{N}$. Hence, using \eqref{to use here} and that $\lambda_\sigma(\tau)=\tau$ it follows that 
	\begin{align*}
		\lambda_{\epsilon^l \sigma^n \tau^m}&=\lambda_{\epsilon^l \circ \sigma^n \circ \tau^m}=\lambda_{\sigma}^n\lambda_{\tau^m}    \\
		&= r^n r^{-\frac{m(m-s)}{2s}} C^{\frac{m}{s}}=r^{n-\frac{m(m-s)}{2s}} C^{\frac{m}{s}}.
	\end{align*}
	Hence, the formula for the operation $\circ$ follows.

\end{proof}

We summarize the content of this subsection in the following table:

\begin{table}[H]
	\centering
	\begin{tabular}{c|c|c}
		$|\ker{\lambda}|$ &  $\mathbb{Z}_p^2\times\Z_q$  &  $\Z_p\times(\Z_q\rtimes_{\r}\Z_p)$  \\
		\hline
		$q$ &- & $2$ \\
		$pq$ & $1$ & $2$ \\
		$p^2q$ & $1$ &-
	\end{tabular}
	\caption{Enumeration of left braces of $\Z_p^2\times\Z_q$-type of size $p^2q$ for $q=1\pmod{p}$ and $q\neq 1\pmod{p^2}$.}\label{table enum abelian not cyclic q=1 (p)}
\end{table}

\section{Left braces of size $p^2q$ with $q=1\pmod{p^2}$}\label{section:q=1(p^2)}

In this section we assume that $q=1\pmod{p^2}$ where $p$ is an odd prime. We will denote by $h$ a fixed element of order $p^2$ in $\Z_q^\times$. Accordingly, we have the following non-abelian groups of size $p^2q$ \cite[Proposition 21.17]{EnumerationGroups}:

\begin{itemize}
	\item[(i)] $\mathbb{Z}_{p}\times (\mathbb{Z}_{q}\rtimes_{h^p} \mathbb{Z}_{p})=\langle \sigma,\tau,\epsilon\,|\, \sigma^p=\tau^p=\epsilon^q=1,\ [\epsilon,\tau]=[\tau, \sigma]=1,\, \sigma\epsilon\sigma^{-1}=\epsilon^{h^p}\rangle$,
	\item[(ii)] $\mathbb{Z}_{q}\rtimes_{h^p} \mathbb{Z}_{p^2}=\langle \sigma,\tau\, | \, \tau^q=\sigma^{p^2}=1,\, \sigma\tau\sigma^{-1}=\tau^{h^p} \rangle$,
	\item[(iii)] $\mathbb{Z}_{q}\rtimes_{h} \mathbb{Z}_{p^2}=\langle \sigma,\tau\, | \, \tau^q=\sigma^{p^2}=1,\, \sigma\tau\sigma^{-1}=\tau^h \rangle$.
\end{itemize}

We summarize in the following table the total number of left braces according to the additive and multiplicative isomorphism class of groups that we will obtain in this section.

\begin{table}[H]
	\centering
	\begin{tabular}{c|c|c|c|c|c}
		$+ \backslash \circ$ & $\Z_{p^2q}$& $\Z_q\rtimes_{h^p}\Z_{p^2}$ & $\Z_q\rtimes_h\Z_{p^2}$  & $\Z_p^2\times\Z_q$ & $\Z_p\times(\Z_q\rtimes_{h^p}\Z_p)$ \\
		\hline
		$\Z_{p^2q}$ & $2$ & $p$ & $p$&- &-  \\
		$\Z_p^2\times\Z_q$ &- &- &-& $2$ & $4$ 		
	\end{tabular}
	\caption{Enumeration of left braces of size $p^2q$ with $q=1\pmod{p^2}$.}
\end{table}

\subsection{Left braces of cyclic type}\label{subsection:q=1(p^2)_cyclic}

In this section we denote by $A$ the cyclic group of size $p^2q$. The size of the image of $\pi_2$ of regular subgroups of $\aut{A}$ divides $p^2$.

\begin{lemma}\label{lem:subgroups_p^2_cyclic}
	The subgroups of size $p^2$ of $\aut{A}$ are 
	$$H_j=\langle \varphi_{jp+1,h}\rangle \quad\text{and}\quad T=\langle \varphi_{p+1,1},\ \varphi_{1,h^p} \rangle$$
	for $0\leq j\leq p-1$.
\end{lemma}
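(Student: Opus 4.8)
The plan is to reduce the statement to a routine classification of the subgroups of an explicit abelian $p$-group. First I would recall that $\aut{A}\cong\Z_{p^2}^\times\times\Z_q^\times$ and write $\rho_1,\rho_2$ for the projections onto the two factors. Since any subgroup of order $p^2$ is a $p$-group and $\aut{A}$ is abelian, it is contained in the unique $p$-Sylow subgroup $P=P_1\times P_2$ of $\aut A$, where $P_1$ and $P_2$ are the $p$-parts of $\Z_{p^2}^\times$ and $\Z_q^\times$ respectively. Now $\Z_{p^2}^\times$ is cyclic of order $p(p-1)$, so $P_1=\langle\varphi_{p+1,1}\rangle\cong\Z_p$; here I would use $(1+p)^i\equiv 1+ip\pmod{p^2}$, so that $\{1+jp:0\le j\le p-1\}$ is exactly $P_1$. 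Since $q\equiv 1\pmod{p^2}$, the cyclic group $\Z_q^\times$ has $p$-part $P_2$ cyclic of order $p^k$ for some $k\ge 2$, with $\langle h\rangle$ as its unique subgroup of order $p^2$ and $\langle h^p\rangle$ its unique subgroup of order $p$. Thus $P\cong\Z_p\times\Z_{p^k}$, and the task is to list the order-$p^2$ subgroups of this group.

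Next I would split according to the isomorphism type of an order-$p^2$ subgroup $K\le P$. If $K\cong\Z_p\times\Z_p$ it is elementary abelian, hence contained in the $p$-torsion $P[p]=\langle\varphi_{p+1,1}\rangle\times\langle\varphi_{1,h^p}\rangle$; as $P[p]$ already has order $p^2$, this forces $K=T$, the unique non-cyclic subgroup. If instead $K\cong\Z_{p^2}$, it is generated by some $x=\varphi_{a,b}$ of order $p^2$. Because $\rho_1(x)\in P_1$ has order dividing $p$, the second coordinate $b$ must have order exactly $p^2$, so $b=h^t$ with $\gcd(t,p)=1$, while $a=1+ip$ for some $i$. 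Replacing $x$ by $x^{t^{-1}}$ (inverse taken mod $p^2$) leaves $K$ unchanged and brings the generator to the form $\varphi_{it^{-1}p+1,\,h}$, whence $K=H_j$ with $j\equiv it^{-1}\pmod p$. This shows that every order-$p^2$ subgroup occurs in the stated list.

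Finally I would confirm that the list is irredundant and complete. Comparing second coordinates shows that $H_j=H_{j'}$ forces $h=h^u$ with $u\equiv 1\pmod{p^2}$, and then $j\equiv j'\pmod p$, so the $H_j$ are pairwise distinct for $0\le j\le p-1$; moreover $T\neq H_j$ since $T$ is not cyclic. A counting check closes the argument: $P$ has $p^2(p-1)$ elements of order $p^2$, and a cyclic group of order $p^2$ has $p^2-p$ generators, giving exactly $p$ cyclic subgroups, together with the single $P[p]$. The one point requiring care is that the hypothesis only yields $p^2\mid q-1$, so $P_2$ may be strictly larger than $\langle h\rangle$; the argument must therefore be run with $P_2\cong\Z_{p^k}$ for arbitrary $k\ge 2$, and the key fact to pin down is that the elements of order exactly $p^2$ in such a cyclic group are precisely the generators of its unique order-$p^2$ subgroup $\langle h\rangle$, which is what legitimizes the normalization $b=h^t$.
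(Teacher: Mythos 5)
Your proof is correct, and it arrives at the same list by a route that differs from the paper's in one essential respect: how completeness is established. The paper first notes that any subgroup of order $p^2$ lies in the subgroup of elements of order dividing $p^2$, identifies that subgroup as $\langle\varphi_{p+1,1},\varphi_{1,h}\rangle\cong\Z_p\times\Z_{p^2}$, and then invokes a cited theorem asserting that $\Z_p\times\Z_{p^2}$ has exactly $p+1$ subgroups of order $p^2$; since $H_0,\dots,H_{p-1},T$ are $p+1$ distinct such subgroups, they must be all of them. You instead work inside the full $p$-Sylow subgroup $\Z_p\times\Z_{p^k}$ and classify directly by isomorphism type: an elementary abelian subgroup must coincide with the $p$-torsion, which is exactly $T$, and a cyclic subgroup of order $p^2$ has a generator whose second coordinate has exact order $p^2$, so a power substitution normalizes it to $\varphi_{jp+1,h}$, giving some $H_j$. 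This makes the argument self-contained --- your closing count of elements of order $p^2$ re-derives the figure $p+1$ that the paper imports from the literature --- at the cost of being longer; it also obliges you to handle the possibility $p^3\mid q-1$ explicitly, which you do correctly by observing that the elements of order $p^2$ in a cyclic $p$-group are precisely the generators of its unique subgroup $\langle h\rangle$ of order $p^2$, whereas the paper sidesteps this by cutting down to elements of order dividing $p^2$ at the very start. Both arguments are sound.
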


\begin{proof}
	Every subgroup of size $p^2$ in $\aut{A}$ embeds into the subgroup of the elements of order at most $p^2$ of $\aut{A}$, which is generated by $\varphi_{p+1,1}$ and $\varphi_{1,h}$ and such group is isomorphic to $\Z_p\times \Z_{p^2}$. According to \cite[Theorem 3.3]{subgroups_finite_p-group}, the group $\Z_p\times \Z_{p^2}$ has $p+1$ subgroups of size $p^2$. The subgroups $\langle \varphi_{p+1,1},\ \varphi_{1,h^p}\rangle$ and $\langle \varphi_{jp+1,h}\rangle$ for $0\leq j\leq p-1$ are $p+1$ distinct subgroups of size $p^2$.
\end{proof}

We will need this lemma in the following proposition.

\begin{lemma}\label{lem:arithmetic_f}
	The mapping 
	$$f_j:\mathbb{Z}_{p^2}\longrightarrow \mathbb{Z}_{p^2},\quad m\mapsto \frac{m(m-1)}{2}jp+m$$ 
	is a bijection for every $0\leq j\leq p-1$.
\end{lemma}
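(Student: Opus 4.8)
The plan is to show that $f_j$ is injective, from which bijectivity follows immediately since the domain and codomain are the finite set $\mathbb{Z}_{p^2}$ of the same cardinality. First I would reduce the problem modulo the prime-power structure of $\mathbb{Z}_{p^2}$. Observe that $f_j(m) = m + \tfrac{m(m-1)}{2}jp \equiv m \pmod{p}$, since the second summand is divisible by $p$; this is well-defined because $\tfrac{m(m-1)}{2}$ is an integer and $2$ is invertible modulo $p^2$ (here $p$ is odd). Hence $f_j$ induces the identity on the quotient $\mathbb{Z}_{p^2}/p\mathbb{Z}_{p^2} \cong \mathbb{Z}_p$, which already tells us that any two elements with the same image under $f_j$ must be congruent modulo $p$.

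Next I would exploit this congruence. Suppose $f_j(m) = f_j(m')$ with $m \equiv m' \pmod p$, so write $m' = m + kp$ for some $k \in \mathbb{Z}$. I would substitute into the defining formula and expand $f_j(m+kp) - f_j(m)$, keeping track of everything modulo $p^2$. The linear term contributes $kp$, and the quadratic term $\tfrac{(m+kp)(m+kp-1)}{2}jp - \tfrac{m(m-1)}{2}jp$ contributes, after expansion, a multiple of $p^2$ coming from every cross term involving the extra factor $kp$ (since each such term already carries one factor $p$ from $jp$ and another from $kp$). Concretely the difference reduces to $kp \pmod{p^2}$, so $f_j(m') - f_j(m) \equiv kp \pmod{p^2}$. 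Setting this to zero forces $kp \equiv 0 \pmod{p^2}$, i.e. $k \equiv 0 \pmod p$, whence $m' \equiv m \pmod{p^2}$, giving injectivity.

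The only step requiring genuine care is the expansion of the quadratic term, where I must confirm that all contributions beyond the bare $kp$ are absorbed into $p^2\mathbb{Z}$; this is where the oddness of $p$ (so that $2$ is a unit and the halving is harmless) and the double factor of $p$ in the product $jp \cdot kp$ are both used. I expect no real obstacle here, only a short careful computation modulo $p^2$. An alternative, perhaps cleaner, route would be to note that $f_j$ is the first coordinate of the exponentiation map $m \mapsto \varphi_{jp+1,h}^m$ under the identification from Lemma~\ref{lem:subgroups_p^2_cyclic}: since $\varphi_{jp+1,h}$ has order $p^2$, its powers are distinct, and reading off the $\mathbb{Z}_{p^2}^\times$-component (using $(jp+1)^m \equiv jmp+1 \pmod{p^2}$, which matches $f_j(m)$ once we identify $jp+1$ with an element whose logarithm is tracked additively) yields injectivity directly; I would present the elementary congruence argument as the main proof and mention this structural interpretation as a remark.
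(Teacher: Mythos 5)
Your proposal is correct and takes essentially the same route as the paper: the paper's proof rests on exactly the two facts you isolate, namely $f_j(m)\equiv m\pmod p$ and $f_j(m+kp)=f_j(m)+kp$ in $\mathbb{Z}_{p^2}$ (which it proves by induction rather than by your direct expansion, and uses to conclude surjectivity rather than injectivity --- an immaterial difference on a finite set). One caution about your closing structural remark: $f_j(m)$ is not the $\mathbb{Z}_{p^2}^\times$-component of $\varphi_{jp+1,h}^m$ (that component is $(jp+1)^m\equiv 1+jmp\pmod{p^2}$), but rather the exponent of $\sigma$ in the translation part of $(\sigma\varphi_{jp+1,h})^m$ inside $\Hol(\mathbb{Z}_{p^2q})$, so deducing injectivity that way would require the regularity of the subgroup $G_j$, which is not available independently at this point; your elementary congruence argument is the right one to present.
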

\begin{proof}
	Clearly, we have $f_j(m)=m \pmod p$ and	we can prove inductively that $f_j(m+kp)=f_j(m)+kp$. Since every element in $\Z_{p^2}$ is of the form $m+kp$ for suitable $m,k$ then $f_j$ is surjective. 
\end{proof}

\begin{proposition}
	The left braces of cyclic type with $|\ker{\lambda}|=q$ are $(B_j,+,\circ)$ for $0\leq j\leq p-1$ where
	\[\begin{pmatrix}
	n \\ m 
	\end{pmatrix} + \begin{pmatrix}
	x \\ y 
	\end{pmatrix} = \begin{pmatrix}
	n+x \\ m+y \end{pmatrix}, \quad
	\begin{pmatrix}
	n \\ m 
	\end{pmatrix} \circ
	\begin{pmatrix}
	x \\ y 
	\end{pmatrix} =
	\begin{pmatrix}
	n+\big(f_j^{-1}(n)pj+1\big)x \\ m+h^{f_j^{-1}(n)}y 
	\end{pmatrix},
	\]
	for every $0\leq n,x\leq p-1$ and $0\leq m,y\leq q-1$. In particular, $(B_j,\circ)\cong \mathbb{Z}_{q}\rtimes_h \mathbb{Z}_{p^2}$.
\end{proposition}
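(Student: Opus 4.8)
The plan is to translate the condition $|\ker{\lambda}|=q$ into a statement about regular subgroups and then enumerate them. By Remark \ref{remark for lambdas}, a regular subgroup $G$ of $\Hol(A)$ yields a brace with $|\ker{\lambda}|=|G|/|\pi_2(G)|$, so $|\ker{\lambda}|=q$ is equivalent to $|\pi_2(G)|=p^2$. By Lemma \ref{lem:subgroups_p^2_cyclic}, $\pi_2(G)$ is either $T=\langle\varphi_{p+1,1},\varphi_{1,h^p}\rangle$ or $H_j=\langle\varphi_{jp+1,h}\rangle$ for some $0\le j\le p-1$. First I would discard $T$: since $T$ is elementary abelian and the unique subgroup of order $q$ of $A$ is $\langle\tau\rangle$, the standard presentation is $G=\langle\tau,\ \sigma^a\varphi_{p+1,1},\ \sigma^b\varphi_{1,h^p}\rangle$, and the \nc conditions applied to the two order-$p$ generators force $a\equiv b\equiv 0\pmod p$, exactly as in the proof of Proposition \ref{prop:cyclic_ker=q_non-existence}. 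Then $\pi_1(G)\subseteq\langle\sigma^p,\tau\rangle$ and $G$ is not regular by Lemma \ref{regularity}.

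Next I would treat $\pi_2(G)=H_j$. Here $\ker{\pi_2|_G}=\langle\tau\rangle$, so $G=\langle\tau,\ g\rangle$ with $g=\sigma^a\varphi_{jp+1,h}$, and since $g\tau g^{-1}=\tau^h$ one gets $G\cong\Z_q\rtimes_h\Z_{p^2}$ once $g$ is checked to have order $p^2$. The key computation is $g^m=\sigma^{a f_j(m)}\varphi_{(jp+1)^m,h^m}$, which follows from the congruence $(jp+1)^m\equiv 1+mjp\pmod{p^2}$ and produces exactly the arithmetic function $f_j$ of Lemma \ref{lem:arithmetic_f}. Thus $\pi_1(\tau^c g^m)=\sigma^{a f_j(m)}\tau^c$, and since $f_j$ is a bijection of $\Z_{p^2}$, regularity holds if and only if $a$ is invertible modulo $p^2$. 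Conjugating by $\varphi_{a^{-1},1}$, which fixes the (abelian) automorphism part and sends $\sigma^a$ to $\sigma$, reduces $G$ to the representative $G_j=\langle\tau,\ \sigma\varphi_{jp+1,h}\rangle$.

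To see that there are exactly $p$ such braces, I would use that $\aut{A}\cong\Z_{p^2}^\times\times\Z_q^\times$ is abelian, so conjugation by $1\times\aut{A}$ fixes $\pi_2(G)$ setwise; since the $H_j$ are pairwise distinct subgroups, the $G_j$ lie in distinct orbits and hence, by Theorem \ref{thm:skew_holomorph}, give pairwise non-isomorphic braces.

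Finally I would read off the multiplication via Theorem \ref{thm:skew_holomorph}. For $x=\sigma^n\tau^m$ the unique preimage under $\pi_1|_{G_j}$ is $\tau^m g^{f_j^{-1}(n)}$, so $\lambda_x=\varphi_{(jp+1)^{f_j^{-1}(n)},\,h^{f_j^{-1}(n)}}=\varphi_{\,f_j^{-1}(n)pj+1,\ h^{f_j^{-1}(n)}}$, again using $(jp+1)^m\equiv 1+mjp\pmod{p^2}$. Then $x\circ y=x+\lambda_x(y)$ applied to the second element $\sigma^x\tau^y$ gives the stated formulas for $+$ and $\circ$. The main obstacle is purely the modular bookkeeping: establishing $g^m=\sigma^{af_j(m)}\varphi_{(jp+1)^m,h^m}$, inverting $f_j$ to express $\lambda_x$ through $f_j^{-1}(n)$, and keeping consistent the two roles of the $p^2$-torsion (the exponent $jp+1$ acting on $\sigma$ and the exponent $h$ acting on $\tau$); everything else is a direct application of the structural lemmas already available.
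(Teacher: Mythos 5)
Your proposal is correct and follows essentially the same route as the paper: the same case split via Lemma \ref{lem:subgroups_p^2_cyclic}, the same exclusion of $T$ by the argument of Proposition \ref{prop:cyclic_ker=q_non-existence}, the same representatives $G_j=\langle\tau,\ \sigma\varphi_{jp+1,h}\rangle$ obtained by conjugating with $\varphi_{a^{-1},1}$, and the same use of $f_j$ to extract the $\lambda$-maps (the paper phrases this as $\underbrace{\sigma\circ\cdots\circ\sigma}_{n}=\sigma^{f_j(n)}$, which is exactly your power formula $g^m=\sigma^{af_j(m)}\varphi_{(jp+1)^m,h^m}$ read through $\pi_1$). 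If anything, you are slightly more careful than the paper in noting that regularity forces $a$ to be invertible modulo $p^2$ (not merely nonzero), which is what makes the conjugation by $\varphi_{a^{-1},1}$ legitimate.
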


\begin{proof}
	The groups 
	\begin{equation*}
		G_{j}=\langle \tau,\ \sigma\varphi_{jp+1,h}\rangle \cong \mathbb{Z}_{q}\rtimes_h \mathbb{Z}_{p^2},
	\end{equation*}
	for $0\leq j\leq p-1$ are regular and they are not conjugate since their image under $\pi_2$ are not.
	According to Lemma \ref{lem:subgroups_p^2_cyclic}, we have the following cases:
	\begin{enumerate}
		\item[(i)] $\pi_2(G)=T$: arguing as in Proposition \ref{prop:cyclic_ker=q_non-existence}, we can show that there are no regular subgroups with this projection.
		
		\item[(ii)] $\pi_2(G)=H_j$: in this case a standard presentation of $G$ is
		\[
		G= \langle \tau,\ \sigma^a\tau^b\varphi_{jp+1,h}\rangle = \langle \tau,\ \sigma^a\varphi_{jp+1,h}\rangle,
		\]
		where $a\neq 0$ since $G$ is regular. Then $G$ is conjugate to $G_j$ by $\varphi_{a^{-1},1}$. 
	\end{enumerate}
	Let $(B_j,+,\circ)$ be the left brace associated to $G_j$, then
	$$\underbrace{\sigma\circ \ldots \circ \sigma}_n=\sigma^{f_j(n)},$$
	where $f_j$ is defined as in Lemma \ref{lem:arithmetic_f}. Therefore $\lambda_{\sigma^n\tau^m}=\lambda_{\tau^m\circ\sigma^n}=\lambda_{\sigma}^{f_j^{-1}(n)}$ and so the formula follows.
\end{proof}

For the case $|\pi_2(G)|=p$, the subgroups of size $p$ of $\aut{A}$ are the same as in the Subsection \ref{subsection:q=1(p)_cyclic} and so we can argue as in Proposition \ref{prop:cyclic_ker=pq_2} and then we have $p+1$ left braces of $A$-type with $|\ker{\lambda}|=pq$.

We summarize the content of this subsection in the following table:

\begin{table}[H]
	\centering
	\begin{tabular}{c|c|c|c}
		$|\ker{\lambda}|$ &  $\mathbb{Z}_{p^2q}$  &  $\Z_q\rtimes_h\Z_{p^2}$ & $\Z_q\rtimes_{h^p}\Z_{p^2}$  \\
		\hline
		$q$ &- & $p$ &- \\
		$pq$ & $1$ &- & $p$ \\
		$p^2q$ & $1$ &- &-
	\end{tabular}
	\caption{Enumeration of left braces of cyclic type of size $p^2q$ for $q=1\pmod{p^2}$.}
\end{table}

\subsection{Left braces of $ \mathbb{Z}_{p}^2\times \mathbb{Z}_{q}$-type}\label{subsection:q=1(p^2)_abelian_non-cyclic}

In this section we denote by $A$ the group $\Z_{p}^2\times \Z_{q}$. The size of the kernel of $\lambda$ of non trivial left braces of $A$-type is either $q$ or $pq$. The conjugacy classes of subgroups of size $p$ of $\aut{A}$ are the same as in the case $q=1\pmod p$ and therefore, if $|\ker{\lambda}|=pq$ we are in the same situation as in Proposition \ref{sub_abelian_p2}.


Up to conjugation, the subgroups of order $p^2$ of $\aut{A}$ are $\langle C, h^p\rangle$ and $ \langle C^l h\rangle$ where $l=0,1$ and $C$ is as in Remark \ref{subgroups of GL}. 
If $G$ is a regular subgroup of $\Hol(A)$ with $\pi_2(G)=\langle C^l h\rangle$ then
$$G=\langle \epsilon,\ v C^l h\rangle $$
for some $v=\sigma^a\tau^b$. Then 
$$(v C^l h)^p=vC^l(v)C^{2l}(v)\ldots C^{(p-1)l}(v) C^{pl}h^p=h^p\in G.$$
So, according to Lemma \ref{regularity} $G$ is not regular.
Otherwise, if $\pi_2(G)=\langle C, h^p\rangle$, we can argue as in Proposition \ref{sub_abelian_pp2} and therefore it provides a description of skew braces of $A$-type with $|\ker{\lambda}|=q$.

Hence, the enumeration of left braces of $A$-type of size $p^2q$ for $q=1\pmod{p^2}$ is as in Table \ref{table enum abelian not cyclic q=1 (p)}.

\section{Left braces of size $4q$ with $q=1 \pmod{2}$ and $q\neq 1 \pmod{4}$.} \label{4p (i)}

We consider here the case $q>3$ prime. The results contained in this section and in Section \ref{4p (ii)} are contained in the work of Dietzel (cf. \cite[Theorem 5]{Dietzel}).
We assume that $q=1\pmod{2}$ (since it is an odd prime) but $q\neq 1\pmod{4}$. 
According to \cite[Proposition 2.1]{kohl_4q}, the non-abelian groups of size $4q$ for $q>3$ are:
\begin{enumerate}
	\item[(i)] $\Z_2\times (\Z_q\rtimes_{-1}\Z_2)$, the dihedral group of order $4q$, and
	\item[(ii)] $\Z_q\rtimes_{-1}\Z_4 = \langle \sigma, \tau\, |\, \sigma^4=\tau^q=1,\ \sigma\tau\sigma^{-1}=\tau^{-1}\rangle$. 
\end{enumerate}

For $q=3$, the non-abelian groups of order $12$ are $\Z_2\times (\Z_3\rtimes_{-1}\Z_2)$, $\Z_3\rtimes_{-1}\Z_4$ and $A_4$, the alternating group on $4$ letters. The corresponding braces are implemented on the \emph{YangBaxter} package on GAP, \cite{YBE}, so we decided to drop this special case.

The enumeration of left braces we will get in this section is summarized in the following table.

\begin{table}[H]
	\centering
	\begin{tabular}{c|c|c|c|c}
		$+\backslash \circ$ & $\Z_{4q}$ & $\Z_2^2\times\Z_q$ & $\Z_2\times (\Z_q\rtimes_{-1}\Z_2)$ & $\Z_q\rtimes_{-1}\Z_4$ \\
		\hline
		$\Z_{4q}$ & $1$ & $1$ & $2$ & $1$ \\
		$\Z_2^2\times\Z_q$ & $1$ & $1$ & $1$ & $1$
	\end{tabular}
	\caption{Enumeration of left braces of order $4q$ with $q=1\pmod{2}$ and $q\neq 1\pmod{4}$.}
\end{table}

\subsection{Left braces of cyclic type}\label{subsection:4p_i_cyclic} 

In this subsection $A$ will denote the cyclic group of order $4q$.
Since $\aut A$ has order $2(q-1)$, if $G$ is a regular subgroup of $\Hol(A)$ then $|\pi_2(G)|$ divides $4$. The automorphism group has a unique subgroup of order $4$ which is generated by $\varphi_{-1,1}$ and $\varphi_{1,-1}$ and it is isomorphic to $\Z_2\times\Z_2$. So it has three non conjugated subgroups of size $2$, generated by $\varphi_{(-1)^i,(-1)^j}$ for $(i,j)\in \{(1,0),(1,1),(0,1)\}$.

\begin{proposition}\label{prop:4p_i_cyclic_2}
	The left braces of order $4q$ with $|\ker{\lambda}|=2q$ are $(B_{i,j},+,\circ)$ for $(i,j)\in \{(1,0),(1,1),(0,1)\}$ where
	$$\begin{pmatrix} n \\ m \end{pmatrix} + \begin{pmatrix} x \\ y \end{pmatrix}=\begin{pmatrix} n+x \\ m+y \end{pmatrix}, \quad  \begin{pmatrix} n \\ m \end{pmatrix} \circ \begin{pmatrix} x \\ y \end{pmatrix}=\begin{pmatrix} n+(-1)^{im} x \\ m+(-1)^{jm}y \end{pmatrix} $$
	for every $0\leq n,x\leq q-1$ and $0\leq m,y\leq 3$. In particular,
	$$(B_{i,j},\circ)\cong \begin{cases}
	\Z_q\rtimes_{-1}\Z_4, \quad \text{if }(i,j)=(0,1),\\
	\Z_2\times (\Z_q\rtimes_{-1}\Z_2), \quad \text{if }(i,j)=(1,1),\\
	\Z_2^2\times \Z_q, \quad \text{if }(i,j)=(1,0).\\
	\end{cases}$$
\end{proposition}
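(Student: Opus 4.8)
The plan is to run the two-step strategy of Section~\ref{Sec Pre} for regular subgroups $G\le\Hol(A)$ with $A=\Z_{4q}$. First I would invoke Remark~\ref{remark for lambdas}: since $|G|=4q$ and we want $|\ker\lambda|=2q$, the image must satisfy $|\pi_2(G)|=2$. By the discussion preceding the statement, $\pi_2(G)$ is therefore one of the three subgroups $\langle\varphi_{(-1)^i,(-1)^j}\rangle$ with $(i,j)\in\{(1,0),(1,1),(0,1)\}$, the three order-$2$ subgroups of the Klein four Sylow $2$-subgroup of $\aut A$. The kernel $\ker\pi_2|_G$ is a subgroup of order $2q$ of the cyclic group $A$, hence it is the \emph{unique} such subgroup $\langle\sigma^2,\tau\rangle$.

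Next, for each fixed image I would write the standard presentation $G=\langle\sigma^2,\tau,\,u\,\varphi_{(-1)^i,(-1)^j}\rangle$. Since $u$ may be taken modulo $\langle\sigma^2,\tau\rangle$, i.e. $u\in\{1,\sigma\}$, regularity (Lemma~\ref{regularity}, via $\pi_1(G)=A$) forces $u=\sigma$; thus $G=\langle\sigma^2,\tau,\sigma\varphi_{(-1)^i,(-1)^j}\rangle$ is the only candidate. I would then check the two necessary conditions: \NC{} holds because $\langle\sigma^2,\tau\rangle$ is visibly invariant under $\varphi_{(-1)^i,(-1)^j}$, and \nc{} holds because $(\sigma\varphi_{(-1)^i,(-1)^j})^2=\sigma^{1+(-1)^i}\in\langle\sigma^2,\tau\rangle$. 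Regularity can be confirmed by showing the stabiliser of the identity is trivial, exactly as in Proposition~\ref{cyclic no pq}. The three resulting groups are pairwise non-conjugate because conjugation by $1\times\aut A$ fixes $\pi_2(G)$ (as $\aut A$ is abelian) while their images are distinct, and within each image the representative is unique. This produces exactly the three left braces $B_{i,j}$.

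For the multiplication formula I would compute $\lambda$ explicitly. Identifying $\binom{n}{m}$ with $\tau^n\sigma^m$, the generators $\sigma^2,\tau$ lie in $\ker\lambda$, so $\lambda_{\tau^n\sigma^m}$ depends only on $m$. The $\pi_1$-preimage of $\tau^n\sigma^m$ in $G$ lies in the coset $\langle\sigma^2,\tau\rangle\,(\sigma\varphi_{(-1)^i,(-1)^j})^m$, whose automorphism component is $\varphi_{(-1)^i,(-1)^j}^m$; hence $\lambda_{\tau^n\sigma^m}=\varphi_{(-1)^i,(-1)^j}^m$. Substituting this into $a\circ b=a+\lambda_a(b)$ yields the displayed formula. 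Finally I would read off $(B_{i,j},\circ)$ case by case from the $\circ$-order of $\sigma$ (which is $4$ exactly when $i=0$, and $2$ otherwise) together with the action of $\lambda_\sigma$ on $\tau$: this gives $\Z_q\rtimes_{-1}\Z_4$, the dihedral group $\Z_2\times(\Z_q\rtimes_{-1}\Z_2)$, and $\Z_2^2\times\Z_q$ for $(0,1),(1,1),(1,0)$ respectively.

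The main obstacle is the formula step in the cases with $i=1$: there $\lambda_\sigma(\sigma)=\sigma^{-1}\neq\sigma$, so Lemma~\ref{ker fix}(i) does not apply to $\sigma$ and one cannot simply write the $\circ$-powers of $\sigma$ as additive multiples. Instead one must track $(\sigma\varphi_{(-1)^i,(-1)^j})^m$ inside the holomorph, whose $A$-part is $\sigma^{1+(-1)^i+\cdots+(-1)^{(m-1)i}}$, and correct it by an element of $\langle\sigma^2\rangle\le\ker\pi_2|_G$ so that the preimage has first component exactly $\sigma^m$; this is precisely what makes $\lambda_{\tau^n\sigma^m}=\varphi_{(-1)^i,(-1)^j}^m$ hold uniformly and is the only place where a careful computation, rather than an appeal to Lemma~\ref{ker fix}, is needed.
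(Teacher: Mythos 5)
Your proposal is correct and, for the classification part, follows the same route as the paper: the same three images $\langle\varphi_{(-1)^i,(-1)^j}\rangle$, the same unique kernel $\langle\sigma^2,\tau\rangle$, the same representatives $G_{i,j}=\langle\sigma^2,\tau,\sigma\varphi_{(-1)^i,(-1)^j}\rangle$, and non-conjugacy of the representatives (you use that $\aut{A}$ is abelian, so conjugation by $1\times\aut{A}$ preserves $\pi_2(G)$; the paper instead notes the $G_{i,j}$ are pairwise non-isomorphic -- both work). Where you genuinely differ is the computation of the multiplication. The paper computes $\lambda$ only for $(i,j)=(0,1)$, using the decomposition $B_{0,1}=\ker\lambda+\Fix(B_{0,1})$ together with Lemma~\ref{ker fix}, and declares the remaining two cases ``analogous''. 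As you correctly observe, they are not: when $i=1$ one has $\Fix(B_{i,j})\leq\langle\sigma^2,\tau\rangle=\ker\lambda$, so $\ker\lambda+\Fix(B_{i,j})\neq A$ and Lemma~\ref{ker fix} says nothing about $\lambda_{\tau^n\sigma^m}$ for odd $m$. Your uniform coset argument -- $(\pi_1|_G)^{-1}(\tau^n\sigma^m)$ lies in $\langle\sigma^2,\tau\rangle\,(\sigma\varphi_{(-1)^i,(-1)^j})^m$, hence $\lambda_{\tau^n\sigma^m}=\varphi_{(-1)^i,(-1)^j}^m$ -- is exactly the missing argument, so on this point your proof is more complete than the paper's.

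One caveat. Carried to the end, your computation gives
\[
\begin{pmatrix} n\\ m\end{pmatrix}\circ\begin{pmatrix}x\\ y\end{pmatrix}=\begin{pmatrix} n+(-1)^{jm}x\\ m+(-1)^{im}y\end{pmatrix},
\]
because $\varphi_{(-1)^i,(-1)^j}$ acts by $(-1)^i$ on $\sigma$ (the coordinate $m$) and by $(-1)^j$ on $\tau$ (the coordinate $n$). This is the displayed formula with $i$ and $j$ interchanged, not the displayed formula itself, so your claim that the substitution ``yields the displayed formula'' is literally false. The discrepancy, however, is a typo in the statement rather than an error in your argument: as printed, the formula is inconsistent with the proposition's own list of multiplicative groups (it would give $(B_{0,1},\circ)\cong\Z_2^2\times\Z_q$ rather than $\Z_q\rtimes_{-1}\Z_4$), and also with the paper's own proof, which derives $\lambda_{\tau^n\sigma^m}=\varphi_{1,(-1)^m}$ for $G_{0,1}$ and hence the formula printed under the label $(1,0)$. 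Your version is the self-consistent one, matching both the groups $G_{i,j}$ and the stated isomorphism types; just state it as such instead of asserting agreement with the printed display.
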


\begin{proof}
	The groups
	$$G_{i,j}=\langle \tau,\, \sigma^2,\, \sigma\varphi_{(-1)^i,(-1)^j} \rangle\cong \begin{cases}
	\Z_q\rtimes_{-1}\Z_4, \quad \text{if }(i,j)=(0,1),\\
	\Z_2\times (\Z_q\rtimes_{-1}\Z_2), \quad \text{if }(i,j)=(1,1),\\
	\Z_2^2\times \Z_q, \quad \text{if }(i,j)=(1,0)\\
	\end{cases}$$
	for $(i,j)\in I=\{(1,0),(1,1),(0,1)\}$ are regular and they are not pairwise conjugate since they are not isomorphic.
	
	As we mentioned above, if $G$ is a regular subgroup, then $\pi_2(G)=\langle \varphi_{(-1)^i,(-1)^j}\rangle$ for $(i,j)\in I$. Let assume that $\pi_2(G)$ is generated by $\varphi_{1,-1}$, so $G$ has the following standard presentation:
	\[
	G=\langle \tau,\ \sigma^2,\ \sigma^a\varphi_{1,-1} \rangle. 
	\]
	Since $G$ is regular, then $a\neq 0$. So we can assume that $a=1$ and then $G=G_{0,1}$. Since $\tau\in \ker{\pi_2}$ and $\varphi_{1,-1}(\sigma)=\sigma$, the skew brace $B_{0,1}$ associated to $G_{0,1}$ is given by $\ker{\lambda}+\Fix(B_{0,1})$. Then according to Lemma \ref{ker fix}, we have that $\lambda_{\tau^n \sigma^m}=\lambda_{\tau^n\circ \sigma^m}=\lambda_{\sigma^m}=\lambda_{\sigma}^m$. Therefore the  formula for $B_{0,1}$ follows. 
	
	The other two cases are analogous, so we skip the computations.
\end{proof}

\begin{lemma}\label{lem:4p_i_cyclic_4}
	The unique left brace of cyclic type with $|\ker{\lambda}|=q$ is $(B,+,\circ)$ where
	\[
	\begin{pmatrix}
	n \\ m
	\end{pmatrix} +\begin{pmatrix}
	x \\ y
	\end{pmatrix}=
	\begin{pmatrix}
	n+x \\ m+y
	\end{pmatrix}, \qquad
	\begin{pmatrix}
	n \\ m
	\end{pmatrix} \circ \begin{pmatrix}
	x \\ y
	\end{pmatrix}=
	\begin{pmatrix}
	n+(-1)^{\frac{m(m-1)}{2}}x \\
	m+(-1)^m y
	\end{pmatrix}
	\]
	where $0\leq n,x\leq q-1$ and $0\leq m,y\leq 3$. In particular, $(B,\circ)\cong \Z_2\times (\Z_q\rtimes_{-1}\Z_2)$.
\end{lemma}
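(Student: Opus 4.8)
The plan is to run the same regular-subgroup analysis as in Proposition~\ref{prop:4p_i_cyclic_2}, but now for $|\pi_2(G)|=4$, which by Remark~\ref{remark for lambdas} is exactly the condition $|\ker\lambda|=q$. Since $\aut{A}$ has a unique subgroup of order $4$, namely $V=\langle\varphi_{-1,1},\varphi_{1,-1}\rangle\cong\Z_2\times\Z_2$, any regular $G$ with $|\pi_2(G)|=4$ must satisfy $\pi_2(G)=V$; and since $\langle\tau\rangle$ is the unique subgroup of order $q$ of $A$, we must have $\ker{\pi_2|_G}=\langle\tau\rangle$. Hence $G$ admits the standard presentation $G=\langle\tau,\ \sigma^a\varphi_{-1,1},\ \sigma^b\varphi_{1,-1}\rangle$ for some $a,b$, and condition \NC is automatic because $\langle\tau\rangle$ is characteristic in $A$ and thus normal in $\Hol(A)$.

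First I would pin down $a,b$. A direct computation in $\Hol(A)$ gives $(\sigma^a\varphi_{-1,1})^2=1$, $(\sigma^b\varphi_{1,-1})^2=\sigma^{2b}$ and $[\sigma^a\varphi_{-1,1},\,\sigma^b\varphi_{1,-1}]=\sigma^{-2b}$, so condition \nc (these must lie in $\langle\tau\rangle$) forces $b$ even. For regularity I would use Lemma~\ref{regularity}: the $\langle\tau\rangle$-cosets of $G$ are indexed by $V$, the $\sigma$-exponents of their representatives are $0,a,b,a-b$, and $\pi_1$ is constant in the $\sigma$-direction on each coset, so $\pi_1(G)=A$ is equivalent to $\{0,a,b,a-b\}$ being a complete residue system modulo $4$. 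Together with $b$ even this leaves exactly $(a,b)\in\{(1,2),(3,2)\}$, i.e. the two groups $G_1=\langle\tau,\sigma\varphi_{-1,1},\sigma^2\varphi_{1,-1}\rangle$ and $G_3=\langle\tau,\sigma^3\varphi_{-1,1},\sigma^2\varphi_{1,-1}\rangle$.

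For uniqueness up to conjugation I would conjugate by $\varphi_{3,1}$, which fixes $\pi_2(G)$ pointwise (as $\aut{A}$ is abelian) and sends $\sigma\mapsto\sigma^3$, $\tau\mapsto\tau$; this carries $G_1$ to $G_3$, so they lie in a single $\aut{A}$-orbit and there is a unique left brace with $|\ker\lambda|=q$. For its isomorphism type I would check directly in $G_1$ that the involutions $x=\sigma\varphi_{-1,1}$ and $y=\sigma^2\varphi_{1,-1}$ commute (indeed $[x,y]=\sigma^{-4}=1$), that $x$ centralizes $\tau$ and $y$ inverts it, giving $G_1\cong\langle x\rangle\times(\langle\tau\rangle\rtimes\langle y\rangle)\cong\Z_2\times(\Z_q\rtimes_{-1}\Z_2)$.

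Finally, for the multiplication formula I would apply Remark~\ref{remark for lambdas}: writing a general element as $\tau^n\sigma^m$, one has $\lambda_{\tau^n\sigma^m}=\lambda_{\sigma^m}=\pi_2\big((\pi_1|_{G_1})^{-1}(\sigma^m)\big)$, depending only on the $\sigma$-coordinate. The one place a little care is needed is that here $\sigma\notin\ker\lambda$ and $\lambda_\sigma(\sigma)=\sigma^{-1}\neq\sigma$, so Lemma~\ref{ker fix}(i) does \emph{not} apply and $\lambda_{\sigma^m}\neq\lambda_\sigma^{\,m}$; instead I would read off the preimages coset by coset, obtaining $\lambda_{\sigma^0}=\mathrm{id}$, $\lambda_{\sigma}=\varphi_{-1,1}$, $\lambda_{\sigma^2}=\varphi_{1,-1}$, $\lambda_{\sigma^3}=\varphi_{-1,-1}$, that is $\lambda_{\sigma^m}=\varphi_{(-1)^m,\,(-1)^{m(m-1)/2}}$. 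Substituting into $a\circ b=a+\lambda_a(b)$ then yields the stated formula. The main obstacle is thus not any single hard step but this bookkeeping of the four values of $\lambda_{\sigma^m}$, since the exponent $(-1)^{m(m-1)/2}$ arises precisely from tracking which $V$-coset the power $\sigma^m$ falls into.
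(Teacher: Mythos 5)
Your proof is correct and takes essentially the same route as the paper's: both pin down $\pi_2(G)$ as the unique order-$4$ subgroup $\langle\varphi_{-1,1},\varphi_{1,-1}\rangle$ and $\ker\pi_2|_G=\langle\tau\rangle$, force the $\sigma$-exponent paired with $\varphi_{1,-1}$ to be $2$ via condition \nc, rule out the remaining degenerate case by regularity, and identify the two surviving groups under conjugation by $\varphi_{-1,1}=\varphi_{3,1}$. The only notable difference is at the end: the paper simply asserts that the stated formulas define a left brace with the required properties and appeals to uniqueness, whereas you derive the formula explicitly from $G_1$ by reading off $\lambda_{\sigma^m}=\varphi_{(-1)^m,(-1)^{m(m-1)/2}}$ coset by coset via Remark~\ref{remark for lambdas} (correctly noting that Lemma~\ref{ker fix}(i) is unavailable here), which is if anything more complete, and your identification $(B,\circ)\cong\Z_2\times(\Z_q\rtimes_{-1}\Z_2)$ silently corrects the paper's typo $\Z_3$.
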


\begin{proof}
	The group
	\[H=
	\langle \tau,\ \sigma^2\varphi_{1,-1},\ \sigma\varphi_{-1,1}\rangle \cong \Z_2\times (\Z_q\rtimes_{-1}\Z_2)
	\]
	is regular. Let $G$ be a regular subgroup of $\Hol(A)$ with $|\pi_2(G)|=4$. We show that it is conjugated to $H$. Since the only subgroup of size $q$ of $A$ is generated by $\tau$, we have that $G$ has the following standard presentation
	\[
	G=\langle \tau,\ \sigma^a\varphi_{1,-1},\ \sigma^b\varphi_{-1,1}\rangle
	\]
	for some $1\leq a,b\leq 3$. By checking the conditions \NC we have that $a=2$. If $b=2$ then $\sigma^2 \varphi_{1,-1}\sigma^2\varphi_{-1,1}=\varphi_{1,-1}\varphi_{-1,1}\in G$ and so $G$ is not regular. Hence, up to conjugation by $\varphi_{-1,1}$, we can assume that $b=1$, and so $G$ is conjugated to $H$.
	The formulas in the statement define a left brace with the desired properties, and therefore it is isomorphic to the one associated to the unique regular subgroup $G$ with $|\pi_2(G)|=4$.  
\end{proof}

We have the following table with the left braces of cyclic type:

\begin{table}[H]
	\centering
	\begin{tabular}{c|c|c|c|c}
		$\ker{\lambda}$ & $\Z_{4q}$ & $\Z_2^2\times\Z_q$ & $\Z_2\times (\Z_q\rtimes_{-1}\Z_2)$ & $\Z_q\rtimes_{-1}\Z_4$\\
		\hline
		$q$ &- &- & $1$ &- \\
		$2q$ &- & $1$ & $1$ & $1$ \\
		$4q$ & $1$ &- &- &-
	\end{tabular}
	\caption{Enumeration of left braces of cyclic type of size $4q$ with $q=1\pmod{2}$ and $q\neq 1\pmod{4}$.}
	\label{table:cyclic_4p_i}
\end{table}

\subsection{Left braces of $\Z_2^2\times\Z_q$-type.}\label{subsection:4p_i_abelian} In this subsection, $A$ will denote the abelian group $\Z_2^2\times\Z_q$.
Up to conjugation, $\aut{A}$ has a unique subgroup of order $4$ which is isomorphic to $\Z_2\times\Z_2$ and it is generated by $$C=\begin{pmatrix}
1 & 1 \\ 0 & 1
\end{pmatrix}$$ and $\eta=-1$. So, there are three possible subgroups of order $2$ up to conjugation, namely the subgroups generated by $C^i\eta^j$ for $(i,j)\in \{(1,0),(1,1),(0,1)$.

\begin{lemma}\label{lem:4p_i_abelian_4}
	If $G$ is a regular subgroup of $\Hol(A)$, then $|\pi_2(G)|\neq 4$.
\end{lemma}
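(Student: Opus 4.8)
The plan is to argue by contradiction: assuming $|\pi_2(G)|=4$, I will produce a nontrivial element of $G\cap(1\times\aut A)$, which by Lemma \ref{regularity} contradicts regularity. As recalled just above, the hypothesis $q\neq 1\pmod 4$ forces the $2$-part of $|\aut A|=6(q-1)$ to be $4$, so that up to conjugation the unique subgroup of order $4$ of $\aut A$ is the elementary abelian group $\langle C,\eta\rangle\cong\Z_2\times\Z_2$ (in particular it has no element of order $4$). Hence I may assume $\pi_2(G)=\langle C,\eta\rangle$. Its kernel $\ker(\pi_2|_G)$ has order $q$, so it equals the unique subgroup $\langle\epsilon\rangle$ of that order, and $G$ admits the standard presentation
$$G=\langle\, \epsilon,\ u_1 C,\ u_2\eta\,\rangle, \qquad u_1,u_2\in\langle\sigma,\tau\rangle,$$
where the coset representatives $u_1,u_2$ are chosen inside the $2$-part $\langle\sigma,\tau\rangle$.

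Next I would extract the constraints on $u_1,u_2$ from conditions \NC and \nc together with regularity. Writing the holomorph product additively, $(u_1 C)^2=u_1+C(u_1)=(I+C)u_1$, and condition \nc forces this to lie in $\langle\epsilon\rangle$; since $(I+C)u_1\in\langle\sigma,\tau\rangle$ and $\langle\sigma,\tau\rangle\cap\langle\epsilon\rangle=1$, we get $(I+C)u_1=0$, i.e. $u_1$ lies in the fixed space $\langle\sigma\rangle$ of $C$. Regularity rules out $u_1=1$, for otherwise $C\in G\cap(1\times\aut A)$; hence $u_1=\sigma$, and similarly $u_2\neq 1$. Finally, because $C$ and $\eta$ commute in $\aut A$, condition \nc forces $[u_1C,u_2\eta]\in\langle\epsilon\rangle$; using that $\eta$ acts trivially on $\langle\sigma,\tau\rangle$, this reduces to $C(u_2)=u_2$, so $u_2$ also lies in $\langle\sigma\rangle$, and with $u_2\neq 1$ we conclude $u_2=\sigma$.

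The contradiction then falls out of a single product: computing in $\Hol(A)$,
$$(u_1 C)(u_2\eta)=\big(\sigma+C(\sigma),\ C\eta\big)=\big(\sigma+\sigma,\ C\eta\big)=\big(0,\ C\eta\big)=C\eta,$$
since $C$ fixes $\sigma$ and $2\sigma=0$ in $\Z_2^2$. Thus $C\eta$ is a nontrivial element of $G\cap(1\times\aut A)$, so $G$ is not regular by Lemma \ref{regularity}.

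I expect the only delicate point to be the bookkeeping that pins down $u_1=u_2=\sigma$: one must combine the squaring relation, the commuting relation, and an appeal to regularity at each step to exclude the trivial lifts. It is precisely this forced coincidence $u_1=u_2=\sigma$ — possible because $C$ has a nontrivial fixed vector — that produces the cancellation $\sigma+C(\sigma)=0$ and lands the product in $1\times\aut A$. The hypothesis $q\neq 1\pmod 4$ is used only to guarantee that the order-$4$ image is elementary abelian rather than cyclic, so that this single configuration of $\pi_2(G)$ is the only one to rule out.
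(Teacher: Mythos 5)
Your proof is correct and follows essentially the same route as the paper's: write the standard presentation $G=\langle \epsilon,\ u_1C,\ u_2\eta\rangle$, use the condition $(u_1C)^2\in\langle\epsilon\rangle$ and the commutator condition to force $u_1,u_2$ into the fixed space $\langle\sigma\rangle$ of $C$, invoke regularity to get $u_1=u_2=\sigma$, and then exhibit $(\sigma C)(\sigma\eta)=C\eta$ as a nontrivial element of $G\cap(1\times\aut{A})$. The paper phrases this with exponents ($b=d=0$, $a=c\neq 0$) and the element $C^{-c/a}\eta$, which over $\Z_2$ is exactly your $C\eta$, so the two arguments coincide step for step.
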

\begin{proof}
	By the discussion above and the fact that $A$ has an unique subgroup of order $q$, say $\langle\epsilon\rangle$, then $G$ has the following standard presentation:
	\[
	G=\langle \epsilon,\ \sigma^a\tau^b C,\ \sigma^c\tau^d\eta \rangle
	\]
	for some $0\leq a,b,c,d\leq 1$. From the \NC conditions $(\sigma^a\tau^b C)^2, [\sigma^a\tau^b C,\sigma^c\tau^d\eta]\in \langle \epsilon\rangle$ then we have $b=d=0$ and so $a,c\neq 0$. Hence, $(\sigma C)^{-\frac{c}{a}} \sigma^c\eta=C^{-\frac{c}{a}}\eta\in G$ and so $G$ is not regular.
\end{proof}

\begin{proposition}\label{lem:4p_i_abelian_2}
	The left braces of order $4q$ with $|\ker{\lambda}|=2q$ are $(B_{i,j},+,\circ)$ for $(i,j)\in \{(1,0),(0,1),(1,1)\}$, where
	$$\begin{pmatrix}
	n \\ m \\l 
	\end{pmatrix} +\begin{pmatrix}
	x \\ y \\z
	\end{pmatrix}=
	\begin{pmatrix}
	n+x \\ m+y \\ l+z
	\end{pmatrix}, \qquad \begin{pmatrix} n \\ m \\ l \end{pmatrix} \circ \begin{pmatrix} x \\ y \\ z \end{pmatrix}=\begin{pmatrix} n+(-1)^{il} x \\ m+y+j l z \\ l+z \end{pmatrix}$$
	for $0\leq n,x\leq q-1$ and $0\leq m,l,y,z\leq 1$. In particular,
	$$(B_{i,j},\circ)\cong \begin{cases}
	\Z_2\times (\Z_q\rtimes_{-1}\Z_2), \quad \text{if }(i,j)=(1,0),\\
	\Z_q\rtimes_{-1}\Z_4, \quad \text{if }(i,j)=(1,1),\\
	\Z_{4q}, \quad \text{if }(i,j)=(0,1).\\
	\end{cases}$$
\end{proposition}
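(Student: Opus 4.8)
The plan is to classify the regular subgroups $G$ of $\Hol(A)$ with $|\pi_2(G)|=2$, since by Remark \ref{remark for lambdas} the condition $|\ker\lambda|=2q$ is equivalent to $|\pi_2(G)|=4q/(2q)=2$. By the discussion preceding Lemma \ref{lem:4p_i_abelian_4}, every subgroup of order $2$ of $\aut{A}$ is conjugate to exactly one of $\langle \eta^i C^j\rangle$ for $(i,j)\in\{(1,0),(0,1),(1,1)\}$, and these three lie in distinct conjugacy classes: they are distinguished by whether the $GL_2(2)$-component and the $\Z_q^\times$-component of a generator are trivial. For each such pair I would take as representative
\[
G_{i,j}=\langle \sigma,\ \epsilon,\ \tau\,\eta^i C^j\rangle ,
\]
whose kernel under $\pi_2$ is $\langle\sigma,\epsilon\rangle$ of order $2q$ and whose image is $\langle\eta^i C^j\rangle$.

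First I would verify regularity via Lemma \ref{regularity}. Since $(\tau\,\eta^iC^j)^2=\sigma^j\in\langle\sigma,\epsilon\rangle$, the quotient $G_{i,j}/\langle\sigma,\epsilon\rangle$ has order $2$, so $|G_{i,j}|=4q=|A|$; and any element whose automorphism part is trivial already lies in $\langle\sigma,\epsilon\rangle$ with trivial $A$-part, hence is the identity, giving $G_{i,j}\cap(1\times\aut{A})=1$. The three representatives are pairwise non-conjugate because conjugate regular subgroups have conjugate $\pi_2$-images, and the three images were just seen to be non-conjugate in $\aut A$. A direct computation of $(A,\circ)$ then confirms the stated isomorphism types: from $\tau^{\circ2}=\sigma^j$ and $\tau\circ\epsilon=\tau\pm\epsilon$ one reads off $\Z_{4q}$ for $(0,1)$, the dihedral group $\Z_2\times(\Z_q\rtimes_{-1}\Z_2)$ for $(1,0)$, and $\Z_q\rtimes_{-1}\Z_4$ for $(1,1)$.

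The core of the argument is completeness: that an arbitrary regular $G$ with $|\pi_2(G)|=2$ is conjugate to some $G_{i,j}$, and this case split is the main obstacle. After conjugating so that $\pi_2(G)=\langle\eta^iC^j\rangle$, the kernel $\ker(\pi_2|_G)=G\cap(A\times1)$ is an order-$2q$ subgroup of $A$, hence contains the unique Sylow $q$-subgroup $\langle\epsilon\rangle$ together with one involution, and by condition \NC it must be invariant under $\eta^iC^j$. When $j=1$ the matrix $C$ fixes only $\langle\sigma\rangle$ among the three involutions of $\Z_2^2$, forcing the kernel to be $\langle\sigma,\epsilon\rangle$; when $j=0$ the automorphism $\eta$ acts trivially on $\Z_2^2$ and imposes no constraint, so one must instead use that $GL_2(2)\cong S_3$ sits inside $N_{\aut{A}}(\langle\eta\rangle)$ and acts transitively on the three involutions of $\Z_2^2$ to conjugate the kernel to $\langle\sigma,\epsilon\rangle$. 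In every case the standard presentation becomes $G=\langle\sigma,\epsilon,\,u\,\eta^iC^j\rangle$ with $u\equiv\tau\pmod{\langle\sigma,\epsilon\rangle}$; left-multiplying the last generator by a kernel element reduces $u$ to $\tau$, so $G=G_{i,j}$, and condition \nc holds automatically since $(\tau\,\eta^iC^j)^2=\sigma^j$.

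Finally I would read off the product. Writing a general element as $(n,m,l)=(\epsilon^n\sigma^m)\circ\tau^l$ with $\epsilon^n\sigma^m\in\ker\lambda$, the homomorphism property of $\lambda$ together with Lemma \ref{ker fix} gives $\lambda_{(n,m,l)}=\lambda_{\tau^l}=\lambda_\tau^{\,l}=(\eta^iC^j)^l$ for $l\in\{0,1\}$. Substituting into $a\circ b=a+\lambda_a(b)$ and using that $\eta^i$ scales the $\Z_q$-coordinate by $(-1)^i$ while $C^j$ sends $(y,z)\mapsto(y+jz,z)$ yields exactly the claimed formula, and the multiplicative isomorphism types are those already computed for the representatives.
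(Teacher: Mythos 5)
Your proposal is correct and follows essentially the same route as the paper: the same representatives $G_{i,j}=\langle\sigma,\epsilon,\tau\eta^iC^j\rangle$, the same two-step strategy (regularity and non-conjugacy of representatives, then completeness by splitting on the conjugacy class of $\pi_2(G)$ and pinning down the kernel via condition \NC{} or the action of $GL_2(2)$ centralizing $\eta$), and the same derivation of the multiplication via $\lambda$. If anything, your uniform computation $\lambda_{(n,m,l)}=\lambda_{\tau^l}=(\eta^iC^j)^l$ from the decomposition $(\epsilon^n\sigma^m)\circ\tau^l$ is slightly more careful than the paper's appeal to ``the same argument as the first case,'' since Lemma \ref{ker fix}(ii) literally requires $\tau\in\Fix(B)$, which fails when $j=1$.
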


\begin{proof}
	The groups 
	\begin{align*}
		G_{i,j}=\langle \sigma,\ \epsilon,\ \tau\eta^i C^j\rangle\cong \begin{cases}
			\Z_2\times (\Z_q\rtimes_{-1}\Z_2), \quad \text{if }(i,j)=(1,0),\\
			\Z_q\rtimes_{-1}\Z_4, \quad \text{if }(i,j)=(1,1),\\
			\Z_{4q}, \quad \text{if }(i,j)=(0,1)\\
		\end{cases}
	\end{align*}
	for $(i,j)\in  \{(1,0),(0,1),(1,1)\}$ are regular and they are not pairwise conjugate since they are not isomorphic. 
	
	Let $G$ be a regular subgroup of $\Hol(A)$ with $|\pi_2(G)|=2$ and the kernel has size $2q$, so it has the form $\langle \epsilon, v\rangle$ for $v\in \langle \sigma,  \tau\rangle$.
	
	Suppose that $\pi_2(G)=\langle \eta \rangle$. Since every element in $GL_2(2)$ centralizes $\eta$ we have that $v=\sigma$ up to conjugation. So $G$ is conjugated to $G_{1,0}$. Let $B_{1,0}$ denotes the left brace associated to $G_{1,0}$. Since $\langle \sigma, \epsilon\rangle\subseteq\ker{\lambda}$ and $\langle\tau\rangle\subseteq\Fix(B_{1,0})$, we have that $B_{1,0}=\ker{\lambda}+\Fix(B_{1,0})$. Hence, using Lemma \ref{ker fix}, the formula follows.

	For the case $\pi_2(G)=\langle C\rangle$, by conjugating by elements in $N_{GL_2(p)})(C)$ we can assume that $v=\sigma,\tau$. The first one gives us $G_{0,1}$ and the second leads to the standard presentation
	\[
	G=    \langle \tau,\ \epsilon,\ \sigma C \rangle.
	\]
	Condition \NC shows a contradiction. 
	
	If $\pi_2(G)=\langle C,\eta\rangle$ we can argue in the same fashion and also the formulas follow by the same argument as the first case.
\end{proof}

The following table summarizes the results of this subsection.

\begin{table}[H]
	\centering
	\begin{tabular}{c|c|c|c|c}
		$\ker{\lambda}$ & $\Z_{4q}$ & $\Z_2^2\times\Z_q$ & $\Z_2\times (\Z_q\rtimes_{-1}\Z_2)$ & $\Z_q\rtimes_{-1}\Z_4$  \\
		\hline
		$2q$ & $1$ &- & $1$& $1$ \\
		$4q$ &- & $1$ &- &-
	\end{tabular}
	\caption{Enumeration of left braces of size $\Z_2^2\times\Z_q$-type.}
	\label{table:abelian_4p}
\end{table}

\section{Left braces of size $4q$ with $q=1 \pmod{4}$.} \label{4p (ii)}

In this section we consider the skew braces of order $4q$ for $q=1\pmod{4}$. The non-abelian groups of order $4q$ are the same as in Section \ref{4p (i)} but we have one extra group given by:
$$\Z_q\rtimes_{\xi}\Z_4=\langle \sigma,\tau\, |\, \sigma^4=\tau^q=1,\ \sigma\tau\sigma^{-1}=\tau^\xi \rangle$$
where $\xi$ is an element of order $4$ in $\Z_q^\times$, \cite[Proposition 2.1]{kohl_4q}.

The enumeration of left braces we will get in this section is summarized in the following table.

\begin{table}[H]
	\centering
	\begin{tabular}{c|c|c|c|c|c}
		$+\backslash \circ$ & $\Z_{4q}$ & $\Z_2^2\times\Z_q$ & $\Z_2\times (\Z_q\rtimes_{-1}\Z_2)$ & $\Z_q\rtimes_{-1}\Z_4$ & $\Z_q\rtimes_{\xi}\Z_4$ \\
		\hline
		$\Z_{4q}$ & $1$ & $1$ & $2$ & $1$ & $1$ \\
		$\Z_2^2\times\Z_q$ & $1$ & $1$ & $1$ & $1$ & $1$ 
	\end{tabular}
	\caption{Enumeration of left braces of order $4q$ with $q=1\pmod{4}$.}
\end{table}

\subsection{Left braces of cyclic type.}
In this subsection we denote by $A$ the cyclic group of order $4q$. The automorphism group of $A$ has size $2(q-1)$, so the size of the image of every regular subgroup under $\pi_2$ divides $4$.
The group $\Z_q^\times$ contains an element $\xi$ of order $4$ since $q=1\pmod 4$ and clearly $\xi^2=-1$.

The subgroup of $\aut A\cong \Z_2\times \Z_q^\times$ containing the elements of order at most $4$ is generated by $\varphi_{-1,1}$ and $\varphi_{1,\xi}$ and it is isomorphic to $\Z_2\times\Z_4$. The subgroup of elements of order $2$ of $\aut{A}$ is generated by $\varphi_{-1,1}$ and $\varphi_{1,-1}$, so it is the same we considered in Subsection \ref{subsection:4p_i_cyclic}, and then the left braces with $|\ker{\lambda}|=2q$ are as in Lemma \ref{prop:4p_i_cyclic_2}.

On the other hand, we have three subgroups of $\aut{A}$ of order $4$: $\langle \varphi_{1,-1}, \varphi_{-1,1}\rangle$, $\langle \varphi_{1,\xi}\rangle$ and $\langle \varphi_{-1,\xi}\rangle$.

\begin{lemma}
	The left braces of size $4q$ and $|\ker{\lambda}|=q$ are $(B_i,+,\circ)$ for $i=1,2$ where
	\begin{align*}
		B_1: &\quad \begin{pmatrix}
			n \\ m  
		\end{pmatrix} +\begin{pmatrix}
			x \\ y 
		\end{pmatrix}=
		\begin{pmatrix}
			n+x \\ m+y 
		\end{pmatrix}, \qquad \begin{pmatrix} n \\ m \end{pmatrix} \circ \begin{pmatrix} x \\ y  \end{pmatrix}=\begin{pmatrix}
			n+(-1)^{\frac{m(m-1)}{2}}x \\
			m+(-1)^m y
		\end{pmatrix},\\
		B_2: & \quad \begin{pmatrix}
			n \\ m  
		\end{pmatrix} +\begin{pmatrix}
			x \\ y 
		\end{pmatrix}=
		\begin{pmatrix}
			n+x \\ m+y 
		\end{pmatrix}, \qquad \begin{pmatrix} n \\ m \end{pmatrix} \circ \begin{pmatrix} x \\ y  \end{pmatrix}=\begin{pmatrix}
			n+\xi^{m}x \\
			m+ y
		\end{pmatrix}
	\end{align*}
	for $0\leq n,x\leq q-1$ and $0\leq m,y\leq 3$. In particular $(B_2,+,\circ)$ is a bi-skew brace and we have
	$$(B_1,\circ)\cong \Z_2\times (\Z_q\rtimes_{-1}\Z_2) \qquad\text{and}\qquad (B_2,\circ)\cong \Z_q\rtimes_{\xi}\Z_4.$$
\end{lemma}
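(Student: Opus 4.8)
The plan is to run the paper's usual two-step strategy, classifying the regular subgroups $G\leq\Hol(A)$ with $|\pi_2(G)|=4$ according to which of the three order-$4$ subgroups of $\aut A$ equals $\pi_2(G)$, namely $\langle\varphi_{1,-1},\varphi_{-1,1}\rangle$, $\langle\varphi_{1,\xi}\rangle$ and $\langle\varphi_{-1,\xi}\rangle$. In every case $\ker{\pi_2|_G}$ has order $q$, hence equals $\langle\tau\rangle$, the unique subgroup of that order, so the standard presentation is forced to be $G=\langle\tau,\ \sigma^a\,\alpha\rangle$ for a single unknown power $\sigma^a$, where $\alpha$ is the relevant generator. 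By Remark \ref{remark for lambdas} the associated brace has $|\ker\lambda|=q$ with $\ker\lambda=\langle\tau\rangle$.

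First I would dispose of $\langle\varphi_{1,-1},\varphi_{-1,1}\rangle$: this is precisely the Klein four subgroup of $\aut A$ already analysed in Lemma \ref{lem:4p_i_cyclic_4}, and since that analysis never uses $\xi$ it applies verbatim, producing the brace $B_1$ with $(B_1,\circ)\cong\Z_2\times(\Z_3\rtimes_{-1}\Z_2)$ and the stated formula. Next I would rule out $\langle\varphi_{-1,\xi}\rangle$ altogether. Writing $G=\langle\tau,\ \sigma^a\varphi_{-1,\xi}\rangle$ and computing $(\sigma^a\varphi_{-1,\xi})^2$, one uses that $\varphi_{-1,\xi}$ sends $\sigma$ to $\sigma^{-1}$, so the two $\sigma$-contributions cancel and the square equals $\varphi_{1,-1}$, a nontrivial element of $1\times\aut A$. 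By Lemma \ref{regularity} no such $G$ is regular, for any value of $a$.

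The remaining subgroup $\langle\varphi_{1,\xi}\rangle$ yields $B_2$. Here $G=\langle\tau,\ \sigma^a\varphi_{1,\xi}\rangle$; since $\varphi_{1,\xi}$ fixes $\sigma$, the relation-lifting condition \nc is satisfied for every $a$, and regularity via Lemma \ref{regularity} forces $a$ to be odd, i.e. $a\in\{1,3\}$. Conjugating by $\varphi_{-1,1}$ replaces $a$ by $-a$ and identifies these two values, so up to conjugacy there is a unique regular subgroup $H=\langle\tau,\ \sigma\varphi_{1,\xi}\rangle$, whose regularity I would verify directly. To identify $(B_2,\circ)$ I would observe that $u=\sigma\varphi_{1,\xi}$ has order $4$ and that $u\tau u^{-1}=\tau^{\xi}$, which exhibits $G\cong\Z_q\rtimes_{\xi}\Z_4$; this group is non-isomorphic to the multiplicative group of $B_1$, so together the three cases give exactly two braces.

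Finally, for the multiplicative formula I would record that $\tau\in\ker\lambda$ while $\lambda_\sigma=\varphi_{1,\xi}$ satisfies $\lambda_\sigma(\sigma)=\sigma$; Lemma \ref{ker fix}(i) then gives $\lambda_{\sigma^m}=\varphi_{1,\xi}^{\,m}=\varphi_{1,\xi^m}$, and since $\tau^n\sigma^m=\tau^n\circ\sigma^m$ with $\tau^n\in\ker\lambda$ we get $\lambda_{\tau^n\sigma^m}=\varphi_{1,\xi^m}$, from which the stated formula $(n,m)\circ(x,y)=(n+\xi^m x,\ m+y)$ for $B_2$ follows. The bi-skew property is inherited from \cite[Corollary 1.2]{skew_pq} exactly as in Theorem \ref{prop:cyclic_ker=p2}, since $(A,+)=\Z_{4q}$ is cyclic and $(A,\circ)=\Z_q\rtimes_{\xi}\Z_4$. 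The single load-bearing computation is the cancellation $(\sigma^a\varphi_{-1,\xi})^2=\varphi_{1,-1}$: it is what distinguishes the two cyclic order-$4$ subgroups and explains why only one of them contributes a brace.
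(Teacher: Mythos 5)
Your proof is correct and follows essentially the same route as the paper: the same three-way case split on $\pi_2(G)\in\{\langle\varphi_{1,-1},\varphi_{-1,1}\rangle,\langle\varphi_{1,\xi}\rangle,\langle\varphi_{-1,\xi}\rangle\}$, the same appeal to Lemma \ref{lem:4p_i_cyclic_4} for $B_1$, the same killing computation $(\sigma^a\varphi_{-1,\xi})^2=\varphi_{1,-1}$ against Lemma \ref{regularity}, and the same use of Lemma \ref{ker fix} to derive the formula for $B_2$. You are in fact slightly more careful than the paper in the $\langle\varphi_{1,\xi}\rangle$ case (showing $a$ must be odd and conjugating by $\varphi_{-1,1}$ to normalize $a=1$), but this is a refinement of the same argument, not a different one.
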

\begin{proof}
	The groups
	\[
	G_1=\langle \tau,\ \sigma^2\varphi_{1,-1},\ \sigma\varphi_{-1,1}\rangle \cong \Z_2\times (\Z_q\rtimes_{-1}\Z_2) \qquad \text{and} \qquad G_2=\langle \tau,\ \sigma\varphi_{1,\xi} \rangle \cong \Z_q\rtimes_{\xi}\Z_4
	\]
	are regular and they are not conjugate, since they are not isomorphic.
	Let $G$ be a regular subgroup of $\Hol(A)$ with $|\pi_2(G)|=4$. The group $A$ has a unique subgroup of order $q$ which is generated by $\tau$. We have three cases. If $\pi_2(G)=\langle \varphi_{1,-1},\ \varphi_{-1,1}\rangle$, then Lemma \ref{lem:4p_i_cyclic_4} applies and we get the brace $B_1$. If $\pi_2(G)=\langle\varphi_{1,\xi}\rangle$, then a standard presentation for $G$ is $G_2$, the associate brace is $B_2$ and according to \cite[Proposition 1.1]{skew_pq} $B_2$ is a bi-skew brace. If $\pi_2(G)=\langle\varphi_{-1,\xi}\rangle$, then a standard presentation for $G$ is
	\[
	G=\langle \tau,\ \sigma\varphi_{-1,\xi} \rangle.
	\]
	Since $(\sigma\varphi_{-1,\xi})^{2}=\varphi_{-1,\xi}^2=\varphi_{1,-1}\in G$, then $G$ is not regular by Lemma \ref{regularity}.
\end{proof}

The following table summarizes the left braces we obtained in this subsection.

\begin{table}[H]
	\centering
	\begin{tabular}{c|c|c|c|c|c}
		$\ker{\lambda}$ & $\Z_{4q}$ & $\Z_2^2\times\Z_q$ & $\Z_2\times (\Z_q\rtimes_{-1}\Z_2)$ & $\Z_q\rtimes_{-1}\Z_4$ & $\Z_q\rtimes_{\xi}\Z_4$ \\
		\hline
		$q$ &- &- & $1$ &- & $1$ \\
		$2q$ &- & $1$ & $1$ & $1$ &- \\
		$4q$ & $1$ &- &- &- &-
	\end{tabular}
	\caption{Enumeration of left braces of cyclic type of size $4q$ with $q=1\pmod{4}$.}
	\label{table:cyclic_4p_ii}
\end{table}

\subsection{Left braces of $\Z_2^2\times\Z_q$-type.} In this section $A$ will denote the group $\Z_2^2\times\Z_q$. As in Subsection \ref{subsection:4p_i_abelian} we consider the subgroup of $\aut{A}\cong GL_2(2)\times \Z_q^\times$ containing the elements of order at most $4$, which is generated by $C$ and $\xi$. Such subgroup is isomorphic to $\Z_2\times\Z_4$ and it has three subgroups of order $4$.

If $G$ is a regular subgroup with $|\pi_2(G)|=2$, then Lemma \ref{lem:4p_i_abelian_2} applies.

\begin{lemma}
	The unique left brace of size $4q$ with $|\ker{\lambda}|=q$ is $(B,+,\circ)$ where
	$$ \begin{pmatrix}
	n \\ m \\l 
	\end{pmatrix} +\begin{pmatrix}
	x \\ y \\z
	\end{pmatrix}=
	\begin{pmatrix}
	n+x \\ m+y \\ l+z
	\end{pmatrix}, \qquad \begin{pmatrix} n \\ m \\ l \end{pmatrix} \circ \begin{pmatrix} x \\ y \\ z \end{pmatrix}=\begin{pmatrix}
	n+\xi^{2m+l} x \\ m+y+lz \\ l+z
	\end{pmatrix}$$
	for $0\leq n,x\leq q-1$ and $0\leq m,l,y,z\leq 1$. In particular, $(B,\circ)\cong \Z_q\rtimes_{\xi}\Z_4$.
\end{lemma}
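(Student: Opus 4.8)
The plan is to run the same two–step strategy as in Proposition~\ref{lem:4p_i_abelian_2}, classifying the regular subgroups $G\le\Hol(A)$ with $|\pi_2(G)|=4$; by Remark~\ref{remark for lambdas} these are exactly the ones producing braces with $|\ker\lambda|=q$. As recorded just before the statement, up to conjugacy the order-$4$ subgroups of $\aut{A}$ are $\langle\xi\rangle$, $\langle C\xi\rangle$ and the Klein four group $\langle C,\xi^2\rangle$, where $\xi$ denotes the automorphism acting as $\xi$ on $\langle\epsilon\rangle$ and trivially on $\langle\sigma,\tau\rangle$. In each case the kernel $\ker\pi_2|_G$ is forced to be the unique subgroup $\langle\epsilon\rangle$ of order $q$, so I would treat the three images in turn.

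First I would discard the two extremal images. If $\pi_2(G)=\langle\xi\rangle$, then $G=\langle\epsilon,\,u\xi\rangle$ with $u\in A$, and since $\xi$ fixes $\langle\sigma,\tau\rangle$ pointwise every element of $G$ has $\langle\sigma,\tau\rangle$-part inside the cyclic group generated by the image of $u$; hence $|\pi_1(G)|\le 2q$ and $G$ is not regular by Lemma~\ref{regularity}. If $\pi_2(G)=\langle C,\xi^2\rangle$, then $G=\langle\epsilon,\,u_1C,\,u_2\xi^2\rangle$, and condition \nc applied to $(u_1C)^2\in\langle\epsilon\rangle$ forces the $\tau$-component of $u_1$ to vanish. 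A short bookkeeping of the $\langle\sigma,\tau\rangle$-parts carried by the four cosets of $\langle\epsilon\rangle$ (those of $1,\,u_1C,\,u_2\xi^2,\,u_1Cu_2\xi^2$) then shows they cannot be four distinct elements of $\langle\sigma,\tau\rangle$, so $\pi_1(G)\neq A$ and again $G$ is not regular.

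The surviving image $\pi_2(G)=\langle C\xi\rangle$ is where the brace arises, and I expect this to be the delicate point. Writing $G=\langle\epsilon,\,u\,C\xi\rangle$ and multiplying the generator by a suitable power of $\epsilon\in G$ I may assume $u=\sigma^a\tau^b\in\langle\sigma,\tau\rangle$. Setting $w=u\,C\xi$, the computation $w^2=(u+Cu,\xi^2)=(\sigma^{b},\xi^2)$ shows that $w^2\in 1\times\aut{A}$ exactly when $b=0$; thus \emph{regularity holds precisely when $b\neq 0$}. This is the phenomenon special to $p=2$: the transvection $C$ has order $2$, so the telescoping sum $u+Cu=\sigma^{b}$ does not vanish, in contrast with the odd-prime situation of Subsection~\ref{subsection:q=1(p^2)_abelian_non-cyclic}, where a cyclic image of order $p^2$ is always forbidden. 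Conjugating by the normaliser $\langle C\rangle\times\Z_q^\times$ of $\langle C\xi\rangle$, which stabilises $\langle\epsilon\rangle$, I can normalise $b=1$ and clear $a$ (conjugation by $C$ sends $a\mapsto a+1$), so every such $G$ is conjugate to the single representative $H=\langle\epsilon,\,\tau C\xi\rangle$; this gives uniqueness.

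Finally I would identify the structure and read off $\circ$. In $H$ the element $w=\tau C\xi$ satisfies $w^4=1$ and $w\epsilon w^{-1}=\epsilon^{\xi}$ with $\langle\epsilon\rangle\cap\langle w\rangle=1$, whence $(B,\circ)\cong H\cong\Z_q\rtimes_\xi\Z_4$. For the explicit operation I use Remark~\ref{remark for lambdas}: $\lambda_a=(C\xi)^{j(a)}$, where $j(a)\in\Z_4$ is determined by $\pi_1(\epsilon^{k}w^{j})=a$. Computing the $\langle\sigma,\tau\rangle$-parts of $w^0,\dots,w^3$ produces a bijection $\Z_4\to\langle\sigma,\tau\rangle$ whose inverse expresses $j(a)$ through the two $\Z_2$-coordinates of $a$, and then $a\circ b=a+\lambda_a(b)$ yields the displayed formula. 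The main obstacle throughout is the trichotomy of the order-$4$ image: pinning down why $\langle C\xi\rangle$ alone supports a regular subgroup rests entirely on the order-$2$ behaviour of $C$ highlighted above, and this is precisely the point that separates the $q\equiv 1\pmod 4$ case from the previous one.
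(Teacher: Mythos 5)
Your proposal is correct and follows essentially the same route as the paper's proof: the same trichotomy of order-$4$ subgroups $\langle\xi\rangle$, $\langle C,\xi^2\rangle$, $\langle C\xi\rangle$ of $\aut{A}$, elimination of the first two by non-regularity, the same normalization $b=1$, $a=0$ (conjugating by $C$) to reach $H=\langle \epsilon,\ \tau C\xi\rangle$, and the same identification $(B,\circ)\cong\Z_q\rtimes_{\xi}\Z_4$. The only (harmless) variations are cosmetic: you rule out the discarded images by counting $\pi_1$-parts rather than exhibiting an element of $G\cap(1\times\aut{A})$ as the paper does, and you read off $\lambda$ directly from the bijection $\pi_1|_H$ of Remark \ref{remark for lambdas} instead of the paper's computation $\tau\circ\tau=\sigma$, $\lambda_\sigma=\lambda_\tau^2=\xi^2$ via Lemma \ref{ker fix}; both yield the same $\lambda_{a}=C^{l}\xi^{2m+l}$ and hence the same brace.
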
    

\begin{proof}
	If $G$ is a regular subgroup of $\Hol(A)$ with $|\pi_2(G)|=4$, then the kernel is generated by $\epsilon$. If $\pi_2(G)=\langle C, \xi^2\rangle$, then the same argument of Lemma \ref{lem:4p_i_abelian_4} applies and we get a contradiction. If $\pi_2(G)=\langle\xi\rangle$, then $G$ has the following standard presentation
	\[
	G=\langle \epsilon,\ \sigma^a\tau^b\xi \rangle.
	\]
	Then $(\sigma^a\tau^b\xi)^2=\xi^2\in G$ and so $G$ is not regular. 
	If $\pi_2(G)=\langle C\xi\rangle$, then $G$ has the following standard presentation
	\[
	G=\langle \epsilon,\ \sigma^a\tau^b C\xi \rangle\cong \Z_q\rtimes_{\xi}\Z_4.
	\]
	If $b=0$ then $(\sigma^a C\xi)^2=\xi ^2\in G$ and so $G$ is not regular. Then $b=1$ and, up to conjugation by $C$, we can assume that $a=0$. The group $G$ is regular. Hence, in the associated brace $B$, we have $\tau\circ \tau=\tau C(\tau)= \sigma$ and so $\lambda_{\sigma}=\lambda_{\tau}^2=\xi^2$. Then 
	$$\lambda_{\epsilon^n \sigma^m \tau^l}=\lambda_{\epsilon^n \circ \sigma^m\circ  \tau^l}=\lambda_{\sigma}^m \lambda_{\tau}^l$$
	since $l=0,1$ and the formula follows.
\end{proof}

The following table summarizes the left braces we obtained in this subsection.

\begin{table}[H]
	\centering
	\begin{tabular}{c|c|c|c|c|c}
		$\ker{\lambda}$ & $\Z_{4q}$ & $\Z_2^2\times\Z_q$ & $\Z_2\times (\Z_q\rtimes_{-1}\Z_2)$ & $\Z_q\rtimes_{-1}\Z_4$ & $\Z_q\rtimes_{\xi}\Z_4$ \\
		\hline
		$q$ &- &- &- &- & $1$ \\
		$2q$ & $1$ &- & $1$ & $1$ &- \\
		$4q$ &- & $1$ &- &- &-
	\end{tabular}
	\caption{Enumeration of left braces of $\Z_2^2\times\Z_q$-type of size $4q$ with $q=1\pmod{4}$.}
	\label{table:abelian_4p_ii}
\end{table}

\section{Left braces of size $p^2q$ with $p,q$ arithmetically independent}\label{section:alg_ind}

Let $p,q$ be primes. If none of the following congruences holds
\begin{equation*}\label{congruences}
	p=1\pmod{q},\quad p=-1\pmod{q},\quad q=1\pmod{p},    
\end{equation*}
then $p$ and $q$ are said to be \emph{arithmetically independent}. In such case, the only groups of size $p^2q$ are the abelian ones, $\Z_{p^2q}$ and $\Z_p^2\times \Z_q$, \cite[Proposition 21.17]{EnumerationGroups}. In both cases, the size of the kernel of $\lambda$ of any non-trivial brace is $pq$. Applying Theorems \ref{prop:cyclic_ker=pq} and \ref{sub_abelian_p} we have the following table:
\begin{table}[H]
	\centering
	\begin{tabular}{c|c|c}
		$+\backslash \circ$ & $\Z_{p^2q}$ & $\Z_p^2\times\Z_q$  \\
		\hline
		$\Z_{p^2q}$ & $2$ &- \\
		$\Z_p^2\times\Z_q$ &- & $2$
	\end{tabular}
	\caption{Enumeration of left braces of size $p^2q$ with $p$ and $q$ arithmetically independent.}
\end{table}

\section*{Acknowledgement}
This work was partially supported by UBACyT 20020171000256BA and PICT 2016-2481. The authors want to thank Manoj Yadav and Leandro Vendramin for their comments on an earlier version of this work.

\bibliographystyle{abbrv}
\bibliography{refs}
 
 \end{document}